\numberwithin{equation}{section}
\theoremstyle{plain}
\newtheorem{thm}{Theorem}[section]
\newtheorem{prop}[thm]{Proposition}
\newtheorem{cor}[thm]{Corollary}
\newtheorem{lemma}[thm]{Lemma}
\newtheorem{conjn}{Conjecture}
\newtheorem*{conj}{Conjecture}
\theoremstyle{definition}
\newtheorem{deff}[thm]{Definition}
\newtheorem{example}[thm]{Example}
\newtheorem{nota}[thm]{Notation}
\newtheorem{rmk}[thm]{\bf Remark}
\newcommand{\vc}{\operatorname{Vec}}
\newcommand{\fvc}{\operatorname{vec}}
\newcommand{\Rep}{\operatorname{Rep}}
\newcommand{\rr}{r_{\operatorname{ck}}}
\newcommand{\fRep}{\operatorname{rep}}
\newcommand{\ffRep}{\operatorname{rep^{\mathbb Z}}}
\def\a{\alpha}
\newcommand{\K}{\mathsf k}
\def \Z{\mathbb Z}
\def\-{\text{-}}
\newcommand{\Hom}{\operatorname{Hom}}
\newcommand{\gr}{\operatorname{gr}}
\newcommand{\M}{\mathbb M}
\newcommand\Gr[1][]{{\operatorname{{Gr}^{#1}-}}}
\newcommand\fGr[1][]{{\operatorname{{gr}^{#1}-}}}
\newcommand\Modd[1][]{{\operatorname{{Mod}^{#1}-}}}
\newcommand{\fModd}{\operatorname{{mod}-}}
\newcommand{\id}{\operatorname{id}}
\begin{document}

\title{Morita theory of finite representations of Leavitt path algebras}

\author{Wolfgang Bock}
\address{(Bock): 
Mathematics Department, Linnaeus Unversity, Universitetsplatsen 1, V\"axj\"o, 352 52, Sweden}
\email{wolfgang.bock@lnu.se}

\author{Roozbeh Hazrat}
\address{(Hazrat): Centre for Research in Mathematics and Data Science,  
Western Sydney University\\
Australia} \email{r.hazrat@westernsydney.edu.au}

\author{Alfilgen  Sebandal}
\address{(Sebandal):
Research Center for Theoretical Physics\\
Central Visayan Institute Foundation\\
Jagna, Bohol, Philippines
} \email{a.sebandal@rctpjagna.com}

\begin{abstract} 
The Graded Classification Conjecture states that for finite directed graphs $E$ and $F$, the associated Leavitt path algebras $L_\K(E)$ and $L_\K(F)$ are graded Morita equivalent, i.e., $\Gr L_\K(E) \approx_{\gr} \Gr L_\K(F)$,   if and only if, their graded Grothendieck groups are isomorphic $K_0^{\gr}(L_\K(E)) \cong K_0^{\gr}(L_\K(F))$ as order-preserving $\mathbb Z[x,x^{-1}]$-modules. Furthermore, if under this isomorphism, the class $[L_\K(E)]$ is sent to $[L_\K(F)]$ then the algebras are graded isomorphic, i.e., $L_\K(E) \cong _{\gr} L_\K(F)$.
 
 In this note we show that, for finite graphs $E$ and $F$ with no sinks and sources,  an order-preserving $\mathbb Z[x,x^{-1}]$-module isomorphism  $K_0^{\gr}(L_\K(E)) \cong K_0^{\gr}(L_\K(F))$ gives that the categories of locally finite dimensional graded modules of $L_\K(E)$ and $L_\K(F)$ are equivalent, i.e.,  $\fGr[\mathbb{Z}] L_\K(E)\approx_{\gr} \fGr[\mathbb{Z}]L_\K(F).$ We further obtain that the category of finite dimensional (graded) modules are equivalent, i.e., $\fModd L_\K(E) \approx \fModd L_\K(F)$ and $\fGr L_\K(E) \approx_{\gr} \fGr L_\K(F)$.

\end{abstract}

\maketitle




\section{Introduction}

Since the introduction of Leavitt path algebras about 20 years ago, the program of the complete classification of these algebras  have remained open and quite elusive. In parallel to the analytic version of the theory, i.e., graph $C^*$-algebras, one hopes that some forms of $K$-groups would furnish complete invariants for these algebras. 

Leavitt path algebras are naturally $\mathbb Z$-graded rings. For the Leavitt path algebra $L_\K(E)$, associated to a finite graph $E$ with coefficient in a field $\K$, set 
\[\mathcal V^{\gr}(L_\K(E)) :=\big \{ [P] \mid P  \text{ is a graded finitely generated projective right }   L_\K(E)\text{-module} \big \},  \]
where $[P]$ is the class of graded  $L_K(E)$-modules graded isomorphic to $P$. The set $\mathcal V^{\gr}(L_\K(E))$ becomes a commutative monoid with the direct sum addition $[P]+[Q]:=[P\oplus Q]$. Furthermore, there is an action of $\mathbb Z$ on this monoid, defined by ${}^n [P]=[P(n)]$, $n\in \mathbb Z$, where $P(n)$ is the graded module obtained by $P$ shifted by $n$. The group completion of this monoid is the graded Grothendieck group $K_0^{\gr}(L_\K(E))$, which with the induced action of $\mathbb Z$ can be considered as a $\mathbb Z[x,x^{-1}]$-module. It was proved that $\mathcal V^{\gr}(L_\K(E))$ is a cancellative monoid~\cite{arali} and thus can be considered as a  submonoid of $K_0^{\gr}(L_\K(F))$. 
For finite graphs $E$ and $F$, a $\mathbb Z[x,x^{-1}]$-module homomorphism  $\phi: K_0^{\gr}(L_\K(E)) \rightarrow K_0^{\gr}(L_\K(F))$, sending $\mathcal V^{\gr}(L_\K(E))$ to $\mathcal V^{\gr}(L_\K(F))$ is called an order-preserving homomorphism. 

The Graded Classification Conjecture~(\cite{mathann, hazdyn},  \cite[\S 7.3.4]{lpabook}) states that for finite directed graphs $E$ and $F$, the associated Leavitt path algebras are graded Morita equivalent, 
 if and only if, their graded Grothendieck groups are isomorphic, $K_0^{\gr}(L_\K(E))\cong K_0^{\gr}(L_\K(E))$,  as order-preserving $\mathbb Z[x,x^{-1}]$-modules.  Furthermore, if under this isomorphism, the class $[L_\K(E)]$ is sent to $[L_\K(F)]$ then the algebras are graded isomorphic, i.e., $L_\K(E) \cong _{\gr} L_\K(F)$.
 We refer the reader to \cite{abconj, arapardo,guidohom,guidowillie,bilich, carlsen,eilers,vas} for works on the graded classification conjecture and \cite{willie} for a comprehensive survey. 
 
 Using the deep work of George Bergman on realisation of conical monoids~\cite{bergman74}, the monoid $\mathcal V^{\gr}(L_\K(E))$ can be described directly from the graph $E$. Namely, there is a $\mathbb Z$-monoid isomorphism 
 $\mathcal V^{\gr}(L_\K(E))\cong T_E$, where 
 \begin{equation*}
T_E= \Big \langle \, v(i), v \in E^0, i \in \mathbb Z  \, \,  \Big \vert \, \, v(i)= \sum_{v\rightarrow u\in E^1} u(i+1) \ \Big \rangle. 
\end{equation*}
Here $E^0$ is the set of vertices,  $E^1$ the set of edges of the graph $E$, and the relations in the monoid are defined for those vertices that emit edges.  The action of $\mathbb Z$ on $T_E$ is defined by ${}^n v(i)=v(i+n)$, where $n\in \mathbb Z$.  Under this isomorphism, the class $[L_\K(E)]$ is sent to $1_E:= \sum_{v\in E^0} v$.

The so-called \emph{talented monoid} $T_E$ of the graph $E$, was introduced in \cite{hazli}. It is believed (and investigated in \cite{hazli}) that there is a beautiful and close relation between the geometry of a graph $E$ and the monoid structure of the talented monoid $T_E$ parallel to the correspondence between the algebraic structure of $L_\K(E)$ and the geometry of the graph $E$~\cite{lpabook}. We refer the reader to \cite{Luiz,lia,alfi, alfi2} for further work on the talented monoids.

The graded classification theorem can then be restated in the language of the monoids.

\begin{conj}[Graded Classification Conjecture]\label{conj1}
Let $E$ and $F$ be finite graphs and $T_E$ and $T_F$ their associated talented monoids, respectively. 

\begin{enumerate}[\upshape(1)]

\item There is a $\mathbb Z$-monoid isomorphism $\phi: T_{E} \rightarrow  T_{F}$, if and only if, $L_\K(E)$ is graded Morita equivalent to $L_\K(F)$, i.e., $\Gr L_\K(E) \approx_{\gr} \Gr L_\K(F)$.

\item There is a $\mathbb Z$-monoid isomorphism $\phi: T_{E} \rightarrow  T_{F}$, with $\phi(1_E)=1_F$, if and only if, $L_\K(E)$ is graded isomorphic to $L_\K(F)$ as $\K$-algebras. 

\end{enumerate}
\end{conj}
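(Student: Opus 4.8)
The plan is to prove each part as a biconditional, splitting off the two directions and placing essentially all of the difficulty in the implication ``$\Z$-monoid isomorphism $\Rightarrow$ graded equivalence.'' For the forward direction, suppose $\Gr L_\K(E)\approx_{\gr}\Gr L_\K(F)$. A graded equivalence restricts to an equivalence between the subcategories of graded finitely generated projective modules and commutes, up to natural isomorphism, with the shift functors $(n)$ that encode the $\Z$-action; it therefore induces a $\Z$-monoid isomorphism $\mathcal{V}^{\gr}(L_\K(E))\to\mathcal{V}^{\gr}(L_\K(F))$, which through the identification $\mathcal{V}^{\gr}(L_\K(E))\cong T_E$ recalled above yields the desired $\phi\colon T_E\to T_F$. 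In part (2) a graded algebra isomorphism sends the regular module to the regular module, so $\phi([L_\K(E)])=[L_\K(F)]$, i.e. $\phi(1_E)=1_F$.

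For the reverse direction I would pass to the $\Z$-covering graphs $\overline E$ and $\overline F$. Using the smash-product description of graded modules, one has an equivalence $\Gr L_\K(E)\approx\Modd L_\K(\overline E)$ under which the shift functor corresponds to a canonical shift automorphism $\sigma_E$ of $L_\K(\overline E)$. The covering graph $\overline E$ is acyclic and row-finite, so $L_\K(\overline E)$ is ultramatricial; moreover this equivalence identifies $T_E$ with the positive cone $K_0^+(L_\K(\overline E))$ in such a way that the $\Z$-action on $T_E$ becomes the automorphism of $K_0$ induced by $\sigma_E$. Group-completing $\phi$ then produces an order isomorphism $K_0(L_\K(\overline E))\to K_0(L_\K(\overline F))$ that intertwines the automorphisms induced by $\sigma_E$ and $\sigma_F$, i.e. an isomorphism of ordered $\Z[\Z]$-modules, which preserves the order-unit in the pointed case of part (2).

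It remains to realise this $K_0$-datum by an algebra isomorphism $\Psi\colon L_\K(\overline E)\to L_\K(\overline F)$ with $\Psi\sigma_E=\sigma_F\Psi$. This is an \emph{equivariant} form of the Bratteli--Elliott classification of ultramatricial algebras: the classical theorem realises the order isomorphism of $K_0$ by an intertwining of Bratteli diagrams, and the task is to perform that intertwining compatibly with the two shift actions so that $\Psi$ commutes with $\sigma_E$ and $\sigma_F$ exactly. Given such an equivariant $\Psi$, transporting it back through the equivalence of the previous paragraph yields $\Gr L_\K(E)\approx_{\gr}\Gr L_\K(F)$; and when $\Psi$ also matches the order-units, the matched units force this to be implemented by a graded algebra isomorphism $L_\K(E)\cong_{\gr}L_\K(F)$, giving part (2).

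The hard part is the equivariant classification of the previous paragraph, and it is precisely where the conjecture concentrates its difficulty. The standard direct-limit intertwining only produces finite-stage matricial identifications that agree with the prescribed $K_0$-map up to conjugation, and forcing these stagewise maps to commute with the shift automorphisms $\sigma_E,\sigma_F$ is not dictated by $K_0$-data alone. Securing this equivariance uniformly along the colimit --- presumably exploiting acyclicity of the covering graphs, and in the sharpest currently accessible cases the absence of sinks and sources, to pin down the identifications level by level --- is the genuine content of the problem, and is where I expect the main effort, and any additional hypotheses, to be required.
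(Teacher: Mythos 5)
The statement you were asked to prove is labelled a \emph{Conjecture} in the paper, and the paper contains no proof of it: the authors explicitly present it as open and instead establish the strictly weaker Theorem~\ref{mainthfer}, namely that a $\Z$-monoid isomorphism $T_E\cong T_F$ yields equivalences $\fModd L_\K(E)\approx\fModd L_\K(F)$, $\fGr L_\K(E)\approx_{\gr}\fGr L_\K(F)$ and $\fGr[\mathbb{Z}]L_\K(E)\approx_{\gr}\fGr[\mathbb{Z}]L_\K(F)$, proved via the correspondence (Theorem~\ref{talmax}) between $\Z$-order-ideals of $T_E$ and maximal cycles/sinks, together with Green's identification of (graded) module categories with representations of the (covering of the) double graph. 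Your proposal does not prove the conjecture either, and to your credit it says so: the easy direction (a graded Morita equivalence commutes with shifts, hence induces a $\Z$-monoid isomorphism on $\mathcal V^{\gr}\cong T_E$, pointed in the isomorphism case) is correct and standard, and your reduction of the hard direction — pass to $L_\K(\overline E)\cong L_\K(E)\#\Z$ via smash products, identify $T_E$ with the ordered $K_0$ of this ultramatricial algebra with the $\Z$-action induced by the shift automorphism $\sigma_E$, and seek an Elliott-type intertwining that is exactly equivariant — is precisely the known strategy going back to the papers \cite{mathann,arapardo} where the conjecture originates. But the genuine gap is the step you flag yourself: the classical Bratteli--Elliott intertwining only realises the $K_0$-isomorphism up to inner perturbations at each finite stage, and nothing in the $K_0$-data forces these stagewise identifications to commute with $\sigma_E$ and $\sigma_F$ on the nose; making the intertwining equivariant \emph{is} the open problem, not a technical refinement of it. A proposal whose key lemma is the unresolved conjecture restated in equivalent dynamical form (it is essentially the shift-equivalence versus strong-shift-equivalence issue, cf.~\cite{abconj,willie}) is a correct reduction but not a proof.

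Two secondary points deserve care even at the level of the sketch. First, $L_\K(\overline E)$ is non-unital (the covering graph has infinitely many vertices), so you need the local-unit version of ultramatricial classification, with $K_0$ equipped with its scale rather than an order unit. Second, your final claim in part (2) — that matching order units ``forces'' the equivariant isomorphism of covering algebras to be implemented by a graded algebra isomorphism $L_\K(E)\cong_{\gr}L_\K(F)$ — is itself nontrivial: recovering the graded algebra from the pair $(L_\K(\overline E),\sigma_E)$ plus pointedness requires an argument (a graded corner/Cohen--Montgomery duality step), and in the literature even the passage from pointed $K_0^{\gr}$-data to graded isomorphism is established only in special cases. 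If you want a route that actually closes a statement, the paper's own strategy is instructive: it retreats from $\Gr L_\K(E)$ to the locally finite subcategories, where the representation theory is rigid enough (Propositions~\ref{firstmanin}, \ref{firstmanin2}, Theorems~\ref{theorem7b}, \ref{theoremy1}) to be governed entirely by the maximal cycles and sinks that $T_E$ detects.
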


In this paper, in order to approach the conjectures, instead of category of graded modules $\Gr L_\K(E)$, we concentrate on the category of locally finite graded modules $\fGr[\mathbb{Z}] L_\K(E)$ and also the category of finite dimensional (graded) modules $\fModd L_\K(E)$ and $\fGr L_\K(E)$, i.e., the $L_\K(E)$-modules which are finite dimensional as $\K$-vector spaces.  The main result of the paper shows that if there is a $\mathbb Z$-monoid isomorphism $T_E\cong T_F$ between the talented monoids of the finite graphs with no sources $E$ and $F$, then there is an equivalence of categories  $\fModd L_\K(E)\approx \fModd L_\K(F)$ as well as the graded equivalence of categories $\fGr L_\K(E)\approx_{\gr} \fGr L_\K(F)$. If in addition the graph has no sinks, then $\fGr[\mathbb{Z}] L_\K(E)\approx_{\gr} \fGr[\mathbb{Z}] L_\K(F)$. These are equivalences that commute with auto-equivalence of shifts. The hope is to be able to lift this graded equivalence to $\Gr L_\K(E)\approx_{\gr} \Gr L_\K(F)$ and thus answer the Conjecture above in positive.

The motivation for our investigation is the papers of Ko\c{c} and  \"{O}zayd\i n \cite{koc1,koc2} who characterised finite dimensional representations of the Leavitt path algebras of row-finite graphs. They showed that these representations arise from the so-called maximal cycles and maximal sinks in a graph (see \S\ref{graphsec}). They identified the category of modules over $L_\K(E)$ with a certain full subcategory of (the classical) representations of the graph $E$. The graph monoid $M_E$ is then used to detect when an algebra has finite representation. 

Our starting point, however, is  Green's realisations of the category of modules over a path algebra $P_\K(E)$ with a certain set of relations $r$, i.e., $\Modd P_\K(E)/\langle r\rangle $,  as the category $\Rep(E,r)$ of representations of the graph $E$ with corresponding relations $r$. Similarly,   we shall be using the identification of the category of graded modules of $P_\K(E)/\langle r\rangle $ with the  category $\Rep(\overline E,\overline r)$ of representations of the \emph{covering} graph $\overline E$ with corresponding relations $\overline r$  (see \cite{green}).

We start by carefully establishing the equivalences  
\begin{equation}\label{gfhdks}
\Modd P_\K(E)/\langle r\rangle \cong \Rep(E,r) \quad \text{ and } \quad 
\Gr P_\K(E)/\langle r\rangle \cong \Rep(\overline E,\overline r).
\end{equation} This allows us to construct, for any $k\in \mathbb Z$, an auto-equivalence functor $\mathcal T_k: \Rep(\overline E,\overline r)\rightarrow \Rep(\overline E,\overline r)$ which commutes with the shift functors on  $\Gr P_\K(E)/\langle r\rangle$. This is done in order to show that the categorical equivalences we obtain for the graded modules are ``graded'' equivalences.  These types of categorical equivalences are important ingredients in the setting of combinatorial algebras which relate these algebras with the dynamical invariants of the graphs and thus symbolic dynamics (see~\cite{willie,hazdyn}).

Next we specialise (\ref{gfhdks}) to the case of Leavitt path algebras by choosing the graph to be $\hat E$, the double graph of $E$, and relations $r$ to be the Cuntz-Krieger relations (Definition~\ref{llkas} (3) and (4)) denoted by $\rr$ and thus obtaining  $\Modd L_\K(E) \cong \Rep(\hat E,\rr)$ and 
$\Gr L_\K(E) \cong \Rep(\overline {\hat E},\overline \rr)$. Translating the finite representations of $L_K(E)$ into the finite representations of the graphs, 
we will then see that the components of the graphs that give finite representations are exactly the maximal cycles and maximal sinks in the graph. We then show that if the talented monoids $T_E$ and $T_F$ are $\mathbb Z$-isomorphic, then there is a one-to-one correspondence between maximal cycles and maximal sinks of the graphs which preserves the length of the cycles. Consequently, we can establish the equivalence of the categories of finite and locally-finite (dimensional) representations of graphs, i.e., $\fRep(\hat E,\rr)\cong \fRep(\hat F,\rr)$ and 
$\fRep(\overline {\hat E}, \overline \rr)\cong \fRep(\overline {\hat F},\overline \rr)$ by a careful analysis. Combining with the equivalences (\ref{gfhdks}) in the case of Leavitt path algebras, we obtain our main results that a $\mathbb Z$-monoid isomorphism $T_E\cong T_F$ gives an equivalence of categories in Theorem~\ref{mainthfer}. 
 

Here again the ``talent'' of the monoid $T_E$ signifies itself. Namely  consider the graphs 
\begin{equation*}
{\def\labelstyle{\displaystyle}
E :   \xymatrix{
 \bullet \ar[r] & \bullet 
}}
\qquad \quad \quad \quad
{\def\labelstyle{\displaystyle}
F :   
\xymatrix{
 \bullet \ar[r] & \bullet \ar@(ur,rd)\\
}} 
\end{equation*}
Then there is an isomorphism of graph monoids $M_E\cong M_F$. However the category of finite dimensional (graded) modules of $L_\K(E)$ and $L_\K(F)$ are not equivalent (see also Example~\ref{whenmy}).  

The results of the paper can be possibly expanded in different directions, such as row-finite graphs, graphs with infinite emitters as well as for weighted Leavitt path algebras. However we decided to restrict ourselves to finite graphs as the proofs are more transparent and the original graded classification conjecture stated and still open for this case. Throughout the paper the set of natural numbers $\mathbb N$ contains zero.

\section{preliminary}

\subsection{Graphs}\label{graphsec}
Suppose that $E$ is a {\it directed graph} (which we shall simply call a \emph{graph}), i.e. a quadruple $E=(E^{0}, E^{1}, r, s)$, where $E^{0}$ and $E^{1}$ are
sets and $r,s$ are maps from $E^1$ to $E^0$. The elements of $E^0$ are called \textit{vertices} and the elements of $E^1$ \textit{edges}. We may think of an edge $e\in E^1$ as an arrow from $s(e)$ to $r(e)$. 
We assume that $E$ is \emph{row-finite}, i.e. for each vertex $v\in E^0$ there are at most finitely many edges in $s^{-1}(v)$. 

Let $v\in E^0$ and let $c$ and $d$ the cardinal of the set $s^{-1}(v)$ of edges it \emph{emits}, and the set $r^{-1}(v)$ of edges it \emph{receives}, respectively. We say that $v$ is a \emph{sink} if $c=0$,  and  regular otherwise. We denote the set of sinks in $E$ by $\textnormal{Sink}(E)$. We say that $v$ is a \emph{source} if $d=0$. A vertex which is both a sink and a source is called an \emph{isolated vertex}.

We use the convention that a (finite) path $p$ in $E$ is
a sequence $p=\a_{1}\a_{2}\cdots \a_{n}$ of edges $\a_{i}$ in $E$ such that
$r(\a_{i})=s(\a_{i+1})$ for $1\leq i\leq n-1$. We define $s(p) = s(\a_{1})$, and $r(p) =
r(\a_{n})$ and denote the length of the path, the number of edges composing $p$, by $|p|$. We denote set of vertices in the path $p$ by $p^0$ and the set of all paths in $E$ by $\textnormal{Path}(E)$. 
We say that a vertex $u$ \emph{connects to a vertex} $v$, and  write $u\ge v$, if there is a path $\alpha$ with $s(\alpha)=u$ and $r(\alpha)=v$. We say that $u$ \emph{connects to a path} $\alpha$ if there exists $v\in\alpha^0$ such that $u$ connects to $v$.
A \emph{closed path based} at a vertex $v\in E^0$ is a path $\alpha$ with $|\alpha|\ge 1$ such that $v=s(\alpha)=r(\a)$. The closed path $\alpha$ based at $v$ is said to be $simple$ if there is only one edge in $\alpha$ whose source is $v$. Denote by  $\textnormal{CSP}(v)$ the set of all such paths. Let $\text{CSP}(E)_{>1}=\{v\in E^0 \mid |\textnormal{CSP}(v)|>1 \}$. A \emph{cycle} is a closed path such that $s(e_i) \neq s(e_j)$ for every $i \neq j$. A \emph{loop} is a cycle of length one. We say a graph is \emph{acyclic} if it has no cycles. 

We define the \emph{tree of a vertex} $v\in E^0$ as 
\[T(v)=\{ w \in E^0 \mid v \geq w \}\] and for a subset $S\subseteq E^0$ as 
\[T(S)=\{ w \in E^0 \mid v \geq w, \text{ for some } v\in S\}.\]

One can define a pre-ordering on the set of sinks and cycles of a graph $E$.  We say that a cycle $C$ \emph{connects} to a sink $z$  if $z\in T(C)$, i.e., if there is a path from $C$ to $z$.  Similarly a cycle $C$ \emph{connects} to a cycle $D$, if $T(C) \cap D^0 \not = \varnothing$, i.e., there is path from $C$ to $D$.  This defines a pre-ordering on the set of cycles and sinks. Furthermore, this pre-ordering is a partial order if and only if the cycles in $E$ are mutually disjoint.  With this pre-ordering, a cycle $C$ is maximal if no other cycle connects to $C$ (in particular, a maximal cycle is disjoint from all other cycles). A sink $z$ is maximal if there is no cycle $C$ which connects to $z$.
We denote by $C(E)$ the set of vertices in $E$ lying in some maximal cycle, that is,   $C(E)=\{v\in E^0\mid v\in C^0, \textnormal{~for~some~maximal~cycle~} C\} $.

\subsection{Monoids}

Recall that a {\it monoid} is a semigroup with an identity element. Throughout this paper monoids are commutative, written additively, with the identity element denoted by $0$.  A monoid homomorphism $\phi:M\rightarrow N$ is a map between monoids $M$ and $N$ which respects the structures and $\phi(0)=0$. Every commutative monoid $M$ is equipped with a natural pre-ordering: $n \leq m$ if $n+p=m$ for some $p\in M$. We write $n < m$ if $m=n+p$ for some $p \not = 0$.

For a graph $E$, the \emph{graph monoid} $M_E$ is defined as a free commutative monoid over the vertices of $E$,  subject to identifying  vertices emitting edges with the sum of vertices they are connected to:
\begin{equation*}
M_E= \Big \langle \, v \in E^0 \, \, \Big \vert \, \,  v= \sum_{v\rightarrow u \in E^0} u \, \Big \rangle. 
\end{equation*}

The graph monoids were introduced by Ara, Moreno and Pardo~(\cite{ara2006}, \cite[\S 6]{lpabook}) in relation with the theory of Leavitt path algebras. It was shown that the group completion of $M_E$ is the Grothendieck group $K_0(L_\K(E))$, where $L_\K(E)$ is the Leavitt path algebra with coefficient in a field $\K$, associated to $E$.

Let $M$ be a commutative monoid with an abelian group $\Gamma$ acting on it via monoid automorphisms, i.e., a \emph{$\Gamma$-monoid}.  For $\alpha \in \Gamma$ and $a\in M$, we denote the action of $\alpha$ on $a$ by ${}^\alpha a$. 
A monoid homomorphism $\phi:M \rightarrow N$ is called $\Gamma$-\emph{monoid homomorphism} if $\phi$ respects the action of $\Gamma$, i.e., $
\phi({}^\alpha a)={}^\alpha \phi(a)$.

A $\Gamma$-\emph{order-ideal} of a $\Gamma$-monoid $M$ is a  subset $I$ of $M$ such that for any $\alpha,\beta \in \Gamma$, ${}^\alpha a+{}^\beta b \in I$ if and only if 
$a,b \in I$. Equivalently, a $\Gamma$-order-ideal is a submonoid $I$ of $M$ which is closed under the action of $\Gamma$ and it  is
\emph{hereditary} in the sense that $a \le b$ and $b \in I$ imply $a \in I$. The set $\mathcal L(M)$ of $\Gamma$-order-ideals of $M$ forms a (complete) lattice. We say $M$ is a \emph{simple} 
$\Gamma$-\emph{monoid} if the only $\Gamma$-order-ideals of $M$ are $0$ and $M$.

In this paper we will be working with the so-called talented monoid $T_E$, associated to a graph $E$,  which can be considered as the ``time evolution model'' of the graph monoid $M_E$.  The notion of talented monoids allows us to write the classification conjectures for Leavitt and $C^*$-algebras in a unified manner (see Conjecture~\ref{conji1}).  The positive cone of the graded Grothendieck group of a Leavitt path algebra $L_\K(E)$ can
be described purely based on the underlying graph, via the talented monoid $T_E$ of $E$~(Section~\ref{lpat} and \cite{hazli}). The benefit of talented monoids is that they give us control over elements of the monoids (such as minimal elements, atoms, etc.)
and consequently on the ``geometry'' of the graphs such as the number of cycles, their lengths, etc. (\cite{hazli,Luiz}).

\begin{deff}\label{talentedmon}
Let $E$ be a row-finite graph. The \emph{talented monoid} of $E$, denoted $T_E$, is the commutative 
monoid generated by $\{v(i) \mid v\in E^0, i\in \mathbb Z\}$, subject to
\[v(i)=\sum_{e\in s^{-1}(v)}r(e)(i+1)\]
for every $i \in \mathbb Z$ and every regular vertex $v\in E^{0}$. The additive group $\mathbb{Z}$ of integers acts on $T_E$ via monoid automorphisms by shifting indices: For each $n,i\in\mathbb{Z}$ and $v\in E^0$, define ${}^n v(i)=v(i+n)$, which extends to an action of $\mathbb{Z}$ on $T_E$. Throughout we will denote elements $v(0)$ in $T_E$ by $v$. When the graph $E$ is finite, we denote $1_E:=\sum_{v\in E^0} v \in T_E$. We say a monoid isomorphism $\phi: T_E \rightarrow T_F$ between the graphs $E$ and $F$ is a $\mathbb Z$-isomorphism, if $\phi$ respects the $\mathbb Z$-action. Furthermore, $\phi$ is called  \emph{pointed} if $\phi(1_E)=1_F$. 
\end{deff}

The crucial ingredient for us is the action of $\mathbb Z$ on the monoid $T_E$. The general idea is that the monoid structure of $T_E$ along with the action of $\mathbb Z$ resemble the graded ring structure of a Leavitt path algebra $L_{\mathsf k}(E)$.
Thus Conjecture~\ref{conji1} mentioned in the introduction roughly state that isomorphism of talented monoids can lift to the isomorphisms of graded and equivariant $K$-theory of respected Leavitt and graph $C^*$-algebras.

\begin{deff}\label{defcuc}
The monoid $T=\bigoplus_{i=1}^k \mathbb N$ with the action of $\mathbb Z$ defined by ${}^1(a_1,\dots,a_{k-1},a_k)=(a_k,a_1\dots, a_{k-1})$ is called the $\mathbb Z$-\emph{cyclic monoid of period} $k$. The monoid $T=\bigoplus_{\infty} \mathbb N$, with the action of $\mathbb Z$
defined by ${}^1(a_i)_{i\in \Z}= (b _i)_{i\in \Z}$, where $b_i=a_{i+1}$, (i.e., shifting the elements to the left) is called  the \emph{infinite $\mathbb Z$-cyclic} monoid. 
\end{deff}

Recall that a graph is called \emph{acyclic} if it has no cycle. A \emph{comet} graph is a graph which consists of only one cycle (and all other vertices connects to this unique cycle)~\cite{lpabook}. 

\begin{example}\label{cometex}
(1) Let $E$ be a finite graph. Then $T_E$ is a $\mathbb{Z}$-cyclic monoid of period $k$, i.e., $T_E \cong \bigoplus_{i=1}^k \mathbb N$ as $\mathbb Z$-monoids 
 if and only if $E$ is a comet graph with the unique cycle of length $k$.

(2) Let $E$ be a finite graph. Then $T_E$ is an infinite $\mathbb{Z}$-cyclic monoid 
 if and only if $E$ is an acyclic graph with a unique sink.  
 
\end{example}
\begin{proof} (1)
     Let $E$ be a finite graph such that $T_E=\bigoplus_{i=1}^k\mathbb{N}$ is a $\mathbb{Z}$-cyclic monoid. First we show $T_E$ is simple $\mathbb{Z}$-monoid. Suppose $0 \not = I \subseteq T_E$ is a $\mathbb{Z}$-order-ideal. Then there is $a=(a_0,\dots,a_{k-1})\in I$, where $a_i \not = 0$, for some $0\leq i \leq k-1$. Since $e_i \leq a$, where $e_i$ is a vector where all entries are zero except $1$ in the $i$-th component and $I$ is a $\mathbb{Z}$-order-ideal, then $e_i\in I$. Since $I$ is closed under the action of $\mathbb Z$, then for any $j\in \mathbb{Z}$,  ${}^je_i=e_{i+j \mod k} \in I.$ This implies $I=T_E$ and $T_E$ is a simple $\mathbb{Z}$-monoid. 
     
     On the other hand, for all $x\in T_E$, we have $^kx=x$. By \cite[Proposition 4.2(i)]{hazli}, $E$ has a cycle of length $k$ with no exit. If $E$ has another cycle, then this cycle has to have no exits (again by \cite[Proposition 4.2(ii)]{hazli}). The hereditary and saturated subsets generated by these two cycles give two distinct $\mathbb{Z}$-order-ideals in $T_E$ which is a contradiction
       with the fact that $T_E$ is simple. 
     Hence, $E$ has one cycle which has no exits. If $E$ has a sink $x$, then $^mx\neq x$ for all $m\in \mathbb Z$, a contradiction. Thus, every vertex in $E$ is connected to a unique cycle which implies that $E$ is a comet graph. 

     Conversely, suppose $E$ is a comet graph with a unique cycle of length $k$. Since source removal would preserve the talented monoid (\cite[Proposition 4.4]{Luiz}), we may assume that $E$ is a cycle. Let $E^0=\{v_0,v_1,\cdots , v_{k-1}\}$ with $r(s^{-1}(v_i))=\{v_{i+1}\}$ for $i<k-1$ and $r(s^{-1}(v_{k-1}))=\{v_{0}\}$. Then we have
\begin{eqnarray*}
    T_E &=& \left \langle v_i(j)\mid 0\leq i\leq k-1, j\in \mathbb{Z}\, \Bigl\vert \, 
    v_i(j)= \begin{cases}
  v_{i+1}(j+1), & 0\leq i< k-1\\
  v_0(j+1), & i=k-1
\end{cases}
    \right \rangle.
\end{eqnarray*}

Now it is easy to see $T_E \cong \bigoplus_{i=1}^k\mathbb{N}$ as $\mathbb Z$-monoids.

\noindent (2) Suppose $T_E= \bigoplus_\infty \mathbb N$, as in Definition~\ref{defcuc}. A similar argument as in part (1) shows that $T_E$ is a simple $\mathbb Z$-monoid. Observe that $^nx$ and $x$ are not comparable for any $n\in \mathbb{Z}$ and $x\in T_E$. Hence, by \cite[Proposition 4.2]{hazli},  $T_E$ has no cycles which also implies that $E$ also has a sink, as $E$ is finite. If $E$ has more than one sinks, then the  hereditary and saturated subsets generated by these two sinks give two distinct $\mathbb{Z}$-order-ideals in $T_E$ which is a contradiction
       with the fact that $T_E$ is simple.
 Thus, $E$ has a unique sink.

Conversely, suppose $E$ is acyclic with a unique sink. Again, we may assume that $E$ is just one isolated vertex $v$ by source removal. Hence, $T_E=\langle v(i) \mid i\in \mathbb Z\rangle$. Define $\varphi : T_E\rightarrow \bigoplus_\infty \mathbb N$ by $v(i)\mapsto e_i$ where $e_i$ has $1$ in the $i$th entry and $0$ elsewhere. Now, $\varphi(^nv(i))=\varphi (v(i+n))=e_{i+n}={}^ne_i={}^n\varphi (v(i))$ and $\varphi$ is clearly a bijective $\mathbb Z$-homomorphism. 
\end{proof}

The following Lemma is needed in the next section to determine the (locally) finite representations of the Leavitt path algebras. 

\begin{thm}\label{talmax}
    Let $E$ be a finite graph and $T_E$ its associated talented monoid.  
    \begin{enumerate}[\upshape(1)]
\item  There is a one-to-one correspondence between maximal cycles of $E$ and $\mathbb Z$-order-ideals $I$ of $T_E$ with $T_E/I$  a $\mathbb Z$-cyclic monoid of finite period. Under this correspondence,  the length of the cycle is equal to the period of the corresponding monoid $T_E/I$.

\item  There is a one-to-one correspondence between maximal sinks of $E$ and $\mathbb Z$-order-ideals $I$ of $T_E$ with $T_E/I$ an infinite $\mathbb Z$-cyclic monoid.

\end{enumerate}

       \end{thm}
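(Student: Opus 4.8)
My plan is to reduce both statements to the comet and acyclic characterisations of Example~\ref{cometex}, applied to a quotient graph. The key tool is the lattice isomorphism between the $\mathbb Z$-order-ideals of $T_E$ and the hereditary saturated subsets $H\subseteq E^0$, under which $H$ corresponds to $I_H=\langle v(i) : v\in H,\ i\in\mathbb Z\rangle$ and the quotient satisfies $T_E/I_H\cong T_{E/H}$ as $\mathbb Z$-monoids, where $E/H$ is the full subgraph of $E$ on the vertex set $E^0\setminus H$ (this is the talented-monoid analogue of the Ara--Moreno--Pardo correspondence; see \cite{ara2006,hazli}). Since $E$ is finite, so is $E/H$, and therefore, by Example~\ref{cometex}, an order-ideal $I=I_H$ has $T_E/I$ a $\mathbb Z$-cyclic monoid of finite period $k$ \emph{exactly} when $E/H$ is a comet with unique cycle of length $k$, and $T_E/I$ an infinite $\mathbb Z$-cyclic monoid \emph{exactly} when $E/H$ is acyclic with a unique sink. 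Thus the entire content is to match these two shapes of $E/H$ with the maximal cycles, respectively maximal sinks, of $E$.

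For part (1), given a maximal cycle $C$ of length $k$, I would set $H_C=E^0\setminus\{v\in E^0 : v \text{ connects to } C\}$. First I would check that $H_C$ is hereditary (immediate from transitivity of $\ge$) and saturated (a regular vertex connecting to $C$ must emit an edge whose range still connects to $C$, so its full range set cannot lie in $H_C$). Then $E/H_C$ is the full subgraph on the vertices connecting to $C$, and I claim it is a comet with unique cycle $C$: every retained vertex connects to $C$ by construction; there is no sink, since a sink connecting to $C$ would have to lie on $C$; and any cycle $D$ in $E/H_C$ connects to $C$, so maximality of $C$ (equivalently, its disjointness from all other cycles) forces $D=C$. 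Example~\ref{cometex}(1) then gives that $T_E/I_{H_C}\cong T_{E/H_C}$ is $\mathbb Z$-cyclic of period $k=|C|$, so the length is carried onto the period. Conversely, if $I=I_H$ with $T_E/I_H$ $\mathbb Z$-cyclic of period $k$, Example~\ref{cometex}(1) makes $E/H$ a comet with unique cycle $C$ of length $k$; using that $H$ is hereditary and $C^0\cap H=\varnothing$, any cycle of $E$ connecting to $C$ lies entirely in $E/H$ and hence equals $C$, so $C$ is maximal in $E$, and the same hereditariness argument identifies $E^0\setminus H$ as precisely the set of vertices connecting to $C$, i.e.\ $H=H_C$. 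This exhibits $C\mapsto I_{H_C}$ and $I_H\mapsto C$ as mutually inverse assignments, giving the bijection with period equal to length.

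Part (2) runs on the identical template with a maximal sink $z$ in place of $C$. I would set $H_z=E^0\setminus\{v\in E^0 : v\text{ connects to }z\}$, check it is hereditary and saturated, and identify $E/H_z$ as the full subgraph on the vertices connecting to $z$. Here saturation keeps $z$ the only sink (any other sink of $E/H_z$ would be a sink of $E$ connecting to $z$, hence equal to $z$), and maximality of $z$ rules out cycles in $E/H_z$ exactly as above, so $E/H_z$ is acyclic with unique sink $z$; Example~\ref{cometex}(2) then yields an infinite $\mathbb Z$-cyclic quotient. The converse and the bijectivity are obtained verbatim as in part (1), with ``unique sink'' replacing ``unique cycle''.

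I expect the main obstacle to be the bookkeeping around the quotient graph: confirming that $T_E/I_H\cong T_{E/H}$ with $E/H$ the \emph{induced} subgraph (so that Example~\ref{cometex} genuinely applies to $E/H$), and, in the converse directions, using hereditariness and saturation of $H$ to guarantee that a cycle or sink of $E$ lying above $C$ (respectively $z$) is actually realised inside $E/H$. The saturation checks in particular are delicate — one must verify them for \emph{every} regular vertex, not just those on $C$ — but once they are in place, everything else is the routine translation between the order-theoretic structure of $T_E$ and the connectivity of $E$.
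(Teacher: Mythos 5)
Your proposal is correct and follows essentially the same route as the paper's proof: your $H_C=E^0\setminus\{v : v\text{ connects to }C\}$ is exactly the paper's hereditary saturated set $H=\{v\in E^0 \mid T(v)\cap C^0=\varnothing\}$ (respectively $H=\{v \mid z\notin T(v)\}$ for a maximal sink), and both arguments pass through the order-ideal/hereditary-saturated correspondence, the isomorphism $T_E/I_H\cong T_{E/H}$, and the comet/acyclic characterisations of Example~\ref{cometex}. The only difference is that you spell out the saturation checks and the mutual-inverse verification that the paper leaves as ``one can easily check.''
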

    \begin{proof}
        
        (1) Let $C$ be a maximal  cycle in $E$. Let $H=\{ v\in E^0 \mid T(v) \cap C^0=\varnothing \}.$ The set $H$ is a hereditary and saturated subset of $E$ (the empty set if and only if $E$ is a comet, i.e., it consists of one unique cycle--the cycle $C$-- and any vertex connects to this cycle). Consider the $\mathbb Z$-order-ideal $I$ generated by $H$ in $T_E$. We show that $T_E/I$ is a $\mathbb Z$-cyclic monoid.  We have $T_E/I \cong T_{E/H}$ as $\mathbb Z$-monoids. Since the cycle $C$ is maximal, it is easy to see that  $E/H$ is a comet graph. Thus $T_E/I \cong \bigoplus_{i=1}^k \mathbb N$, where $k$ is the length of the cycle $C$ (see Example~\ref{cometex}). Conversely, suppose $I$ is a $\Z$-order-ideal of $T_E$ with $T_E/I$ cyclic of period $k$. Then $I$ determines a hereditary and saturated subset $H$ such that $H$ generates $I$.  Thus $T_E/I \cong T_{E/H}$. By Example~\ref{cometex}, the graph $E/H$ is a comet graph with its cycle of length $k$. By the construction of $E/H$, the cycle in $E/H$ is a maximal cycle of $E$. One can easily check that this correspondence is a one-to-one correspondence which completes (1). 
        
        (2) Let $z$ be a maximal sink in $E$. Let $H=\{ v\in E^0 \mid  z \not \in T(v)  \}.$ The set $H$ is a hereditary and saturated subset of $E$ (the empty set if and only if $E$ is a acyclic with the unique sink $z$). Consider the $\mathbb Z$-order-ideal $I$ generated by $H$ in $T_E$. We show that $T_E/I$ is an infinite cyclic monoid.  We have $T_E/I \cong T_{E/H}$ as $\mathbb Z$-monoids. It is easy to see that  $E/H$ is an acyclic graph with the unique sink $z$. Thus $T_E$ is an infinite cyclic monoid (see Example~\ref{cometex}). Conversely, suppose $I$ is a $\Z$-order-ideal of $T_E$ with $T_E/I$ an infinite cyclic monoid. Then $I$ determines a hereditary and saturated subset $H$, such that $I=\langle H \rangle$.  Thus $T_E/I \cong T_{E/H}$. By Example~\ref{cometex}, the graph $E/H$ is an acyclic graph with a unique sink.  By the construction of  $E/H$, this sink is maximal in the graph $E$. One can easily check that this correspondence is a one-to-one correspondence which completes (2).
  \end{proof}

\subsection{Leavitt path algebras}\label{lpat}
Leavitt path algebras are $\mathbb Z$-graded rings and the $\mathbb Z$-grading plays a cruicial role in this paper. 
We refer the reader to~\cite{hazi} for the basics of graded ring theory. Throughout the paper rings will have identities and modules are unitary in the sense that the identity of the ring acts as the identity operator on the module. 

Let $\Gamma$ be an abelian group.
A ring $R$ is \emph{$\Gamma$-graded} if $R$ decomposes as a direct sum $\bigoplus_{\gamma\in \Gamma} R_\gamma$, where each $R_\gamma$ is an additive subgroup of $R$ and $R_\gamma R_\eta \subseteq R_{\gamma+\eta}$, for all $\gamma,\eta\in \Gamma$. If $R$ is a $\K$-algebra, where $\K$ is a field, then $R$ is called \emph{$\Gamma$-graded algebra} if the image of $\K$ in $R$ lands in $R_0$, where $0$ is the neutral element of $\Gamma$. Therefore, the components $R_\gamma$, $\gamma \in \Gamma$, become $\K$-vector spaces.

Let $R$ be a $\Gamma$-graded ring. A right $R$-module $M$ is \emph{$\Gamma$-graded} if $M$ decomposes as a direct sum $\bigoplus_{\gamma \in \Gamma} M_\gamma$, where each $M_\gamma$ is an additive subgroup of $M$ and $M_\gamma R_\eta \subseteq M_{\gamma+\eta}$, for all $\gamma, \eta\in \Gamma$.
A graded left $R$-module is defined analogously. 
If $M$ is a $\Gamma$-graded $R$-module and $\gamma\in \Gamma$, then the \emph{$\gamma$-shifted} graded right $R$-module is $M(\gamma):= \bigoplus_{\eta\in \Gamma} M(\gamma)_\eta$, where $M(\gamma)_\eta = M_{\gamma+\eta}$, for all $\eta\in \Gamma$.

We write $\Modd R$ for the category of unitary right $R$-modules and module homomorphisms. If $R$ is $\Gamma$-graded, we denote by $\Gr R$ the category of unitary graded right $R$-modules with morphisms preserving the grading. 

For $\Gamma$-graded rings $R$ and $S$, a functor $\mathcal F \colon \Gr R \to \Gr S$ is called \emph{graded} if $\mathcal F(M(\gamma))=\mathcal F(M)(\gamma)$, for all graded $R$-module $M$ and
$\gamma \in \Gamma$.  For $\gamma\in\Gamma$, the \emph{shift functor}
\begin{equation}\label{shiftshift}
\mathcal{T}_{\gamma}\colon \Gr R\longrightarrow \Gr R,\quad M\mapsto M(\gamma)
\end{equation}
is an isomorphism with the property that $\mathcal{T}_{\gamma}\mathcal{T}_{\eta}=\mathcal{T}_{\gamma+\eta}$, for $\gamma,\eta\in\Gamma$.

For a $\Gamma$-graded ring $R$, recall that $K_0^{\gr}(R)$ is the \emph{graded Grothendieck group}, which is the group completion of the  monoid consisting of the graded isomorphism classes of finitely generated $\Gamma$-graded  projective $R$-modules equipped with the direct sum operation~\cite[Section 3.1.2]{hazi}. Explicitly, let $[P]$ denote the class of graded right $R$-modules graded isomorphic to $P$. Then   
the set 
\begin{equation}\label{zhongshan}
\mathcal V^{\gr}(R)=\big \{ \, [P] \mid  P  \text{ is a finitely generated graded projective R-module} \, \big \}
\end{equation}
 with addition $[P]+[Q]=[P\bigoplus Q]$ forms a commutative monoid.

The monoid $\mathcal V^{\gr}(R)$ is a $\Gamma$-monoid and the abelian group $K_0^{\gr}(R)$ is a $\mathbb Z[\Gamma]$-module with a structure induced from the $\Gamma$-monoid structure on the monoid. More specifically, if $\gamma \in \Gamma$, then for $[P]\in \mathcal V^{\gr}(R)$ we have that $\gamma \cdot [P] := [P(\gamma)]$ defines the action of $\Gamma$.
The class of $R$ itself defines an order unit in $K_0^{\gr}(R)$, and if $S$ is also a $\Gamma$-graded ring, we say that a homomorphism $K_0^{\gr}(R) \rightarrow K_0^{\gr}(S)$ is \emph{pointed (or unital)} if it takes $[R]$ to $[S]$.
Of particular interest to us is the case when $\Gamma = \mathbb Z$ in which case $K_0^{\gr}(R)$ is a $\mathbb Z[x,x^{-1}]$-module. Note that any unital graded homomorphism $\theta : R\to S$ of $\mathbb Z$-graded rings $R$ and $S$, induces a pointed order-preserving $\mathbb Z[x,x^{-1}]$-module homomorphism $K_0^{\gr}(R)\to K_0^{\gr}(S)$.

\begin{deff}\label{defgrrep}
Let $R$ be a $\K$-algebra, where $\K$ is a field. A right $R$-module $M$ is called a \emph{finite representation} of $R$ (or \emph{a finite dimensional $R$-module}) if $M$ has a finite dimension as a $\K$-vector space. If $R$ is a $\Gamma$-graded $\K$-algebra, then a graded right $R$-module $M=\bigoplus_{\gamma\in \Gamma}M_\gamma$ is called a $\Gamma$-\emph{locally finite representation} of $R$ (or \emph{a $\Gamma$-locally finite dimensional graded $R$-module})  if $M_\gamma$ has a finite dimension as a $\K$-vector space, for any $\gamma \in \Gamma$. If $M$  has finite dimension, then $M$ is called a \emph{finite graded representation}. The graded $\K$-algebra  $R$ is called \emph{locally finite}, if it is locally finite as a module over itself (see \cite[Definition~4.2.15]{lpabook}. 
\end{deff}

The full subcategory of $\Modd R$ consisting of finite-dimensional modules is denoted by  $\fModd R$.
Similarly, the full subcategory of $\Gr R$ consisting of locally finite dimensional graded modules is denoted by $\fGr[\Gamma]R$. Note that, although $\Gr R$ is a subcategory of $\Modd R$,  the category $\fGr[\Gamma]R$ is not a subcategory of $\fModd R$. 
As an example, although the module $\K[x,x^{-1}]$ is not a finite representation for the Laurent polynomial ring $\K[x,x^{-1}]$, however considering  $\K[x,x^{-1}]$ as a $\mathbb Z$-graded ring, then $\K[x,x^{-1}]=\bigoplus_{i\in \mathbb Z} \K x^i$ is a locally finite graded representation, as $\K[x,x^{-1}]_i=\K x^i$, $i\in \mathbb Z$, is a $1$-dimensional $\K$-vector space. Lastly, we denote by $\fGr R$ the category of finite graded representations. Clearly $\fGr R$ is a full subcategory of $\fModd R$.

The theory of Leavitt path algebras provides us with a rich source of examples of (graded) rings.  Leavitt path algebras are graded von Neumann regular rings, hence, they have very pleasant graded properties~\cite[\S 1.1.9]{hazi}. Among the class of graded rings one can construct out of these algebras are graded matrix rings, strongly graded rings and crossed products (see~\cite[\S 1.6.4]{hazi}). 

We start with the definition of path algebras. Let $E$ be a graph and $\K$ a field. The \emph{path algebra} $P_\K(E)$ is the free algebra generated by the sets of vertices $E^0$ and edges $E^1$ of $E$ with coefficients in $\K$, subject to the relations $vw=\delta_{vw}v$, for $v,w \in E^0$ and  $s(e)e=e r(e)=e$ for all  $e \in E^1$.

We are in a position to recall the definition of Leavitt path algebras~\cite{lpabook}.

\begin{deff}\label{llkas}
For a row-finite graph $E$ and a  field $\K$, we define the \emph{Leavitt path algebra of $E$},  denoted by $L_\K(E)$, to be the algebra generated by the sets $\{v \mid v \in E^0\}$, $\{ e \mid e \in E^1 \}$ and $\{ e^* \mid e \in E^1 \}$ with the coefficients in $\K$, subject to the relations 

\begin{enumerate}
\item $vw=\delta_{vw}v \textrm{ for every } v,w \in E^0$;

\item $s(e)e=e r(e)=e \textrm{ and }
r(e)e^*=e^*s(e)=e^*  \textrm{ for all } e \in E^1$;

\item $e^* e'=\delta_{e e'}r(e)$ for all $e, e' \in E^1$;

\item $\sum_{\{e \in E^1, s( e)=v\}} e e^*=v$ for every regular vertex $v\in E^0$.

\end{enumerate}
\end{deff}
Relations (3) and (4) are the \emph{Cuntz-Krieger relations}. The elements $e^*$ for $e \in E^1$ are called \emph{ghost edges}. \index{ghost edge} One can show that $L_\K(E)$ is a ring with identity if and only if the graph $E$ is finite (otherwise, $L_\K(E)$ is a ring with local identities, see~\cite{lpabook}).

Setting $\deg(v)=0$ for $v\in E^0$, $\deg(e)=1$ and $\deg(e^*)=-1$ for $e \in E^1$,  we obtain a natural $\mathbb Z$-grading on the free $\K$-ring generated by  $\big \{v,e, e^* \mid v \in E^0,e \in E^1\big \}$. Since the relations in Definition~\ref{llkas} are all homogeneous, the ideal generated by these relations is homogeneous and thus we have a natural $\mathbb Z$-grading on $L_\K(E)$.

The relations between the talented monoids and the Leavitt path algebras come from the fact that $K^{\gr}_0(L_\K(E))$ is the group completion of $\mathcal V^{\gr}(L_\K(E))$ and that there is a $\mathbb Z$-monoid isomorphism 
\[
T_E\cong \mathcal V^{\gr}(L_\K(E)).
\]

The Graded Classification Conjectures for Leavitt path algebras and graph $C^*$-algebras can then be stated as follows~\cite{willie, mathann, hazmark}: 

\begin{conjn}\label{conji1}
Let $E$ and $F$ be finite graphs and $\K$ a field. Then the following are equivalent:

\begin{enumerate}[\upshape(1)]
\item  the talented monoids \(T_E\) and \(T_F\) are \(\mathbb{Z}\)-isomorphic;

\item  the Leavitt path algebras \(L_\K(E)\) and \(L_\K(F)\) are graded Morita equivalent;

\item the graph $C^*$-algebras $C^*(E)$ and $C^*(F)$ are equivariant Morita equivalent. 
\end{enumerate}

Furthermore,  the following are equivalent:

\begin{enumerate}[\upshape(1)]
\item  the talented monoids \(T_E\) and \(T_F\) are pointed \(\mathbb{Z}\)-isomorphic;

\item  the Leavitt path algebras \(L_\K(E)\) and \(L_\K(F)\) are graded isomorphic;

\item the graph $C^*$-algebras $C^*(E)$ and $C^*(F)$ are equivariant isomorphic. 
\end{enumerate}
\end{conjn}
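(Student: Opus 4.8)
The plan is to prove the two blocks of equivalences in Conjecture~\ref{conji1} by splitting off the routine implications toward $(1)$, which come from functoriality of graded and equivariant $K$-theory, from the substantive classification direction $(1)\Rightarrow(2)$ (and $(1)\Rightarrow(3)$), which is where the real difficulty lies and which I would attack through the representation-theoretic machinery of the paper rather than by hand. For the implications $(2)\Rightarrow(1)$ and $(3)\Rightarrow(1)$: a graded Morita equivalence $\Gr L_\K(E)\approx_{\gr}\Gr L_\K(F)$ is by definition an equivalence commuting with the shift functors $\mathcal T_\gamma$, so it carries finitely generated graded projectives to finitely generated graded projectives and induces a $\mathbb Z$-monoid isomorphism $\mathcal V^{\gr}(L_\K(E))\cong\mathcal V^{\gr}(L_\K(F))$; composing with the identifications $T_E\cong\mathcal V^{\gr}(L_\K(E))$ from Section~\ref{lpat} yields the $\mathbb Z$-isomorphism $T_E\cong T_F$. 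The same argument applied to the ordered equivariant $K_0$ of the graph $C^*$-algebras, which is again modelled by $T_E$, gives $(3)\Rightarrow(1)$, and tracking order units throughout gives the pointed versions.

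For the hard direction $(1)\Rightarrow(2)$ I would not try to construct a graded Morita bimodule directly, but instead transport module categories across the combinatorial data. First I would pass through the identifications $\Modd L_\K(E)\cong\Rep(\hat E,\rr)$ and $\Gr L_\K(E)\cong\Rep(\overline{\hat E},\overline{\rr})$ to reformulate the problem as an equivalence of representation categories of the double graph and its covering. The decisive input is Theorem~\ref{talmax}: a $\mathbb Z$-isomorphism $\phi\colon T_E\to T_F$ is an isomorphism of the lattices of $\mathbb Z$-order-ideals and preserves which quotients are $\mathbb Z$-cyclic of finite period or infinite $\mathbb Z$-cyclic, so it produces a length-preserving bijection between the maximal cycles of $E$ and $F$ and between their maximal sinks. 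By the analysis of Ko\c{c} and \"{O}zayd\i n these maximal components are exactly the parts of the graph carrying finite-dimensional representations, so the bijection upgrades to equivalences $\fRep(\hat E,\rr)\cong\fRep(\hat F,\rr)$ and $\fRep(\overline{\hat E},\overline{\rr})\cong\fRep(\overline{\hat F},\overline{\rr})$, the latter compatible with the shift auto-equivalences $\mathcal T_k$. Transporting back, and imposing the hypotheses that $E$ and $F$ have no sources (and, for the locally finite graded case, no sinks), would then yield $\fModd L_\K(E)\approx\fModd L_\K(F)$, $\fGr L_\K(E)\approx_{\gr}\fGr L_\K(F)$, and $\fGr[\mathbb Z]L_\K(E)\approx_{\gr}\fGr[\mathbb Z]L_\K(F)$.

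The equivalences $(2)\Leftrightarrow(3)$ would be obtained by factoring both through $(1)$: since the graded Grothendieck group of $L_\K(E)$ and the ordered equivariant $K_0$ of $C^*(E)$ are each the group completion of $T_E$ as an ordered $\mathbb Z[x,x^{-1}]$-module, a monoid isomorphism $T_E\cong T_F$ feeds the same constructions into both the algebraic and the analytic settings. For the pointed block one runs the identical arguments while tracking the distinguished class $[L_\K(E)]\leftrightarrow 1_E$: a pointed isomorphism $\phi(1_E)=1_F$ matches the regular representations and forces the transported equivalence to send $[L_\K(E)]$ to $[L_\K(F)]$, the extra rigidity that should promote Morita equivalence to a genuine graded isomorphism.

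The main obstacle is the final lift in $(1)\Rightarrow(2)$: the representation-theoretic method delivers an equivalence only on the (locally) finite-dimensional subcategories, whereas $(2)$ demands a graded equivalence of the \emph{full} module categories $\Gr L_\K(E)\approx_{\gr}\Gr L_\K(F)$. The finite-dimensional modules do not generate all graded modules, and an equivalence defined on them need not extend, so bridging this gap --- whether by checking that the constructed functors respect directed colimits, or by assembling the transported cycle-and-sink data into an honest Morita bimodule --- is precisely the step that remains open, and is the reason the statement is recorded here as a conjecture.
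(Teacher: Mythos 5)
The statement you set out to prove is recorded in the paper as a \emph{conjecture}, and the paper contains no proof of it; its main theorem (Theorem~\ref{mainthfer}) is only the partial result that a $\mathbb Z$-isomorphism $T_E\cong T_F$ yields equivalences $\fModd L_\K(E)\approx\fModd L_\K(F)$, $\fGr L_\K(E)\approx_{\gr}\fGr L_\K(F)$ and $\fGr[\mathbb{Z}]L_\K(E)\approx_{\gr}\fGr[\mathbb{Z}]L_\K(F)$, under hypotheses on sources and sinks. Your proposal is consistent with this state of affairs: the directions $(2)\Rightarrow(1)$ and $(3)\Rightarrow(1)$, via functoriality of $\mathcal V^{\gr}$ under shift-commuting equivalences and the identification $T_E\cong\mathcal V^{\gr}(L_\K(E))$, are correct and standard, and your route toward $(1)\Rightarrow(2)$ reproduces exactly the paper's machinery: Theorem~\ref{talmax} for the length-preserving bijection between maximal cycles (and sinks), then Propositions~\ref{mainrain} and~\ref{mainlemma1} to transport to $\fRep(\hat E,\rr)$ and $\fRep(\overline{\hat E},\overline{\rr})$. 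Most importantly, you correctly name the genuine gap: this method delivers equivalences only on the (locally) finite-dimensional subcategories, and the lift to the full categories $\Gr L_\K(E)\approx_{\gr}\Gr L_\K(F)$ is precisely what remains open --- the paper itself says the ``hope is to be able to lift this graded equivalence'' and thereby settle the conjecture. So your attempt is not, and cannot be, a proof, but your diagnosis of where the difficulty sits agrees with the paper.

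Two caveats you should fix in your write-up. First, your treatment of $(2)\Leftrightarrow(3)$ is circular as written: ``factoring both through $(1)$'' requires the implications $(1)\Rightarrow(2)$ and $(1)\Rightarrow(3)$, which are exactly the open directions; what your argument actually establishes is only $(2)\Rightarrow(1)$ and $(3)\Rightarrow(1)$, so $(2)$ and $(3)$ remain unlinked. The same applies to the pointed block, where the claim that a pointed isomorphism ``should promote'' Morita equivalence to a graded isomorphism is heuristic, not an argument. Second, there is a hypothesis mismatch: the conjecture concerns arbitrary finite graphs, while the supporting results you invoke require sources to be isolated or absent (and, for the $\fGr[\mathbb{Z}]$ equivalence, no sinks either, per the remark following Theorem~\ref{mainthfer}); so even as partial evidence your outline does not cover the conjecture's full generality. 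Finally, be careful not to suggest that equivalence of the finite-dimensional subcategories could, by itself, generate the full graded Morita equivalence: as you note, finite-dimensional modules do not generate $\Gr L_\K(E)$ (indeed for a graph whose maximal components are cycles these subcategories see only a small corner of the module category), so any lifting argument must inject genuinely new input, e.g.\ an honest graded Morita bimodule, rather than a formal colimit extension.
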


Theorem~\ref{talmax} shows that the talented monoid $T_E$ can completely capture maximal sinks and cycles.  In contrast,  the graph monoid $M_E$ can't be used to describe the (graded) locally finite or irreducible representations of a Leavitt path algebra which are related to maximal cycles and sinks. 
As an example, consider the following graphs:  One can easily check that  $M_E\cong M_F \cong M_G$, but $E$ has two maximal cycles, $F$ has one maximal cycle and one maximal sink, whereas $G$ has two maximal sinks.

\begin{equation}\label{ggffdd1}
{\def\labelstyle{\displaystyle}
E : \quad \,\,  \xymatrix{
& \bullet \ar@(ur,rd)\\
\bullet  \ar[ru]\ar[dr] & \\
& \bullet \ar@(ur,rd)
}}
\qquad \quad \quad \quad
{\def\labelstyle{\displaystyle}
F : \quad \,\,  
\xymatrix{
& \bullet \ar@(ur,rd)\\
\bullet   \ar[ru]\ar[dr]  &  \\
& \bullet
}} 
\qquad \quad \quad \quad \quad \quad
{\def\labelstyle{\displaystyle}
G : \quad \,\,  \xymatrix{
& \bullet\\
 \bullet    \ar[ru]\ar[dr]  &   \\
 & \bullet
}} 
\end{equation}

\section{Representation of graphs and graph algebras} 
We recall the notion of the representation of a graph with relations according to Green~\cite{green} and then specialise it to the case of Leavitt path algebras. Consider a graph $E$ as a category whose objects are the vertices and for two vertices $v,w$, the morphism set $\Hom_E(v,w)$ are the paths from $v$ to $w$. A covariant functor from the category $E$ to $\vc \K$, the category of vector spaces over a field $\K$, is called a \emph{representation} of $E$. By $\Rep(E)$ we denote the functor category whose objects are the representations of $E$ and morphisms are the natural transformations. Given that we write the paths in a graph from left to right (\S\ref{graphsec}), if $\rho\in \Rep(E)$, and $p=e_1e_2\cdots e_n$ is a path in $E$, then the linear transformation in $\vc \K$ corresponding to path $p$ takes the form $\rho(e_1e_2\cdots e_n)=\rho(e_n)\rho(e_{n-1})\cdots \rho(e_1)$.
The full subcategory of $\Rep(E)$, consisting of all \emph{finite} representations $\rho\in \Rep(E)$, that is, those that  $\bigoplus_{v\in E^0} \rho(v)$ is a finite dimensional $\K$-vector space, is denoted by $\fRep(E)$. Note that if the graph $E$ is finite, then $\fRep(E)$ consists of all functors $\rho:E\rightarrow \fvc \K$, where $\fvc \K$ is the category of finite dimensional vector spaces of $\K$.

\begin{deff}\label{defrel1} Let $E$ be a finite graph. 
A \emph{relation}  in the graph $E$ is a finite formal sum, $\sum_i k_i p_i$, where $k_i\in \K\backslash \{0\}$ and $p_i \in \Hom_E(v,w)$ for fixed vertices $v,w$. Let $r$ be a set of relations in $E$. Denote by $\Rep(E,r)$ the full subcategory of $\Rep(E)$ which satisfies the relations in $r$, i.e., representations $\rho: E \rightarrow \vc \K$ such that 
$\sum_i k_i \rho(p_i)=0$, where $\sum_i k_i p_i$ is a relation in $r$. The subcategory of finite representations with relations are denoted by $\fRep(E,r)$. 
\end{deff}

Let $\langle r \rangle$ be the two-sided ideal of the path algebra $P_\K(E)$ generated by the set of relations $r$ as in Definition~\ref{defrel1}. Define the $\K$-algebra
\begin{equation}\label{brejki}
A_\K(E,r):= P_\K(E)/\langle r \rangle. 
\end{equation}
As observed in~\cite{green}, there exists an exact equivalent covariant functor  
\begin{equation}\label{hng32}
\Phi: \Modd A_\K(E,r) \longrightarrow \Rep(E,r),
\end{equation}
as follows.  Denote the images of $u \in E^0$ and $e\in E^1$ in $A_\K(E,r)$ by the same letters again. 
 For a right $A_\K(E,r)$-module $X$, define the functor $\rho_X: E \rightarrow \vc \K$ by
\begin{align*}
 \rho_X(u)=Xu,&\\
 \rho_X(e:u\longrightarrow v): X u & \longrightarrow X v\\
 xu &\longmapsto xe,
 \end{align*}
 and extend $\rho$ naturally to all paths.   Writing $e=uev$ in the path algebra, shows that the above map is well-defined. One can check that $\rho_X$ satisfies the relations $r$. 
 
 The inverse exact functor 
\begin{equation}\label{hng3277}
\Psi: \Rep(E,r) \longrightarrow \Modd A_\K(E,r),
\end{equation}
is defined as follows. Let $\rho: E \rightarrow \vc \K$. Set 
\[ X=\bigoplus_{u\in E^0} \rho(u).\] Let $i_u$ and $\pi_u$ be the canonical injective and projective homomorphisms  
\[ \rho(u) \stackrel{i_u}{\longrightarrow} \bigoplus_{u\in E^0} \rho(u) \stackrel{\pi_u}{\longrightarrow} \rho(u).\]
If $e:u\rightarrow v$ is an edge in $E$, and $x\in X$, then define 
\[x.e=i_{v}(\rho(e)(\pi_{u}(x))),\] and extend naturally to all paths in $E$.  
Again here $e$ is denoted the image of the path $e$ in the ring $A_\K(E,r)$. 

Recall that $\fRep(E,r)$ is the full subcategory  of all representations $\rho \in \Rep(E,r)$ such that  $\bigoplus_{v\in E^0} \rho(v)$ is a finite dimensional $\K$-vector space.  Denote the full subcategory of the finite representations of the algebra $A_\K(E,r)$ by $\fModd \!A_\K(E,r)$, namely, the collection of all right $A_\K(E,r)$-modules $X$, such that $X$ as a $\K$-vector space is finite dimensional (Definition~\ref{defgrrep}).

Since $E$ is a finite graph, $A_\K(E,r)$ is indeed a unital $\K$-algebra with $\K\rightarrow A_\K(E,r); k\mapsto \sum_{u\in E^0}ku$. Thus if $X$ is an $A_\K(E,r)$-module, then $X = \bigoplus_{u\in E^0} Xu$ as a $\K$-vector space. Thus \[\dim_\K X=\sum_{u\in E^0}\dim_\K Xu. \]

We then have the following proposition (\cite[Theorem 1.1]{green}).

\begin{prop}\label{mainrain}
Let $E$ be a finite graph, $r$ a set of relations in $E$ and let $\Psi: \Rep(E,r) \rightarrow \Modd A_\K(E,r)$ be the equivalence of the categories of~{\upshape (\ref{hng3277})}. Then $\Psi$ restricts to the categories of finite representations $\Psi: \fRep(E,r) \rightarrow \fModd \!A_\K(E,r)$.
\end{prop}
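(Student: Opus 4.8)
The plan is to reduce the whole statement to the single dimension identity $\dim_\K X=\sum_{u\in E^0}\dim_\K Xu$ already recorded before the proposition, which holds precisely because $E$ is finite and $A_\K(E,r)$ is unital with identity $\sum_{u\in E^0}u$. This identity controls both directions of the equivalence at once, so the argument is essentially a matter of checking that finite-dimensionality is matched up correctly on each side and then invoking the general fact that an equivalence restricts to full subcategories that it (and its quasi-inverse) preserves.

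First I would check that $\Psi$ of~(\ref{hng3277}) sends $\fRep(E,r)$ into $\fModd A_\K(E,r)$. By construction the module $\Psi(\rho)$ has underlying $\K$-vector space $X=\bigoplus_{u\in E^0}\rho(u)$. But the defining condition for $\rho\in\fRep(E,r)$ is exactly that $\bigoplus_{v\in E^0}\rho(v)$ is finite dimensional, so $X$ is finite dimensional and hence $\Psi(\rho)\in\fModd A_\K(E,r)$. Next I would verify the reverse inclusion for the quasi-inverse $\Phi$ of~(\ref{hng32}): if $X$ is a finite dimensional $A_\K(E,r)$-module, then each $\rho_X(v)=Xv$ is a subspace of $X$, and the decomposition $X=\bigoplus_{u\in E^0}Xu$ gives $\bigoplus_{v\in E^0}\rho_X(v)=X$, which is finite dimensional; thus $\rho_X\in\fRep(E,r)$.

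Finally, since $\Psi$ and $\Phi$ are mutually quasi-inverse equivalences on the ambient categories $\Rep(E,r)$ and $\Modd A_\K(E,r)$, and since both carry the respective finite full subcategories into one another by the two inclusions just established, the natural isomorphisms $\Phi\Psi\cong\id$ and $\Psi\Phi\cong\id$ restrict to natural isomorphisms between the corresponding restricted composites. This yields the claimed equivalence $\Psi:\fRep(E,r)\to\fModd A_\K(E,r)$. I do not expect a genuine obstacle here; the only point meriting a line of care is that $\fRep(E,r)$ and $\fModd A_\K(E,r)$ are \emph{full} subcategories preserved in both directions, which is exactly the hypothesis needed for the restriction of an equivalence to remain an equivalence, and that the finiteness of $E$ is what guarantees $X=\bigoplus_{u\in E^0}Xu$ so that the two notions of finite-dimensionality coincide under $\Psi$ and $\Phi$.
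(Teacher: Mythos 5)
Your proposal is correct and matches the paper's own justification: the paper gives no separate proof, but the paragraph immediately preceding the proposition records exactly your key point --- that finiteness of $E$ makes $A_\K(E,r)$ unital so that any module decomposes as $X=\bigoplus_{u\in E^0}Xu$ with $\dim_\K X=\sum_{u\in E^0}\dim_\K Xu$ --- and then invokes Green's equivalence, so the two finiteness notions correspond under $\Psi$ and $\Phi$. Your write-up simply makes explicit the same two inclusions and the standard restriction-of-equivalence argument that the paper leaves implicit.
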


Next we establish an equivalence between the category of graded modules of $A_\K(E,r)$ and the representation of the covering graph of $E$. Although this can be done for an arbitrary $\Gamma$-graded ring, we restrict ourselves to $\mathbb Z$-grading, as in the setting of Leavitt path algebras, this is the canonical grading. Thus throughout this section, our grade group is the integers $\Z$ and the grading on the algebras arise from the weight function $w:E^1 \rightarrow \Z$, assigning $1$ or $-1$ to each edge. We assume that the relations $r$ in the Definition~\ref{defrel1} are homogeneous, i.e., all $p_i$'s in $\sum_i k_i p_i\in r$ have the same degree. Thus $A_\K(E,r)=P_\K (E)/\langle r\rangle$ is a $\mathbb Z$-graded ring.

With integers $\mathbb Z$ as the grade group,  the \emph{covering graph} $\overline E$ (denoted in some literature by $E\times_1 \mathbb Z$) is defined as the graph 
\begin{align}\label{nmehr65}
\overline E^0 = &\big \{v_i \mid v \in E^0, i \in \Z \big \},\\
\overline E^1 = &\big \{ e_{i}:u_{i-1}\longrightarrow v_i \mid e:u \longrightarrow v,  e\in E^1, \deg(e)=1, \text{ and }  i \in \Z \big \} \bigcup \notag\\
& \qquad \qquad \qquad  \big \{ f_{i}:u_{i}\longrightarrow v_{i-1} \mid f:u \longrightarrow v, f\in E^1, \deg(f)=-1, \text{ and } i\in \Z \big \}
. \notag
\end{align}


Let $r$ be a set of relations in a graph $E$.  For a finite path $p=e^1e^2\cdots e^n$ in $E$ and $j\in \mathbb Z$, there is a path $p_j$ in $\overline{E}$ given by
$$p_j = e^1_{j} e^2_{j+1} \cdots e^n_{j+n}.$$
The \emph{lifting} $\overline{r}$ of the relations $r$ in $\overline{E}$ is the set of all relations $\sum_ik_i(q_i)_j$, for every relation $\sum_ik_iq_i\in r$ and every  $j\in \mathbb Z$, where $k_i\in \K$ and $q_i$ are paths in $E$ as described in Definition~\ref{defrel1}.

Our first task is to establish, for any $k\in \mathbb Z$, an auto-equivalence (\emph{a shift functor}) 
\begin{align}\label{autoeqi}
\mathcal T_k:\Rep(\overline E, \overline r) &\longrightarrow \Rep(\overline E,\overline r),\\
\rho &\longmapsto \rho(k)\notag
\end{align} 
which corresponds with the natural shift functor on the category of graded modules of the algebra 
\begin{align}\label{autoeqi1}
\mathcal T_k:\Gr A_\K( E,  r)  &\longrightarrow \Gr A_\K( E,  r),\\
M &\longmapsto M(k).\notag
\end{align}
Let $\rho:\overline E \rightarrow \vc \K$ be a graph representation in $\Rep(\overline E, \overline r)$.  For $k\in \mathbb Z$, define $$\rho(k):=\mathcal T_k(\rho) :\overline E \rightarrow \vc \K,$$ as follows:  
\begin{align}\label{sydairpoit5}
&\rho(k)(u_i)=\rho(u_{i+k}),\\
&\rho(k)(e_i:u_{i-1}\longrightarrow v_i)=\rho(e_{i+k}):\rho(u_{i+k-1}) \longrightarrow \rho(v_{i+k}),\notag\\
&\rho(k)(f_i:u_{i}\longrightarrow v_{i-1})=\rho(f_{i+k}):\rho(u_{i+k}) \longrightarrow \rho(v_{i+k-1}).\notag
\end{align}
Notice that $\rho(k)$ also satisfies the relations $\overline r$ and $\mathcal T_k \mathcal T_{-k} = \id$. 

By Lemma~\ref{mainrain}, there is an equivalence  
\begin{equation}\label{hnggr3277}
\Psi: \Rep(\overline E,\overline r) \longrightarrow \Modd (A_\K(\overline E, \overline r)).
\end{equation}
Thus the shift functor $\mathcal T_k$ on $\Rep(\overline E,\overline r)$ induces a shift functor $\mathcal T_k$ on $\Modd A_\K(\overline E, \overline r)$.


Next we establish an equivalence between the category of graded modules $\Gr A_\K( E,  r)$ with the category of representations of covering graphs $\Rep(\overline E,\overline r)$. An important observation here is that this equivalence is ``graded'', i.e., it preserves the shifts in the respected categories (\ref{autoeqi}, \ref{autoeqi1}). 

Define the  covariant functor 
\begin{equation}\label{hng3211}
\Phi: \Gr A_\K( E,  r) \longrightarrow \Rep(\overline E,\overline r)
\end{equation}
as follows. For a graded right $A_\K( E,  r)$-module $X=\bigoplus_{i\in \mathbb Z} X_i$, define the functor $\rho_X: \overline E \rightarrow \vc \K$ by
\begin{align*}
 \rho_X(u_i)=X_i u,&\\
 \rho_X(e_i:u_{i-1} \longrightarrow v_i):  X_{i-1} u & \longrightarrow X_i v\\
 xu &\longmapsto xe,\\
 \rho_X(f_i:u_{i} \longrightarrow v_{i-1}): X_{i}u &\longrightarrow X_{i-1}v\\
 xu &\longmapsto xf,
 \end{align*}
 and extend $\rho_X$ naturally to all paths. Note that since $\deg(e)=1$ and $\deg(f)=-1$, $xe$ and $xf$ land on the correct component of $X$. 

The inverse exact functor  \begin{equation}\label{hng3266}
\Psi: \Rep(\overline E,\overline r) \longrightarrow \Gr A_\K( E,  r),
\end{equation}
is defined as follows. Let $\rho: \overline E \rightarrow \vc \K$. Set 
\[ X=\bigoplus_{i\in \mathbb Z} X_i,\] where
\begin{equation}\label{sydair3}
X_i  = \bigoplus_{u\in E^0} \rho(u_i).
\end{equation}

For a given $i\in \mathbb Z$, let $i_{u_i}$ and $\pi_{u_i}$ be the canonical injective and projective homomorphisms  
\[ \rho(u_i) \stackrel{i_{u_i}}{\longrightarrow} \bigoplus_{u\in E^0} \rho(u_i) \stackrel{\pi_{u_i}}{\longrightarrow} \rho(u_i).\]
If $e:u\rightarrow v$ is an edge in $E$ with $w(e)=1$, and $x\in X_i$, then define 
\[x.e=i_{v_{i+1}}(\rho(e_{i+1})(\pi_{u_i}(x))).\] Similarly, for $f:u\rightarrow v$ with $w(f)=-1$, and $x\in X_i$, define
\[x.f=i_{v_{i-1}}(\rho(f_{i})(\pi_{u_i}(x))).\]
Again here $e$ and $f$ are denoted the image of the path $e$ and $f$ in the ring $A_\K(E,  r)$. 

Next we will relate locally finite graded  representations of a $\K$-algebra (Definition~\ref{defgrrep}) associated to a graph $E$  to the category of $\Rep(\overline E,\overline r).$

We say a functor $\rho \in \Rep(\overline E,\overline r)$ is a \emph{$\mathbb{Z}$-locally finite representation} of $E$ if for any given $i\in \mathbb Z$, $\rho(u_i)$, $u\in E^0$, is zero for all except a finite number of vertices whose dimension as a $\K$-vector space is finite, i.e., for any given $i \in \mathbb Z$, 
\begin{equation}\label{jfhyt5}
 \dim_\K \big (\bigoplus_{u\in E^0}\rho(u_i)\big) < \infty.
 \end{equation}
 Denote the full subcategory of $\mathbb Z$-\emph{locally finite representations}  of $E$ by $\ffRep(\overline E,\overline r)$ and the locally finite graded representation of the algebra $A_\K(E,r)$ by 
$\fGr[\mathbb{Z}] A_\K(E,r)$. Note that $\fRep(\overline E,\overline r)$ is a full subcategory of $\ffRep(\overline E,\overline r)$ consisting of those functors $\rho$ such that $\dim_\K(\bigoplus_{u\in E^0, i\in \mathbb Z}  \rho(u_i)) < \infty$.

\begin{prop}\label{mainlemma1}
Let $E$ be a graph, $r$ a set of homogeneous relations in $E$ and let $\mu: \Rep(\overline E,\overline r) \rightarrow \Gr A_\K(E, r)$ be the equivalence of the categories of~{\upshape (\ref{hng3266})}. Then $\Psi$ commutes with the shift functors $\mathcal T_k$, $k\in \Z$:
\begin{equation}\label{sfox2}
\xymatrix{
  \Rep(\overline E,\overline r)  \ar[r]^{\Psi} \ar[d]_{\mathcal T_k} & \Gr A_\K( E,  r) \ar[d]^{\mathcal T_k}\\
 \Rep(\overline E,\overline r)  \ar[r]^{\Psi}  & \Gr A_\K( E,  r). 
} 
\end{equation}
Furthermore, $\Psi$ restricts to the categories of locally finite representations 
$\psi: \ffRep(\overline E,\overline r) \rightarrow \fGr[\mathbb{Z}] \!A_\K(E, r)$.  
\end{prop}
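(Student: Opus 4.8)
The plan is to prove the two assertions separately: commutation of $\Psi$ with the shift functors is an on-the-nose computation from the explicit formulas, while the restriction to the locally finite subcategories follows by comparing graded dimensions and invoking the fact that $\Psi$ is already an equivalence.

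For the commuting square (\ref{sfox2}), I would fix $\rho\in\Rep(\overline E,\overline r)$ and $k\in\mathbb Z$ and compare $\Psi(\mathcal T_k\rho)=\Psi(\rho(k))$ with $\mathcal T_k\Psi(\rho)=\Psi(\rho)(k)$ degree by degree. Using (\ref{sydair3}) and (\ref{sydairpoit5}), the degree-$i$ component of $\Psi(\rho(k))$ is $\bigoplus_{u\in E^0}\rho(k)(u_i)=\bigoplus_{u\in E^0}\rho(u_{i+k})$, whereas by the convention $M(k)_i=M_{i+k}$ the degree-$i$ component of $\Psi(\rho)(k)$ is $\Psi(\rho)_{i+k}=\bigoplus_{u\in E^0}\rho(u_{i+k})$; these are literally the same graded vector space. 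It then remains to check that the $A_\K(E,r)$-actions coincide, and here one must treat a degree $+1$ edge $e$ and a degree $-1$ (ghost) edge $f$ separately because of the asymmetric placement $e_i\colon u_{i-1}\to v_i$ and $f_i\colon u_i\to v_{i-1}$ in $\overline E$. Substituting $\rho(k)(e_{i+1})=\rho(e_{i+k+1})$ and $\rho(k)(f_i)=\rho(f_{i+k})$ into the action formulas of (\ref{hng3266}), both modules yield $x.e=i_{v_{i+k+1}}\big(\rho(e_{i+k+1})(\pi_{u_{i+k}}(x))\big)$ and $x.f=i_{v_{i+k-1}}\big(\rho(f_{i+k})(\pi_{u_{i+k}}(x))\big)$ with respect to the same decomposition. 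Hence $\Psi(\mathcal T_k\rho)$ and $\mathcal T_k\Psi(\rho)$ are equal as graded modules; the comparison isomorphism may be taken to be the identity, so the square commutes strictly and naturality in $\rho$ is automatic (a one-line check on morphisms).

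For the second statement, I would use that $\Psi$ is an equivalence, being the inverse of the functor $\Phi$ of (\ref{hng3211}), so it restricts to an equivalence between any two full subcategories that correspond to one another under $\Psi$. By (\ref{sydair3}) we have $\dim_\K\Psi(\rho)_i=\dim_\K\big(\bigoplus_{u\in E^0}\rho(u_i)\big)$ for every $i$, so the local-finiteness condition (\ref{jfhyt5}) defining $\ffRep(\overline E,\overline r)$ holds for $\rho$ if and only if every graded piece $\Psi(\rho)_i$ is finite dimensional, i.e. if and only if $\Psi(\rho)\in\fGr[\mathbb{Z}]A_\K(E,r)$ (Definition~\ref{defgrrep}). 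Thus $\Psi$ carries $\ffRep(\overline E,\overline r)$ onto $\fGr[\mathbb{Z}]A_\K(E,r)$ and restricts to the desired equivalence $\psi$.

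The computations above are entirely routine; the only point requiring care is the index bookkeeping between the two gradings, together with keeping the degree $+1$ and degree $-1$ edges distinct. I expect no genuine obstacle: the functor $\Psi$ was designed so that the module grading records exactly the $\mathbb Z$-index of the covering graph, which is precisely what forces compatibility with the shift, and the identity $\dim_\K\Psi(\rho)_i=\dim_\K\big(\bigoplus_{u\in E^0}\rho(u_i)\big)$ makes the subcategory correspondence immediate.
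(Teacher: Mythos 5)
Your proof is correct and follows essentially the same route as the paper: a degree-by-degree comparison of $\Psi(\rho(k))$ and $\Psi(\rho)(k)$ using (\ref{sydair3}) and (\ref{sydairpoit5}), together with the observation that the dimension identity $\dim_\K\Psi(\rho)_i=\dim_\K\bigl(\bigoplus_{u\in E^0}\rho(u_i)\bigr)$ makes the restriction to locally finite subcategories immediate from (\ref{jfhyt5}). If anything, you are slightly more careful than the paper, which only compares the graded components and leaves the agreement of the $A_\K(E,r)$-module actions (your computation with $e_{i+k+1}$ and $f_{i+k}$) and the check on morphisms implicit.
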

\begin{proof}
Let $\rho \in  \Rep(\overline E,\overline r)$. We show that $\mathcal T_k \Psi (\rho)= \Psi\mathcal T_k (\rho)$. On one hand, $\mathcal T_k \Psi (\rho) = \mathcal T_k (X)=X(k),$ where by (\ref{sydair3}), for any $i\in \mathbb Z$, $X_i  = \bigoplus_{u\in E^0} \rho(u_i)$, and
$$X(k)_i=X_{k+i}= \bigoplus_{u\in E^0} \rho(u_{i+k}).$$
On the other hand, 
 $\Psi\mathcal T_k (\rho)=\Psi(\rho(k))=Y,$ where by (\ref{sydair3}) and (\ref{sydairpoit5}), for any $i\in \mathbb Z$,
 $$Y_i= \bigoplus_{u\in E^0} \rho(k)(u_{i})=\bigoplus_{u\in E^0} \rho(u_{i+k}).$$
 This shows that the Diagram~\ref{sfox2} is commutative on objects. Similarly, one can observe that the diagram is commutative on morphisms. The fact that $\Psi$ restricts to $\mathbb{Z}$-locally finite representations follow immediately from the definition~(\ref{jfhyt5}).  
\end{proof}

\subsection{Specialising to the case of Leavitt path algebras}\label{lbvyfgjdh}

Let $E$ be a row-finite graph. Consider $\hat E$ to be a graph consisting of vertices and all the real and ghost edges of $E$ (the \emph{double graph} of $E$~\cite[pp.6]{lpabook}).  Since for any $e, e^*\in \hat E^1$, we have $ee^*\in \Hom(s(e),s(e))$ and $e^*e \in \Hom(r(e),r(e))$, the Cuntz-Krieger relations (3) and (4) in Definition~\ref{llkas} can be considered as relations in the graph $\hat E$ as defined in Definition~\ref{defrel1}. We denote the set of relations by $\rr$. 

Since, by construction, the Leavitt path algebra $L_\K(E)$  is the quotient of the path algebra $P_\K(\hat E)$ subject to the above Cuntz-Krieger relations,  we have $A_\K(\hat E, \rr)=L_\K(E)$ and thus by Propositions~\ref{mainrain} and \ref{mainlemma1} we have equivalences of categories 
\begin{equation}\label{xxxyyy}
\begin{split}
\Modd L_\K(E) &\longrightarrow \Rep(\hat E,  \rr) \\
\fModd L_\K(E) &\longrightarrow \fRep(\hat E,  \rr) 
\end{split}
\quad
\begin{split}
 \qquad \Gr L_\K(E) &\longrightarrow \Rep(\overline{\hat E}, \overline{\rr}),\\
 \qquad \fGr[\mathbb{Z}] L_\K(E) &\longrightarrow \fRep^\mathbb{Z}(\overline{\hat E}, \overline{\rr}).\notag
\end{split}
\end{equation}

Throughout this section the set of relations $r$ in the graph $\hat E$ are those of  (3) and (4) in Definition~\ref{llkas}. Furthermore, we continue calling edges coming from $E$ real edges and their doubles (the $*$-edges) ghost edges in $\hat E$. 

Consider  $\rho:\hat E \rightarrow \vc \K$  in  $\Rep(\hat E,  \rr)$. 
Let $v\in E^0$ be a regular vertex with $s^{-1}(v)=\{e_1,\dots,e_k\}$, the set of all real edges emitting from $v$. Consider the maps 
\begin{equation*}
\begin{split} 
f:\rho(v) &\longrightarrow \bigoplus_{1\leq i \leq k}  \rho(r(e_i))\\
x &\longmapsto (\rho(e_1)(x),\dots,\rho(e_k)(x)),
\end{split}
\quad \quad
\begin{split} 
g: \bigoplus_{1\leq i \leq k} \rho(r(e_i)) &\longrightarrow  \rho(v)\\
(x_1,\dots,x_k)  &\longmapsto \rho(e^*_1)(x_1)+\cdots + \rho(e^*_k)(x_k).
\end{split}
\end{equation*}
We check that $gf=1_{\rho(v)}$:
\begin{multline*}
gf(x)=g(\rho(e_1)(x),\dots,\rho(e_k)(x))=\rho(e^*_1)(\rho(e_1)(x))+\dots+\rho(e^*_k)(\rho(e_k)(x))\\
=\rho(e_1e_1^*+\dots+e_ke_k^*)(x)=\rho(v)(x)=1_{\rho(v)}(x)=x.
\end{multline*}
Similarly, one checks that $fg=1_{\bigoplus_{1\leq i \leq k} \rho(r(e_i))}$. Thus for any regular vertex $v\in \hat E^0$ we have 
\begin{equation}\label{mainprop}
\rho(v)=\bigoplus_{1\leq i \leq k}  \rho(r(e_i)).
\end{equation}
Let $d(v):=\dim_\K \rho(v)$, so that $\rho(v)\cong \K^{d(v)}$, as a $\K$-vector space.  Then the equation~\ref{mainprop} gives 
\begin{equation}\label{mainprop2}
d(v)=\sum_{1\leq i \leq k}  d(r(e_i)). \end{equation}

Clearly if $\rho \in \fRep(\hat E,  \rr)$, then $d(v)$ is finite, for all $v\in E^0$. We will use (\ref{mainprop2}) throughout this section.

\begin{nota}\label{notenan}
 Let $E$ be a graph and $\rho$ a representation in $\fRep(E,r)$. For each $v\in E^0$, denote the dimension $\dim_\K(\rho(v))$ of $\rho(v)$ as a $\K$-vector space by $d^\rho(v)$. That is, 
 \begin{align*}
     d^\rho:E^0 &\longrightarrow \mathbb{N}\\
     v &\longmapsto \dim_\K(\rho(v)).
 \end{align*}
For $H\subseteq E^0$ and $\rho:\overline{E}\rightarrow \fvc \K$ a representation in $\fRep(\overline{E}, \overline{r})$, denote $\rho_H:=\rho|_{ \overline{H }}$, where $\overline{H}:=\{w_i \mid w\in H, i\in \mathbb{Z}\}$.  If $H=\{w\}$, we write $\rho_{\{w\}}:=\rho_w$. For $v\in E^0$, we write 
$$d^\rho_v:= d^{\rho_{v}} :{\{v_i \}_{i\in \mathbb{Z}}}\rightarrow \mathbb{N}.$$ 
Without ambiguity, we shall omit the indication of the representation $\rho$ and we write $d$ and $d_v$ for $d^\rho$ and $d^\rho_v$, respectively.
\end{nota}

We can summarise the above argument as the following corollary. 
\begin{cor}\label{corollarydimension}
    Let $E$ be a finite graph. Then for any representations $\varphi \in rep({\hat{E}}, {r_{ck}}) $ and $\rho \in  rep(\overline{\hat{E}}, \overline{r_{ck}})$, the mappings $d^{\varphi}:E^0\rightarrow \mathbb{N}$ and  $d^{\rho}:\overline{\hat{E}}^0\rightarrow \mathbb{N}$ satisfy \begin{center}$d^\varphi (v) =\sum\limits_{e\in s^{-1}(v)} d^\varphi(r(e))$ ~and~ $d^\rho (v_i) =\sum\limits_{e\in s^{-1}(v)} d^\rho(r(e)_{i+1}),$
\end{center}
for every regular vertex $v\in E^0$ and $i\in \mathbb{Z}$.

\end{cor}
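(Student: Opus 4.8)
The first identity requires no new work: it is precisely equation~(\ref{mainprop2}). Indeed, writing $s^{-1}(v)=\{e_1,\dots,e_k\}$ for a regular vertex $v$, the splitting $\rho(v)=\bigoplus_{1\le i\le k}\rho(r(e_i))$ established in~(\ref{mainprop}) becomes $d^\varphi(v)=\sum_{e\in s^{-1}(v)}d^\varphi(r(e))$ upon taking $\K$-dimensions, so for this half I would simply invoke the computation carried out just before Notation~\ref{notenan}.

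For the graded identity the plan is to run the very same splitting argument, but now inside the covering graph $\overline{\hat E}$ equipped with the lifted relations $\overline{\rr}$. First I would pin down, for a fixed regular vertex $v\in E^0$ with $s^{-1}(v)=\{e_1,\dots,e_k\}$ and a fixed $i\in\mathbb Z$, which lifted edges are incident to the vertex $v_i$. Reading off the description~(\ref{nmehr65}) of $\overline{\hat E}^1$, the real edge $e_j$ (of degree $1$) lifts to $(e_j)_{i+1}\colon v_i\to r(e_j)_{i+1}$, while the ghost edge $e_j^*$ (of degree $-1$) lifts to $(e_j^*)_{i+1}\colon r(e_j)_{i+1}\to v_i$; both join the same pair $v_i$, $r(e_j)_{i+1}$ in opposite directions, and the composite $(e_j)_{i+1}(e_j^*)_{i+1}$ is a closed path based at $v_i$.

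Next I would record the lifted Cuntz--Krieger relations at level $i$: relation (3) lifts to $(e_j^*)_{i+1}(e_{j'})_{i+1}=\delta_{jj'}\,r(e_j)_{i+1}$ and relation (4) lifts to $\sum_{j}(e_j)_{i+1}(e_j^*)_{i+1}=v_i$, the degree bookkeeping ($\deg e=1$, $\deg e^*=-1$) being exactly what keeps both sides of the latter at the same index $i$. With these in hand I would define, for $\rho\in\fRep(\overline{\hat E},\overline{\rr})$, the pair of maps
\begin{equation*}
f\colon \rho(v_i)\longrightarrow \bigoplus_{1\le j\le k}\rho\big(r(e_j)_{i+1}\big),\qquad
g\colon \bigoplus_{1\le j\le k}\rho\big(r(e_j)_{i+1}\big)\longrightarrow \rho(v_i),
\end{equation*}
by $f(x)=\big(\rho((e_j)_{i+1})(x)\big)_j$ and $g\big((x_j)_j\big)=\sum_j\rho((e_j^*)_{i+1})(x_j)$, and then verify $gf=\id$ and $fg=\id$ verbatim as in the derivation of~(\ref{mainprop}), now quoting the two lifted relations in place of (3) and (4). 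This gives $\rho(v_i)\cong\bigoplus_{e\in s^{-1}(v)}\rho(r(e)_{i+1})$, whence $d^\rho(v_i)=\sum_{e\in s^{-1}(v)}d^\rho(r(e)_{i+1})$ after taking dimensions.

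The only genuinely delicate point I anticipate is the index bookkeeping on $\overline{\hat E}$: one must be sure the lifted real and ghost edges at $v_i$ both land on the common vertex $r(e)_{i+1}$, and that the lifted relation $\sum_j(e_j)_{i+1}(e_j^*)_{i+1}$ closes up at $v_i$ rather than at a shifted copy. Once the indices are matched, the verification $gf=\id=fg$ is word-for-word the ungraded one and introduces no new idea. As a sanity check, or as an alternative route that sidesteps the covering-graph combinatorics altogether, I could transport the statement through the equivalence $\Gr L_\K(E)\cong\Rep(\overline{\hat E},\overline{\rr})$ of~(\ref{xxxyyy}): for a graded module $X=\bigoplus_i X_i$ the maps $x\mapsto(xe_j)_j$ and $(y_j)_j\mapsto\sum_j y_je_j^*$ yield $X_iv\cong\bigoplus_j X_{i+1}r(e_j)$ directly from (3) and (4), and $\rho_X(v_i)=X_iv$ turns this into the claimed formula.
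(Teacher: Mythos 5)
Your proposal is correct and follows essentially the same route as the paper: the paper presents this corollary as a summary of the splitting argument $\rho(v)\cong\bigoplus_{e\in s^{-1}(v)}\rho(r(e))$ carried out just before Notation~\ref{notenan}, with the graded case understood to follow by running the identical $f$, $g$ computation in $\overline{\hat E}$ with the lifted Cuntz--Krieger relations. Your careful check of the index bookkeeping (that the lifts of $e_j$ and $e_j^*$ at $v_i$ both meet $r(e_j)_{i+1}$, so the lifted relation (4) closes up at $v_i$) simply makes explicit what the paper leaves implicit, and is accurate.
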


\begin{prop}\label{firstmanin}
Let $E$ be a finite graph such that the sources are isolated vertices.  Then $\rho:\hat E \rightarrow \fvc \K$  in  $\fRep(\hat E,  \rr)$ has the form 
\begin{equation}\label{finrep11}
\rho(v)=
 \left\{ \begin{array}{ll}
 \K^{n_C} & \textrm{if $v\in C$ a maximal cycle},\\
  \K^{n_v} & \textrm{if } v  \textrm{ is an isolated vertex},\\
 0 & \textrm{otherwise,}
  \end{array} \right.
\end{equation}
for fixed $n_C\in \mathbb N$ for each maximal cycle $C$  and $n_v\in \mathbb N$. 
\end{prop}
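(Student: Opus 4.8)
The plan is to work entirely with the dimension function $d(v):=\dim_\K\rho(v)$, for which the Cuntz--Krieger relations have already been distilled into the single identity (\ref{mainprop2}): for every regular (equivalently, non-sink) vertex $v$ with $s^{-1}(v)=\{e_1,\dots,e_k\}$ one has $d(v)=\sum_{i=1}^k d(r(e_i))$. Since $\rho$ takes values in $\fvc\K$, each $d(v)$ is finite, and since a $\K$-vector space is determined up to isomorphism by its dimension, it suffices to compute $d(v)$ and show it takes the stated values. Two immediate consequences of (\ref{mainprop2}) I would record first are: (a) \emph{monotonicity}, namely $d(v)\ge d(r(e))$ for every non-sink $v$ and every $e\in s^{-1}(v)$, so $d$ is non-increasing along forward paths through non-sink vertices; and (b) that isolated vertices are completely decoupled from the relation, as they neither emit nor receive edges, so $d$ is unconstrained on them --- this accounts for the middle case $\K^{n_v}$.

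The key computation is a summation around a cycle. First I would prove: for every cycle $C=w_0\to w_1\to\cdots\to w_{m-1}\to w_0$, the function $d$ is constant on $C$, and every exit edge of $C$ (an edge from some $w_j$ to a vertex outside $C$) has target of dimension $0$. Indeed each $w_j$ is a non-sink, so (\ref{mainprop2}) gives $d(w_j)=d(w_{j+1})+\varepsilon_j$, where $\varepsilon_j\ge 0$ collects the contributions of the non-cycle edges out of $w_j$; summing over $j$ and reindexing makes the cyclic terms cancel, forcing $\sum_j\varepsilon_j=0$, hence every $\varepsilon_j=0$. Thus $d(w_j)=d(w_{j+1})$ for all $j$ (so $d|_C$ is the desired constant $n_C$), and each exit target has dimension $0$. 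Combined with monotonicity this already makes non-isolated sinks invisible: if $z$ is a sink receiving an edge, then (as the graph is finite and, by hypothesis, no vertex that emits an edge is a source) tracing edges backwards from $z$ produces a cycle $C$ with a path to $z$; that path leaves $C$ through an exit, landing on a vertex of dimension $0$, and monotonicity propagates $0$ downstream to $z$, so $d(z)=0$.

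With these tools the support of $d$ can be located. Suppose $v$ is non-isolated with $d(v)>0$; I claim $v$ lies on a maximal cycle. Since non-isolated sinks carry dimension $0$, the vertex $v$ is a non-sink, and by (\ref{mainprop2}) some out-neighbour again has positive dimension; iterating produces a forward path of positive-dimension vertices which, as we never meet a sink, must repeat a vertex and hence run into a cycle $C'$ with $d|_{C'}>0$. This $C'$ must be maximal: otherwise another cycle $D$ would connect to it, and the path $D\to C'$ would leave $D$ through a dimension-$0$ exit and reach $C'$ with dimension $0$ by monotonicity, contradicting $d|_{C'}>0$. It remains to upgrade ``$v$ connects to $C'$'' to ``$v\in C'$'', and this is exactly where the hypothesis that sources are isolated is essential: I would show a maximal cycle admits no path into it from outside. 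If some $x\notin C'$ connected into $C'$, then $x$ emits an edge and so, by the hypothesis, is not a source; tracing backwards from $x$ in the finite graph yields a cycle connecting to $C'$ through $x\notin C'$, a cycle different from $C'$, contradicting maximality. Hence nothing outside $C'$ connects to $C'$; since $v$ connects to $C'$, we get $v\in C'$, and $d(v)=n_{C'}$. Setting $n_C:=d|_C$ for each maximal cycle $C$ and $n_v:=d(v)$ for each isolated vertex $v$ gives precisely (\ref{finrep11}).

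I expect the main obstacle to be this final step --- promoting ``$v$ reaches a maximal cycle'' to genuine membership in that cycle. The cycle-summation identity and monotonicity are formal and robust, but to exclude a positive-dimensional vertex sitting strictly upstream of a maximal cycle one genuinely needs the absence of honest sources, fed back through finiteness of the graph and the disjointness built into the definition of a maximal cycle; the only delicate bookkeeping is ruling out the degenerate case $D=C'$ in the backward trace, which I would dispatch by noting it would produce a cycle through $x\notin C'$ and hence a second cycle connecting to $C'$.
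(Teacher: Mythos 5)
Your proposal is correct, and its core coincides with the paper's: both arguments run entirely on the dimension function $d$, using the monotonicity $d(s(e))\ge d(r(e))$ coming from Corollary~\ref{corollarydimension}, constancy of $d$ on cycles, vanishing of $d$ on targets of exit edges, and backward tracing through predecessors (enabled by finiteness together with the hypothesis that sources are isolated). The differences are organizational. For constancy-plus-exit-vanishing you sum the relation (\ref{mainprop2}) around the cycle and cancel, whereas the paper first gets constancy from monotonicity around the cycle and then kills exits by applying the relation at a single vertex; these are interchangeable. More substantively, your endgame is the contrapositive of the paper's: you follow positive dimensions \emph{forward} from $v$ to a cycle $C'$, show $C'$ is maximal, and then invoke a separate structural lemma --- under the standing hypotheses, no vertex outside a maximal cycle has a path into it --- to upgrade ``$v$ connects to $C'$'' to $v\in C'$. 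The paper instead traces \emph{backwards} from $v$ itself to produce a cycle $B$ upstream of $v$, and propagates zero forward from an exit of $B$ (``an easy induction''). Your route costs an extra lemma but isolates a reusable fact about the geometry of maximal cycles, and you correctly flag where its proof is delicate: the backward trace from the entering vertex $x$ may return $C'$ itself, in which case one must extract, from the resulting closed path through $x\notin C'^0$, a cycle distinct from $C'$ that still connects to $C'$ (and likewise handle upstream cycles sharing vertices with the target cycle). That bookkeeping is precisely what the paper's ``easy induction'' glosses over, so your version makes explicit a subtlety the paper leaves implicit; both arguments are sound.
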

\begin{proof}

 First note that if $e\in E^1$, then Corollary \ref{corollarydimension} implies that $d(s(e))\geq d(r(e))$. Let $v$ be a vertex on a cycle $C$ and write $C=vc_1c_2\dots c_nv$, where $c_i\in E^1$. Then $d(v) \geq d(r(c_1)) \geq d(r(c_2)) \geq \cdots \geq d(v)$. Therefore all the vertices on the cycle $C$ have the same dimension (either zero, infinite or a non-zero natural number). Since we are dealing with finite representations $\rho$, the dimensions of vertices are all finite. Suppose the cycle $C$ has an exit, i.e., an edge $f$ such that $s(f)\in C$ and $f\not = c_i$, $1 \leq i \leq n$.  Suppose $s(f)=s(c_l)$. Then by Corollary \ref{corollarydimension}, $d(s(c_l)) \geq d(r(c_l))+ d(r(f))$. Since the dimensions on the cycles are finite and equal, $d(r(f))=0$. Next suppose the vertex $v$ is not on a maximal cycle. Thus there is a cycle $B$ with a path from $B$ to $C$.  Since the dimension of exit edges from $B$ has to have dimension zero, an easy induction implies that $d(v)=0$. On the other hand, since there is no relations on the isolated vertices, they can be assigned any finite dimensional $\K$-vector space. Putting these together we conclude that if $\rho \in \fRep(\hat E,\rr)$, then  it has to be of the form of (\ref{finrep11}). 
\end{proof}

\begin{example}\label{whenmy}
Note that two non-isomorphic elements $\rho, \sigma \in \fRep(\hat E,\rr)$ can have the same form described in (\ref{finrep11}). As an example, let $E$ be a graph with one vertex $v$ and one loop $e$ (thus $\hat E$ consists of $v$ and two loops $e,e^*$). Let $\rho(v)=\K^2$ with the linear transformation 
$\rho(e)=\begin{pmatrix}
1 & 1\\
0 & 1\\
\end{pmatrix}:\K^2\rightarrow \K^2$ and $\rho(e^*)=\rho(e)^{-1}.$ Furthermore, 
let $\sigma(v)=\K^2$ with the linear transformation $\sigma(e)=\begin{pmatrix}
1 & 0\\
0 & 1\\
\end{pmatrix}$
and $\sigma(e^*)= \sigma(e)^{-1}$. We have $\rho,\sigma\in \fRep(\hat E,\rr)$ with the same representation of the form (\ref{finrep11}), however they are not isomorphic in the category $\fRep(\hat E,\rr)$, as there is no invertible $2\times 2$ matrix $A\in \M_2(\K)$ such that  $A \rho(e) =\sigma(e) A$.

On the other hand, consider the graph $E$ as above (only one loop) and the graph $F$ consisting of a single isolated vertex. It is easy to observe that 
$\Rep(\hat F,\rr)\cong (\vc \K, \varnothing)$ and $\fRep(\hat F,\rr)\cong (\fvc \K, \varnothing)$ (as there is no relations on the graph). On the other hand $\Rep(\hat E,\rr)\cong (\vc \K,iso)$ and 
$\fRep(\hat E,\rr)\cong (\fvc \K,iso)$ where $iso$ consists of all (appropriate) bijective linear transformations. Therefore the categories $\Rep(\hat E,\rr)$ and $\Rep(\hat F,\rr)$ (and similarly the categories $\fRep(\hat E,\rr)$ and $\fRep(\hat F,\rr)$) are not equivalent. This shows that the categories $\Modd L_\K(E)$ and $\Modd L_\K(F)$ are not equivalent (similarly $\fModd L_\K(E) \not \cong \fModd L_\K(F)$), although their corresponding graph monoids are isomorphic, i.e.,  $M_E\cong M_F$.  
\end{example}

Our goal is to describe locally finite dimensional graded representations of Leavitt path algebras. We first describe the finite dimensional graded representations similar to Proposition~\ref{firstmanin}. The proposition shows the assumption of finite condition on the graded representations is very restrictive. 

\begin{prop}\label{firstmanin2}
Let $E$ be a finite graph such that the sources are isolated vertices.  Then $\rho:\overline{\hat E} \rightarrow \fvc \K$  in  $\fRep(\overline{\hat E},  \overline{\rr})$ has the form 
\begin{equation}\label{finrep}
\rho(v_i)=
 \left\{ \begin{array}{ll}
  \K^{n_{v_i}} & \textrm{if } v  \textrm{ is an isolated vertex},  \\
 0 & \textrm{otherwise,}
  \end{array} \right.
\end{equation}
where all but finite number of $n_{v_i}\in \mathbb N$ is non-zero.
\end{prop}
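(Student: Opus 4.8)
The plan is to run the same dimension-counting argument as in Proposition~\ref{firstmanin}, but now carried out on the covering graph $\overline{\hat E}$ rather than on $\hat E$ itself. Fix $\rho \in \fRep(\overline{\hat E}, \overline{\rr})$ and write $d(v_i) = \dim_\K \rho(v_i)$; since $\rho$ is a \emph{finite} representation, all of these dimensions are finite and only finitely many are nonzero. The key structural tool is the lifted Cuntz--Krieger relation recorded in Corollary~\ref{corollarydimension}, namely $d(v_i) = \sum_{e\in s^{-1}(v)} d(r(e)_{i+1})$ for every regular vertex $v$ and every $i\in\mathbb Z$. In particular, for a single real edge $e:u\to v$ this already forces $d(u_{i}) \geq d(v_{i+1})$, so dimensions can only decrease (weakly) as we move forward along real edges in the covering graph.

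The main point to extract is that a vertex lying over a cycle must have dimension zero. First I would show that the covering graph $\overline{\hat E}$ contains no cycles in the usual sense, so the argument cannot terminate by finding a loop of equal dimensions as it did in Proposition~\ref{firstmanin}. Indeed, a cycle $C = v c_1 \cdots c_n v$ in $E$ lifts to an infinite forward chain $v_i, (r(c_1))_{i+1}, (r(c_2))_{i+2}, \dots$ in $\overline{\hat E}$ whose indices strictly increase, returning to the \emph{vertex} $v$ but never to the same indexed vertex $v_i$. Along this infinite chain Corollary~\ref{corollarydimension} gives a weakly decreasing sequence of natural numbers $d(v_i) \geq d(v_{i+n}) \geq d(v_{i+2n}) \geq \cdots$; but since $\rho$ is finite, $d(v_{i+kn}) = 0$ for all large $k$, and then pulling back along the (weak) inequalities forces $d(v_i)=0$ for every vertex $v$ on a cycle and every index $i$. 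By the same induction as in Proposition~\ref{firstmanin} — following paths from cycles outward — every vertex that \emph{connects to} a cycle also has dimension zero in every shift.

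It then remains to handle the vertices that connect to no cycle. By hypothesis the sources are isolated vertices, and a finite acyclic graph whose non-isolated structure has been killed off leaves only the isolated vertices carrying nonzero dimension: an isolated vertex $v$ emits and receives no edges, so in $\overline{\hat E}$ the vertices $v_i$ are subject to no relations and $\rho(v_i)$ may be assigned any finite-dimensional space $\K^{n_{v_i}}$ freely. This yields exactly the form~(\ref{finrep}), with the finiteness of $\rho$ guaranteeing that only finitely many $n_{v_i}$ are nonzero.

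I expect the main obstacle to be the careful bookkeeping that distinguishes this from Proposition~\ref{firstmanin}: in $\hat E$ a cycle closes up and one immediately concludes that all its vertices share a common finite dimension, whereas in $\overline{\hat E}$ the cycle unwinds into an infinite strictly-indexed chain, and one must use the \emph{global} finiteness of $\rho$ (only finitely many nonzero $d(v_i)$) to force the dimensions along that chain down to zero. The subtlety is that the monotonicity runs in the direction of increasing index, so the vanishing must be propagated backwards along the inequalities, and one should verify that the presence of ghost edges (which reverse the index shift) does not create a compensating increase; here the fact that $ee^* $ and $e^*e$ reduce to vertices via the relations $\rr$, together with the decomposition~(\ref{mainprop}), keeps the ghost contributions consistent with the real-edge counting.
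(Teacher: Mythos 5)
Your argument has a genuine logical gap at its central step. You claim that along the lifted chain of a cycle the weakly decreasing sequence $d(v_i) \geq d(v_{i+n}) \geq d(v_{i+2n}) \geq \cdots$, being eventually zero, can be ``pulled back'' to conclude $d(v_i)=0$. This inference is false: a weakly decreasing sequence of natural numbers that is eventually zero need not be identically zero (e.g.\ $5,3,0,0,\dots$), and when the cycle has exits the inequalities coming from Corollary~\ref{corollarydimension} can genuinely be strict, so nothing rules such a sequence out a priori. Zero does not propagate backwards along ``$\geq$''; what propagates backwards is \emph{non}-vanishing. You flag exactly this point in your closing paragraph (``the vanishing must be propagated backwards along the inequalities'') as the expected subtlety, but that is precisely the step that cannot be carried out as stated, and no argument for it is supplied. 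A second, smaller issue: your outward induction speaks of vertices that \emph{connect to} a cycle, but forward propagation of zeros only reaches vertices the cycle connects \emph{to}; the remaining vertices are not handled, only gestured at via ``a finite acyclic graph whose non-isolated structure has been killed off.''

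The paper's proof avoids cycles entirely and runs the correct backward propagation: since $\rho$ is a finite representation, the set of pairs $(u,i)$ with $d(u_i)\neq 0$ is finite, so one may choose the \emph{smallest} index $i$ at which some non-isolated vertex $u$ has $d(u_i)\neq 0$. Because sources are isolated, $u$ is not a source, so it receives an edge $e\colon w \to u$, and Corollary~\ref{corollarydimension} applied at $w$ gives $d(w_{i-1}) \geq d(u_i) > 0$; moreover $w$ emits $e$, hence is not a sink, hence is not isolated. This contradicts the minimality of $i$, so every non-isolated vertex has $d(u_i)=0$ for all $i$, and isolated vertices are unconstrained. Your approach could be repaired in the same spirit: propagate non-vanishing backwards around a cycle (each cycle vertex receives an edge from its predecessor) to produce nonzero dimensions at arbitrarily negative indices, contradicting finiteness; then push zeros forward from cycle vertices using the equality in Corollary~\ref{corollarydimension}; and finally observe that, sources being isolated, every non-isolated vertex in a finite graph is reachable from a cycle. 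But as written, the cycle step is invalid and the reduction to isolated vertices is incomplete.
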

\begin{proof}
Let $\rho \in \fRep(\overline{\hat E},  \overline{\rr})$.  Since $\dim_\K(\bigoplus_{u\in E^0, i\in \mathbb Z}  \rho(u_i)) < \infty$, there is an $l\in \mathbb Z$ such that $\dim_\K\rho(u_k)=0$ for $k<l$. Choose $i$ to be the smallest integer such that  $\dim_\K (\rho(u_i))\not = 0$ among all non-isolated vertices $u$. 
Since $u$ is not an isolated vertex (thus not a source), there is an edge $e$ with $s(e)=w$ and $r(e)=u$. Note that since $\overline{\rr}$ are Cuntz-Krieger's relations, we have the equations~(\ref{mainprop}) in  $\overline {\hat E}$. Thus $d(w_{i-1})\geq d(u_i)$, which follows that $\dim_\K (\rho(w_{i-1}))\not = 0$. But this is a contradiction. The proposition now follows. 
\end{proof}

Recall that the full subcategory of  $\mathbb{Z}$-locally finite representations of $E$, denoted by  $\ffRep(\overline E,\overline r)$, consists of those functors $\rho  \in \Rep(\overline E,\overline r)$ such that $\dim_\K(\bigoplus_{u\in E^0}  \rho(u_i)) < \infty$, for every $i\in \mathbb Z$ which corresponds to  $\fGr[\mathbb{Z}] A_\K(E,r)$, the category of  locally finite graded representation of the algebra $A_\K(E,r)$ (Proposition~\ref{mainlemma1}). Specialising to the case of Leavitt path algebras, we look into the category $\fGr[\mathbb{Z}] A_\K(\hat{E},r_{ck})=\fGr[\mathbb{Z}]L_{\K}(E)$ by considering the category of  locally finite representations over the covering graph of the double graph $\ffRep(\overline {\hat{E}},\overline{r_{ck}})$.

The dimension function $d$ (see Notations~\ref{notenan}) can be viewed as a \emph{vector space distribution} of the representation. Note again that differing in the morphisms, it is possible for two distinct representations to have the same dimension functions as seen in Example \ref{whenmy}.


\begin{deff}
    A mapping $ d:\{ u_i\}_{i\in \mathbb{Z}}\rightarrow \mathbb{N}$ is said to be eventually trivial if there exists $i\in \mathbb{Z}$ such that for all $j\geq i$, $d(u_j)=0$. 
\end{deff}

In preparation for the main Theorem~\ref{mainthfer}, we need to establish some more statements.

\begin{lemma}
 \label{lemma6}
Let $E$ be a finite graph with no sources and $v\in E^0$. Then there exists a cycle $C$ such that $C^0\geq v$.  
\end{lemma}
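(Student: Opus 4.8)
The plan is to exploit the hypothesis that $E$ is finite with no sources by running a path \emph{backwards} through the graph and invoking finiteness to force a repetition of vertices. Concretely, starting from the given vertex $v\in E^0$, I would build an infinite sequence of vertices $v=v_0, v_1, v_2, \dots$ in which each $v_{n+1}$ is a source of some edge landing at $v_n$; that is, at each step I choose $v_{n+1}\in E^0$ and an edge $\alpha_{n+1}\in E^1$ with $s(\alpha_{n+1})=v_{n+1}$ and $r(\alpha_{n+1})=v_n$. The crucial point is that such a predecessor always exists: since $E$ has no sources, every vertex receives at least one edge, so $r^{-1}(v_n)\neq\varnothing$ and we can always extend the backward path. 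This produces an arbitrarily long (indeed infinite) backward path $\cdots\alpha_2\alpha_1$ landing at $v$.

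Next I would apply the pigeonhole principle. Since $E^0$ is a finite set, the infinite sequence $(v_n)_{n\ge 0}$ must repeat some vertex: there exist indices $m<n$ with $v_m=v_n$. The segment of the backward path between these two occurrences, namely the edges $\alpha_n\alpha_{n-1}\cdots\alpha_{m+1}$ read in the forward direction, forms a closed path based at the vertex $w:=v_m=v_n$, of length $n-m\ge 1$. From this closed path I would extract an honest cycle: recall (from \S\ref{graphsec}) that a cycle requires the sources of its edges to be distinct, whereas a closed path need not satisfy this. A standard argument shortens any closed path to a cycle by locating the first repeated vertex along it and taking the subloop, so the closed path contains a genuine cycle $C$ with a vertex, say $u$, lying on it.

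Finally I would verify the required connectivity $C^0\ge v$. By construction the vertex $u\in C^0$ sits on the backward path we built from $v$, so there is a forward path from $u$ down to $v_0=v$ (the relevant tail $\alpha_k\cdots\alpha_1$ of the backward path). Hence $u\ge v$, which gives $C^0\ge v$ in the sense defined in the preliminaries (the cycle $C$ connects to $v$).

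I expect the main obstacle to be purely bookkeeping rather than conceptual: one must be careful that the backward path can always be extended (this is exactly where ``no sources'' is used, and it is essential), and that when passing from a closed path to a cycle one correctly identifies a vertex of the resulting cycle that still connects forward to $v$. A slightly cleaner alternative that avoids the explicit closed-path-to-cycle reduction would be to choose the backward sequence greedily and take $m<n$ to be the \emph{first} repetition, so that $v_{m+1},\dots,v_n$ are already distinct and the segment is automatically a cycle; I would likely phrase the argument this way to keep the extraction of $C$ immediate.
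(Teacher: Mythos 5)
Your proposal is correct and follows essentially the same route as the paper's proof: walk backwards from $v$ using the no-sources hypothesis, invoke finiteness of $E^0$ to force a repetition, and take the resulting segment as a cycle connecting to $v$. Your extra care in reducing a closed path to a genuine cycle (or, equivalently, taking the \emph{first} repetition) is a point the paper glosses over, so your version is if anything slightly more complete.
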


\begin{proof}
    Let $E$ be a finite graph with no sources and $v\in E^0$. Then there exists an edge $e_1$ with $r(e_1)=v$. Since $E$ has no sources, there exists an edge $e_2$ with $r(e_2)=s(e_1)$. Continuing, we can obtain edges $e_1,e_2,\cdots$ such that $r(e_i)=s(e_{i-1})$ for each $i=2,\cdots$ with $r(e_1)=v$. Since $E$ is finite and $E$ has no sources, there exists $n$ with $s(e_n)=r(e_j)$ for some $j=1, 2,\cdots,n$. Hence, $C=e_ne_{n-1}\cdots e_j$ is a cycle and $C$ is connected to $v$.
\end{proof}
Recall that for a graph $E$, a cycle $C$ is said to be maximal if no other cycle connects to
$C$ (in particular, a maximal cycle is disjoint from all other cycles). Hence, by a non-maximal cycle, we mean a cycle $C$ where there is another cycle $D$ and a path from $D$ to $C$. The set of vertices in $E$ lying in some maximal cycle is denoted by $C(E)$, that is,   $C(E)=\{v\in E^0 \mid v\in C^0, \textnormal{~for~some~maximal~cycle~} C\} $.

\begin{thm}
    \label{theorem7b}
Let $E$ be a finite graph with no sources and $\rho$ a representation in $\ffRep(\overline{\hat{E}}, \overline{r_{ck}})$. Then for every $v\not \in C(E)$, 
\begin{align*}
    d_v:{\{v_i \}_{i\in \mathbb{Z}}}&\longrightarrow \mathbb{N}\\
    v_i &\longmapsto \dim_\K(\rho(v_i)).
\end{align*}
is eventually trivial. 
\end{thm}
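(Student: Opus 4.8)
The plan is to show that for a vertex $v\notin C(E)$ the dimension function $d_v$ vanishes for all sufficiently large indices. The key structural fact I would exploit is the dimension recursion of Corollary~\ref{corollarydimension}, namely $d(v_i)=\sum_{e\in s^{-1}(v)} d(r(e)_{i+1})$ at every regular vertex $v$, together with the local finiteness hypothesis \eqref{jfhyt5}, which says that for each fixed $i\in\mathbb Z$ the total dimension $\dim_\K\big(\bigoplus_{u\in E^0}\rho(u_i)\big)$ is finite. Since $E$ has no sources, Lemma~\ref{lemma6} guarantees that every vertex is reached from a cycle, so the relevant obstruction to eventual triviality comes entirely from cycles, and by definition a vertex outside $C(E)$ is not on any maximal cycle.

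The first step is to reduce to understanding the behaviour along cycles. I would argue that if $d_v$ is \emph{not} eventually trivial, then by the recursion $d(v_i)=\sum_{e\in s^{-1}(v)}d(r(e)_{i+1})$, non-vanishing of $d(v_i)$ for arbitrarily large $i$ forces, by tracing back along edges, an infinite path on which the dimension stays positive; since $E$ is finite this path must repeat a vertex and hence meet a cycle $C$ with $d$ positive on $C^0$ at infinitely many indices. The second step is to analyse such a cycle. Writing $C=c_1\cdots c_k$ based at a vertex $w$, the recursion gives $d(w_i)\ge d(w_{i+k})$ by composing the $k$ edge-relations around the cycle (each regular vertex's recursion bounds its dimension below by the receiving vertex's dimension one shift later, as in the inequality $d(s(e))\ge d(r(e))$ already used in Proposition~\ref{firstmanin}). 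Thus along the cycle the sequence $\big(d(w_{i+nk})\big)_{n\ge 0}$ is non-increasing in $\mathbb N$, hence eventually constant, equal to some $m$; and if $C$ has an exit edge $f$ at $w$, the sharper inequality $d(w_i)\ge d(r(f)_{i+1})+d(\text{next cycle vertex}_{i+1})$ forces the constant $m$ to be $0$ once the exit contributes, giving eventual triviality on that cycle.

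The third step is to upgrade this cycle analysis to a genuine induction organised by the pre-ordering on cycles and sinks described in \S\ref{graphsec}. A maximal cycle has no exit (it is disjoint from all other cycles and nothing connects to it), so on a maximal cycle the sequence $d(w_{i+nk})$ is merely non-increasing and may stabilise at a positive value — but these are exactly the vertices in $C(E)$, which the statement excludes. For a \emph{non-maximal} cycle $C$ there is another cycle $D$ connecting to $C$, equivalently $C$ lies below something in the pre-order; I would induct downward along this finite partial order. A cycle $C$ that is not maximal either has an exit that reaches a strictly higher cycle (so the inductively-known eventual triviality upstream feeds through the exit term in the recursion and kills the constant $m$), or else it must eventually route all its dimension into maximal cycles and sinks that lie in $C(E)$ or are sinks. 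Finally, for a vertex $v$ that lies on no cycle at all, the recursion expresses $d(v_i)$ as a finite sum of $d$-values at strictly later indices on vertices that are ``closer'' to cycles and sinks in $T(v)$, and a straightforward induction on the finite set $T(v)$ using local finiteness yields eventual triviality.

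\textbf{Main obstacle.} The hard part will be the bookkeeping that converts the purely local, per-vertex recursion into a \emph{uniform} eventual vanishing: I must rule out the scenario in which positive dimension persists at arbitrarily large indices by perpetually migrating among the finitely many vertices of $T(v)$ without ever settling on a maximal cycle. The clean way to close this is to combine the finiteness of $T(v)$ with the local-finiteness bound \eqref{jfhyt5}: fix a large index window and track the total dimension supported on $T(v)\setminus C(E)$; the recursion together with the fact that every exit from a non-maximal cycle leaks dimension toward $C(E)$ (where, for $v\notin C(E)$, it leaves the region under consideration) shows this total is non-increasing in the shift and strictly decreases whenever it is positive and meets an exit, forcing it to reach $0$. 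Making this monotonicity argument precise — in particular identifying the correct finite invariant that strictly decreases — is the crux; the cycle-by-cycle inequalities above are routine once that global descent is set up correctly.
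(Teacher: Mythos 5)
Your local tools are the right ones --- the dimension recursion of Corollary~\ref{corollarydimension}, the monotonicity $d(w_i)\ge d(w_{i+k})$ around a cycle, stabilization of non-increasing sequences of non-negative integers, and the two-summand branch inequality --- and these are exactly the ingredients of the paper's proof. But your assembly of them rests on two false structural claims. First, ``a maximal cycle has no exit'' is wrong: maximality means no \emph{other cycle connects into} $C$, not that $C$ emits no edges outside itself (a loop with an edge to a sink, or with an edge to a lower cycle, is maximal and has an exit). Second, you reverse the pre-order in your induction: a non-maximal cycle is one that \emph{receives} a path from another cycle; it need not ``have an exit that reaches a strictly higher cycle.'' Because of this, the conclusion of your cycle analysis is also wrong: stabilization of the non-increasing sequence $d(w_{i+nk})$ forces the \emph{exit flow} $d(r(f)_{i+1})$ to vanish for all large $i$; it does not force the stabilized value $m$ to be $0$, and it does not give ``eventual triviality on that cycle'' (on a maximal cycle with an exit, $d$ can be a positive constant forever, which is consistent with the theorem since such vertices lie in $C(E)$). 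The correct mechanism --- and the one the paper uses --- is the opposite of yours: what kills $d$ at $v\notin C(E)$ is stabilization along a cycle strictly \emph{above} $v$, which chokes off the flow down to $v$ via the inequality $d(w_i)\ge d(w_{i+n})+d(v_{i+s})$, where $w$ is a vertex of the upstream cycle chosen so that a minimal path $q$ of length $s$ runs from $w$ to $v$ (minimality guarantees the first edge of $q$ differs from the cycle edge at $w$, so both terms really are summands of $d(w_i)$).

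The piece you explicitly defer as ``the crux'' is precisely what the paper supplies, and it is far simpler than the global invariant you sketch. Assume $d_v$ is not eventually trivial; partition the indices into residue classes modulo $n$ (the length of the upstream cycle); some class contains infinitely many $j$ with $d(v_j)\neq 0$; each such $j$ forces a strict drop $d(w_{j-s})>d(w_{j-s+n})$, and chaining these drops along the fixed residue class yields an infinite strictly decreasing sequence in $\mathbb N$ --- a contradiction. No induction over the cycle pre-order is needed, and no tracking of total dimension on $T(v)\setminus C(E)$: the monotonicity of that proposed invariant is never established (it would require every vertex of the region to receive an edge from inside the region, which can fail), and your forward induction on $T(v)$ for vertices lying on no cycle has no valid base case, since sinks are permitted here (only sources are excluded), have no successors, and their eventual triviality again requires the upstream-cycle argument rather than propagation from below. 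In short: you wrote down the paper's key inequality but drew the wrong conclusion from it (vanishing \emph{on} the cycle instead of vanishing of the flow \emph{leaving} it), and the resulting induction scheme would not close; redirecting the stabilization argument at the flow toward $v$ and adding the residue-class descent turns your sketch into the paper's proof.
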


\begin{proof}
    Let $v\not \in C(E)$ and suppose $d_v$ is not eventually trivial. Then for every $i\in \mathbb{Z}$, there exists $j\geq i$ such that $d_v(v_j)\neq 0$.

By Lemma \ref{lemma6}, there exists a maximal cycle $C$, say of length $n$, and a path $p$ with $s(p)\in C^0$ and $r(p)=v$. Since $v\not \in C(E)$,  $v\not \in C^0$. Let $w\in C^0$ such that there exists a path $q$ with $s(q)=w$ and $r(q)=v$ with
$$s:=|q|=\min \{|p| \mid p\in \textnormal{Path}
(E), \textnormal{~with~}s(p)\in C^0, r(p)=v\}.$$
Thus in $\overline{\hat{E}}$, for each $i\in\mathbb{Z}$,  there exists a path of length $s$ from $w_i$ to $v_{i+s}$ and a path of length $n$ from $w_i$ to $w_{i+n}$. Applying Corollary \ref{corollarydimension},  it follows that for each $i$, $d(v_{i+s})$ and $d(w_{i+n})$ are summands in one integer partition of $d(w_i)$. So 
$$d(w_i)\geq d(w_{i+n})\geq d(w_{i+2n})\geq \cdots,$$ and since $v\not \in C^0$, if $d(v_{i+s})\neq 0$,  then $d(w_i)>d(w_{i+n})$.

Now, we partition $\{v_i\}_{i\in \mathbb{Z}}$ into sets $V_i=\{v_{kn+i+s} \mid k\in \mathbb{Z}   \}$, $0\leq i\leq n-1$, and consider the restriction $d_{V_i}:=d_i$ of the dimension function $d:{\overline{\hat{E}}}^0\rightarrow \mathbb{N}$ to $V_i$.
If $d_i$ is eventually trivial for each $0\leq i\leq n-1$, then $d_v$ is eventually trivial, a contradiction. Hence, there exists $0\leq i\leq n-1$
such that $d_i$ is not eventually trivial. Thus, for all $k\in \mathbb{Z}$, there exists $k'>k$ such that $d_i( v_{k'n+i+s} )\neq 0$. We collect all such integer $k'$ and let $$S=\{k_j\mid d(v_{k_jn+i+s}) \neq 0, j=1,2,\dots\}.$$
Since $d_i$ is not eventually trivial, it follows that $S$ is infinite. Without loss of generality, set $k_j<k_{j+1}$ for all $j$. Then 
$$d(w_{k_jn+i})>d(w_{k_{j+1}n+i})> d(w_{k_{j+2}n+i})>\cdots.$$

 As dimensions take non-negative values, we get a contradiction.
\end{proof}

\begin{cor}
    \label{corollaryy2} Let $E$ be a finite graph with no sources and sinks, and $\rho$ a representation in $\ffRep(\overline{\hat{E}}, \overline{r_{ck}})$. Then for $v\not \in C(E)$, we have  $\rho(v_i)=0$, for all $i\in \mathbb Z$.
\end{cor}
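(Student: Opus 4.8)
The plan is to upgrade the conclusion of Theorem~\ref{theorem7b} --- that $d_v$ is eventually trivial for $v\notin C(E)$ --- to the stronger statement that $\rho(v_i)=0$ at \emph{every} level $i$, by exploiting the extra hypothesis that $E$ has no sinks. The crucial preliminary observation, which is where the no-sources hypothesis does its real work, is the following purely graph-theoretic claim: \emph{if $v\notin C(E)$, then no vertex of the tree $T(v)$ lies on a maximal cycle, i.e. $T(v)\cap C(E)=\varnothing$}. Granting this, the conclusion will follow by propagating the high-level vanishing backwards through the dimension recursion of Corollary~\ref{corollarydimension}, which is available at every vertex precisely because there are no sinks.

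First I would prove the claim. Suppose, for a contradiction, that $v\notin C(E)$ but $v\geq w$ for some $w$ lying on a maximal cycle $C$. Since $E$ is finite with no sources, Lemma~\ref{lemma6} produces a cycle $B$ with $B^0\geq v$, and hence $B^0\geq v\geq w\in C^0$, so $B$ connects to $C$. Maximality of $C$ forces $B=C$, so that $C^0\geq v$ as well. Thus $v$ is reachable from $C$ \emph{and} reaches $C$, i.e. $v$ lies in the same strongly connected component as the vertices of $C$. Concatenating a path from $v$ to $w$ with a path from $w$ back to $v$ gives a closed path based at $v$; a shortest such closed path is a cycle $B'$ through $v$. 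As $v\notin C^0$ we have $B'\neq C$, yet $B'$ connects to $C$ (via $v\geq w\in C^0$), again contradicting the maximality of $C$. This establishes $T(v)\cap C(E)=\varnothing$.

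Now fix $v\notin C(E)$. By the claim every $u\in T(v)$ satisfies $u\notin C(E)$, so Theorem~\ref{theorem7b} applies to each such $u$: there is $N_u$ with $d(u_j)=0$ for all $j\geq N_u$. Since $E^0$ is finite, $T(v)$ is finite, and we may set $N=\max_{u\in T(v)}N_u$. Because $E$ has no sinks, every vertex is regular, so iterating Corollary~\ref{corollarydimension} $m$ times yields, for any fixed $i_0\in\Z$,
\[
d(v_{i_0})=\sum_{\substack{|p|=m\\ s(p)=v}} d\bigl(r(p)_{i_0+m}\bigr),
\]
the sum ranging over all paths $p$ of length $m$ with source $v$. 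Each target $r(p)$ lies in $T(v)$, hence outside $C(E)$; choosing $m$ so large that $i_0+m\geq N$ makes every summand vanish. Therefore $d(v_{i_0})=0$, and since $i_0$ was arbitrary we conclude $\rho(v_i)=0$ for all $i\in\Z$.

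The main obstacle is the graph-theoretic claim of the second paragraph: it is what genuinely uses the no-sources hypothesis (the statement is false when $v$ is a source feeding into a maximal loop), and it is the step that guarantees the backward propagation never stalls upon entering a maximal cycle, where dimensions need not decay. Once the claim is in hand, the remaining argument is a routine combination of Theorem~\ref{theorem7b}, finiteness of $T(v)$, and the sink-free recursion of Corollary~\ref{corollarydimension}.
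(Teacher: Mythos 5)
Your proof is correct and follows essentially the same route as the paper's: both combine the eventual triviality of Theorem~\ref{theorem7b} with the no-sinks dimension recursion of Corollary~\ref{corollarydimension}, hinging on the fact that vertices reachable from a vertex outside $C(E)$ remain outside $C(E)$. The only differences are organizational --- you propagate vanishing forward via a length-$m$ path-sum expansion where the paper runs a backward one-level-at-a-time induction from a uniform threshold $t$, and you explicitly prove the graph-theoretic claim $T(v)\cap C(E)=\varnothing$ (via Lemma~\ref{lemma6} and maximality) that the paper only asserts in the weaker form ``Notice that $r(s^{-1}(v))\cap C(E)=\varnothing$'', so your write-up actually fills in a step the paper leaves unjustified.
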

\begin{proof}
     Let $v\not \in C(E)$, i.e., $v$ is not a vertex on a maximal cycle. Then by Theorem \ref{theorem7b}, $d_v:\{  v_i\}_{i\in \mathbb{Z}}\rightarrow \mathbb{N} $ is eventually trivial. Thus, there exists $j_{v}\in \mathbb{Z}$ such that $d_v(v_i)=0$, for all $i\geq j_{v}$. Since $E$ has no sinks, $r(s^{-1}(v))\neq \varnothing$. Notice that $r(s^{-1}(v))\cap C(E)=\varnothing$. Since $E$ has finite number of vertices, we can take $t:= \max \{j_{v}\in \mathbb{Z}:v\not \in C(E) \}$. Hence, for $v\not \in C(E)$, $d(u_t)=0$, for all $u\in r(s^{-1}(v))$.  Accordingly, we have
$$d(v_{t-1})=\sum_{u\in r(s^{-1}(v))}d(u_t)=0.$$
 Hence, for $v\not \in C(E)$, $d(v_{t-1})=0$. 
 Similar arguments would give us $d(v_{t-2})=0$. Hence, for all $v\not \in C(E)$, $d(v_i)=0$, i.e., $\phi(v_i)=0$, for all $i\in \mathbb Z$. 
\end{proof}

Recall the notation $\textnormal{CSP}(v)$,  the collection of all simple closed paths based on $v$, and $\text{CSP}(E)_{>1}$, from Section~\ref{graphsec}. Corollary \ref{proposition1-1}  is a direct consequence of Theorem \ref{theorem7b}. 
\begin{cor}
\label{proposition1-1}
 Let $E$ be a finite graph and  $\rho$ a representation in $\ffRep(\overline{\hat{E}}, \overline{r_{ck}})$. Then for every $v\in \textnormal{CSP}(E)_{>1}$, $d_v$ is eventually trivial .
\end{cor}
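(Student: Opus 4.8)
The plan is to show that any vertex $v \in \textnormal{CSP}(E)_{>1}$ lies outside $C(E)$, the set of vertices on maximal cycles, and then invoke Theorem \ref{theorem7b} directly. The key observation is that if $v$ admits two distinct simple closed paths based at $v$, then $v$ sits on (at least) two distinct closed paths, and therefore on at least one cycle $C$ that is \emph{not} maximal. First I would recall that having $|\textnormal{CSP}(v)| > 1$ means there exist two simple closed paths $\alpha \neq \beta$ based at $v$, each of which contains a cycle through $v$. The presence of two such paths forces an exit from any cycle through $v$: if $v$ lay on a maximal cycle $C$, then $C$ would be disjoint from all other cycles and would have no other cycle connecting to it, which is incompatible with the branching witnessed by two distinct simple closed paths at $v$.

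More precisely, I would argue as follows. Pick any cycle $C$ passing through $v$ (one exists, embedded in $\alpha$). Since there are two \emph{distinct} simple closed paths based at $v$, there is an edge emitted from some vertex on $C$ that does not lie on $C$ itself, i.e., $C$ has an exit. A cycle with an exit that returns to a cycle, or the existence of a second closed path through $v$, produces a cycle $D$ (contained in $\beta$, say) with $D \neq C$ and a path from $D$ back to $v \in C^0$; hence $D$ connects to $C$, so $C$ is not maximal. Since every cycle through $v$ fails to be maximal by this argument, $v \notin C^0$ for any maximal cycle, that is, $v \notin C(E)$.

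With $v \notin C(E)$ established, Theorem \ref{theorem7b} applies verbatim: for the representation $\rho \in \ffRep(\overline{\hat{E}}, \overline{r_{ck}})$, the dimension function $d_v \colon \{v_i\}_{i \in \mathbb{Z}} \to \mathbb{N}$ is eventually trivial, which is exactly the desired conclusion. Note that Theorem \ref{theorem7b} requires $E$ to have no sources, whereas the corollary as stated only assumes $E$ finite; I would therefore either carry over the no-sources hypothesis or verify that source vertices cannot lie in $\textnormal{CSP}(E)_{>1}$ anyway (a source emits edges but receives none, so it cannot be the base of a closed path at all, let alone two), so the relevant vertices are unaffected.

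The main obstacle I anticipate is the combinatorial step showing that $|\textnormal{CSP}(v)| > 1$ genuinely forces $v$ off every maximal cycle, rather than merely onto \emph{some} non-maximal cycle. The subtlety is that two distinct simple closed paths might share the same underlying cycle yet differ as paths, so I must be careful to extract a genuinely distinct second cycle $D$ connecting to $C$. I expect this is handled by the definition of \emph{simple} closed path (only one edge in $\alpha$ has source $v$): two distinct simple closed paths at $v$ must diverge after $v$, which supplies the exit and hence a distinct cycle connecting back, confirming non-maximality.
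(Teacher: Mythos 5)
Your proposal is correct and follows essentially the same route as the paper: the paper's entire proof is the remark that the corollary is ``a direct consequence of Theorem~\ref{theorem7b}'', leaving implicit exactly the bridge you supply, namely that $|\textnormal{CSP}(v)|>1$ forces $v\notin C(E)$ (any second simple closed path based at $v$ must use an edge off a given cycle $C$ through $v$, from which one extracts a cycle $D\neq C$ together with a path back to $v\in C^0$, contradicting maximality of $C$). One caution: of your two suggested fixes for the hypothesis mismatch, only the first (carrying the no-sources standing hypothesis over to the corollary, as the paper tacitly does) is adequate, since merely observing that $v$ itself is not a source does not license an appeal to Theorem~\ref{theorem7b}, whose hypothesis --- via Lemma~\ref{lemma6} --- is that the \emph{whole} graph $E$ has no sources.
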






\begin{thm}
    \label{theoremy1}
Let $E$ be a finite graph with no sources, $C$ a maximal cycle of length $n$ and $\rho$ a representation in $\ffRep(\overline{\hat{E}}, \overline{r_{ck}})$. 
Then there exists $t\in \mathbb{Z}$ such that for each $v_i\in C^0$ $(0\leq i<n)$, there exists $k_i \in \mathbb{N}$ with 
$$d((v_j)_s)=k_i, \text{ where } i\equiv j+t-s \pmod{n},$$
for any $s\geq t$.
\end{thm}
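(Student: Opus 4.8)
The plan is to run the dimension recurrence of Corollary~\ref{corollarydimension} around the cycle $C$, after first arranging that the exit edges of $C$ contribute nothing to the dimensions at large levels. Write $C^0=\{v_0,\dots,v_{n-1}\}$ with cycle edges $c_j\colon v_j\to v_{j+1}$, indices taken mod $n$. Every $v_j$ emits $c_j$ and is therefore regular, so Corollary~\ref{corollarydimension} gives, for all $s\in\mathbb Z$,
\[
d\big((v_j)_s\big)=d\big((v_{j+1})_{s+1}\big)+\sum_{f} d\big(r(f)_{s+1}\big),
\]
where $f$ ranges over the exit edges of $C$ with source $v_j$, i.e.\ the real edges out of $v_j$ other than $c_j$.

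First I would establish the geometric fact that every exit target lies outside $C(E)$, so that Theorem~\ref{theorem7b} applies to it. If an exit $f\colon v_j\to w$ had $w\in C^0$, then the arc of $C$ from $w$ back to $v_j$ followed by $f$ would be a cycle distinct from $C$ (it uses the edge $f\notin C^1$) sharing a vertex with $C$, contradicting that a maximal cycle is disjoint from every other cycle; hence $w\notin C^0$. If instead $w$ lay on a maximal cycle $C'\neq C$, the edge $f$ would provide a path from $C^0$ to $C'^0$, so $C$ connects to $C'$, contradicting maximality of $C'$. Thus $w\notin C(E)$, and by Theorem~\ref{theorem7b} the function $d_w$ is eventually trivial. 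Since $E$ is finite there are only finitely many exit edges, so their targets form a finite set and I may choose $t\in\mathbb Z$ with $d(w_{s+1})=0$ for every exit target $w$ and every $s\ge t$.

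For $s\ge t$ the recurrence then collapses to the exit-free relation $d((v_j)_s)=d((v_{j+1})_{s+1})$. Setting $k_i:=d\big((v_i)_t\big)$ for $0\le i<n$ and iterating this relation downward in $s$ (each intermediate level remaining $\ge t$) yields $d\big((v_j)_s\big)=d\big((v_{j-(s-t)})_t\big)=k_i$, where $i\equiv j-(s-t)\equiv j+t-s\pmod n$, which is precisely the asserted formula. The only genuinely substantive step is the geometric one: pinning down that exits of a maximal cycle can neither return to $C^0$ nor reach another maximal cycle, so that the eventual-triviality conclusion of Theorem~\ref{theorem7b} can be applied uniformly over the finitely many exits; once the exit terms vanish, the rest is the bookkeeping of the period-$n$ shift recorded by the congruence.
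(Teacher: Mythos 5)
Your proof is correct and follows essentially the same route as the paper's: use Theorem~\ref{theorem7b} to make all contributions from vertices outside $C(E)$ vanish beyond some level $t$, then collapse the dimension recurrence of Corollary~\ref{corollarydimension} to the period-$n$ shift relation $d((v_j)_s)=d((v_{j+1})_{s+1})$ around the cycle. Your explicit verification that an exit edge of a maximal cycle can neither return to $C^0$ nor reach another maximal cycle is a point the paper uses only implicitly (when it collapses the sum to the single cycle-edge term), so this is a sharpening of the same argument rather than a different one.
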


\begin{proof}
    Let $C=e_0e_1\cdots e_{n-1}$ be a maximal cycle of length $n$ in the finite graph $E$,  where $C^0=\{v_0, v_1, \dots, v_{n-1}\}$, with $s(e_{i})=v_{i}=r(e_{i-1})$, where $0< i < n-1$ and $s(e_{n-1})=v_{n-1}$ and $r(e_{n-1})=s(e_0)=v_0$.


Let $w\in E^0\setminus C^0$. If $w\in C(E)$, then there is no path from $C$ to $w$ since $w$ is in a maximal cycle and $C$ is also a maximal cycle. Thus, $d(w_i)$ does not contribute to $d((v_m)_j)$, for all $i,j\in \mathbb{Z}$ and $m=0,1,\cdots , n-1$. If $w\not \in C(E)$, then by Theorem \ref{theorem7b}, $d_w $ is eventually trivial, that is, there exists $j_w$ such that for all $i\geq j_w$, $d(w_i)=0$. Let $t=\max\{ j_w \mid w\in E^0\setminus C(E)\}$. Then for all $w\in E^0\setminus C(E)$ and $i\geq t$, $d(w_i)=0$. 



Let $v_m\in C^0$, $m=0,1,\cdots, n-1$ and $j \geq t$. Let $w\in r(s^{-1}(v_m))$. If $w\not \in C^0$, then $w\not \in C(E)$ and thus $d(w_j)=0$. Thus by Corollary~\ref{corollarydimension}, 
$$d((v_m)_j)= \displaystyle\sum_{e\in s^{-1}(v_m)} d(r(e)_{j+1})=  d((v_{m'})_{j+1}) $$
where $m'=m+1\pmod{n}$. 

Hence, for each $i=0,1,\cdots, n-1$, every vertex in 
$$p_i=(e_i)_t(e_{i+1})_{t+1}(e_{i+2})_{t+2}\cdots (e_{n-1})_{t+n-1-i}(e_0)_{t+n-i}(e_1)_{t+n-i+1}\cdots $$ has equal dimension, say $k_i$. 
Let $s\geq t $ and $(v_j)_s \in p_i^0$, that is, $d((v_j)_s)=k_i=d((v_i)_t)$.  
Notice that $s-j=t-i+nq$, for some $q\in \mathbb{N}$. 
Hence, $d((v_j)_s)=k_i$, whenever    $i \equiv j+t-s \pmod{n}$.
\end{proof}

\begin{example}
    Consider the following graph $E$ and let $\rho\in \ffRep(\overline{\hat{E}}, \overline{r_{ck}})$ with the dimension function $d^\rho=d$ as illustrated below. Here, we replace the vertices with the dimension in the representation and
    for simplicity, we omit the ghost edges in the graph $\overline{\hat{E}}$. The dimension $d(\alpha_i)$ of a vertex $\alpha_i$,  is in row $\alpha$ and column $i$. For instance, $d((v_3)_2)=2$.

 By Corollary \ref{corollarydimension}, it could be verified that indeed, we obtain a vector space distribution for a finite representation. 
    \begin{center}

\tikzset{every picture/.style={line width=0.75pt}} 

\begin{tikzpicture}[x=0.75pt,y=0.75pt,yscale=-1,xscale=1]

\draw (114,119.4) node [anchor=north west][inner sep=0.75pt]    {$v_{1}$};
\draw (62,69.4) node [anchor=north west][inner sep=0.75pt]    {$v_{2}$};
\draw (113,22.4) node [anchor=north west][inner sep=0.75pt]    {$v_{3}$};
\draw (162,70.4) node [anchor=north west][inner sep=0.75pt]    {$v_{4}$};
\draw (245,72.4) node [anchor=north west][inner sep=0.75pt]    {$a$};
\draw (327,72.4) node [anchor=north west][inner sep=0.75pt]    {$b$};
\draw (394,72.4) node [anchor=north west][inner sep=0.75pt]    {$c$};
\draw (10,72.4) node [anchor=north west][inner sep=0.75pt]    {$E:$};
\draw    (77.12,65) .. controls (82.71,50) and (93.03,40.28) .. (108.09,35.85) ;
\draw [shift={(110,35.32)}, rotate = 165.52] [color={rgb, 255:red, 0; green, 0; blue, 0 }  ][line width=0.75]    (10.93,-3.29) .. controls (6.95,-1.4) and (3.31,-0.3) .. (0,0) .. controls (3.31,0.3) and (6.95,1.4) .. (10.93,3.29)   ;
\draw    (133,34.75) .. controls (149.23,37.99) and (159.84,47.81) .. (164.83,64.19) ;
\draw [shift={(165.35,66)}, rotate = 254.78] [color={rgb, 255:red, 0; green, 0; blue, 0 }  ][line width=0.75]    (10.93,-3.29) .. controls (6.95,-1.4) and (3.31,-0.3) .. (0,0) .. controls (3.31,0.3) and (6.95,1.4) .. (10.93,3.29)   ;
\draw    (164.65,92) .. controls (159.46,107.71) and (149.83,117.88) .. (135.78,122.53) ;
\draw [shift={(134,123.08)}, rotate = 343.8] [color={rgb, 255:red, 0; green, 0; blue, 0 }  ][line width=0.75]    (10.93,-3.29) .. controls (6.95,-1.4) and (3.31,-0.3) .. (0,0) .. controls (3.31,0.3) and (6.95,1.4) .. (10.93,3.29)   ;
\draw    (111,124.4) .. controls (94.41,121.33) and (82.96,110.75) .. (76.66,92.7) ;
\draw [shift={(76.09,91)}, rotate = 72.09] [color={rgb, 255:red, 0; green, 0; blue, 0 }  ][line width=0.75]    (10.93,-3.29) .. controls (6.95,-1.4) and (3.31,-0.3) .. (0,0) .. controls (3.31,0.3) and (6.95,1.4) .. (10.93,3.29)   ;
\draw    (182,79.14) -- (240,79.86) ;
\draw [shift={(242,79.89)}, rotate = 180.71] [color={rgb, 255:red, 0; green, 0; blue, 0 }  ][line width=0.75]    (10.93,-3.29) .. controls (6.95,-1.4) and (3.31,-0.3) .. (0,0) .. controls (3.31,0.3) and (6.95,1.4) .. (10.93,3.29)   ;
\draw    (256.25,68) .. controls (278.96,35.17) and (302.34,34.68) .. (326.37,66.52) ;
\draw [shift={(327.47,68)}, rotate = 233.77] [color={rgb, 255:red, 0; green, 0; blue, 0 }  ][line width=0.75]    (10.93,-3.29) .. controls (6.95,-1.4) and (3.31,-0.3) .. (0,0) .. controls (3.31,0.3) and (6.95,1.4) .. (10.93,3.29)   ;
\draw    (328.05,92) .. controls (306.23,125.49) and (282.81,125.99) .. (257.78,93.51) ;
\draw [shift={(256.63,92)}, rotate = 53.16] [color={rgb, 255:red, 0; green, 0; blue, 0 }  ][line width=0.75]    (10.93,-3.29) .. controls (6.95,-1.4) and (3.31,-0.3) .. (0,0) .. controls (3.31,0.3) and (6.95,1.4) .. (10.93,3.29)   ;
\draw    (342,80) -- (389,80) ;
\draw [shift={(391,80)}, rotate = 180] [color={rgb, 255:red, 0; green, 0; blue, 0 }  ][line width=0.75]    (10.93,-3.29) .. controls (6.95,-1.4) and (3.31,-0.3) .. (0,0) .. controls (3.31,0.3) and (6.95,1.4) .. (10.93,3.29)   ;

\end{tikzpicture}
    
    \end{center}
    ~\\
    \begin{center}

\tikzset{every picture/.style={line width=0.75pt}} 

\tikzset{every picture/.style={line width=0.75pt}} 

\begin{tikzpicture}[x=0.75pt,y=0.75pt,yscale=-1,xscale=1]

\draw (129,56.4) node [anchor=north west][inner sep=0.75pt]  [font=\Large]  {$.\ .\ .$};
\draw (127,103.4) node [anchor=north west][inner sep=0.75pt]  [font=\Large]  {$.\ .\ .$};
\draw (127,153.4) node [anchor=north west][inner sep=0.75pt]  [font=\Large]  {$.\ .\ .$};
\draw (132,202.4) node [anchor=north west][inner sep=0.75pt]  [font=\Large]  {$.\ .\ .$};
\draw (615,55.47) node [anchor=north west][inner sep=0.75pt]  [font=\Large]  {$.\ .\ .$};
\draw (612,103.47) node [anchor=north west][inner sep=0.75pt]  [font=\Large]  {$.\ .\ .$};
\draw (613,153.47) node [anchor=north west][inner sep=0.75pt]  [font=\Large]  {$.\ .\ .$};
\draw (611,202.47) node [anchor=north west][inner sep=0.75pt]  [font=\Large]  {$.\ .\ .$};
\draw (439,64.4) node [anchor=north west][inner sep=0.75pt]    {$1$};
\draw (389,64.4) node [anchor=north west][inner sep=0.75pt]    {$2$};
\draw (490,64.4) node [anchor=north west][inner sep=0.75pt]    {$4$};
\draw (538,65.4) node [anchor=north west][inner sep=0.75pt]    {$3$};
\draw (588,65.4) node [anchor=north west][inner sep=0.75pt]    {$2$};
\draw (439,114.4) node [anchor=north west][inner sep=0.75pt]    {$2$};
\draw (390,114.4) node [anchor=north west][inner sep=0.75pt]    {$3$};
\draw (490,114.4) node [anchor=north west][inner sep=0.75pt]    {$1$};
\draw (538,115.4) node [anchor=north west][inner sep=0.75pt]    {$4$};
\draw (588,114.4) node [anchor=north west][inner sep=0.75pt]    {$3$};
\draw (438,163.4) node [anchor=north west][inner sep=0.75pt]    {$3$};
\draw (388,164.4) node [anchor=north west][inner sep=0.75pt]    {$4$};
\draw (489,163.4) node [anchor=north west][inner sep=0.75pt]    {$2$};
\draw (537,164.4) node [anchor=north west][inner sep=0.75pt]    {$1$};
\draw (587,165.4) node [anchor=north west][inner sep=0.75pt]    {$4$};
\draw (440,213.4) node [anchor=north west][inner sep=0.75pt]    {$4$};
\draw (390,213.4) node [anchor=north west][inner sep=0.75pt]    {$1$};
\draw (487,213.4) node [anchor=north west][inner sep=0.75pt]    {$3$};
\draw (539,214.4) node [anchor=north west][inner sep=0.75pt]    {$2$};
\draw (588,213.4) node [anchor=north west][inner sep=0.75pt]    {$1$};
\draw (339,65.4) node [anchor=north west][inner sep=0.75pt]    {$3$};
\draw (289,64.4) node [anchor=north west][inner sep=0.75pt]    {$4$};
\draw (339,115.4) node [anchor=north west][inner sep=0.75pt]    {$4$};
\draw (290,114.4) node [anchor=north west][inner sep=0.75pt]    {$1$};
\draw (338,164.4) node [anchor=north west][inner sep=0.75pt]    {$1$};
\draw (288,164.4) node [anchor=north west][inner sep=0.75pt]    {$2$};
\draw (340,214.4) node [anchor=north west][inner sep=0.75pt]    {$2$};
\draw (290,213.4) node [anchor=north west][inner sep=0.75pt]    {$3$};
\draw (127,253.4) node [anchor=north west][inner sep=0.75pt]  [font=\Large]  {$.\ .\ .$};
\draw (126,304.4) node [anchor=north west][inner sep=0.75pt]  [font=\Large]  {$.\ .\ .$};
\draw (132,355.4) node [anchor=north west][inner sep=0.75pt]  [font=\Large]  {$.\ .\ .$};
\draw (612,253.47) node [anchor=north west][inner sep=0.75pt]  [font=\Large]  {$.\ .\ .$};
\draw (612,304.47) node [anchor=north west][inner sep=0.75pt]  [font=\Large]  {$.\ .\ .$};
\draw (611,355.47) node [anchor=north west][inner sep=0.75pt]  [font=\Large]  {$.\ .\ .$};
\draw (439,264.4) node [anchor=north west][inner sep=0.75pt]    {$0$};
\draw (390,264.4) node [anchor=north west][inner sep=0.75pt]    {$0$};
\draw (490,264.4) node [anchor=north west][inner sep=0.75pt]    {$0$};
\draw (538,265.4) node [anchor=north west][inner sep=0.75pt]    {$0$};
\draw (588,264.4) node [anchor=north west][inner sep=0.75pt]    {$0$};
\draw (437,314.4) node [anchor=north west][inner sep=0.75pt]    {$0$};
\draw (387,315.4) node [anchor=north west][inner sep=0.75pt]    {$0$};
\draw (488,314.4) node [anchor=north west][inner sep=0.75pt]    {$0$};
\draw (536,315.4) node [anchor=north west][inner sep=0.75pt]    {$0$};
\draw (586,316.4) node [anchor=north west][inner sep=0.75pt]    {$0$};
\draw (440,366.4) node [anchor=north west][inner sep=0.75pt]    {$0$};
\draw (390,366.4) node [anchor=north west][inner sep=0.75pt]    {$1$};
\draw (490,366.4) node [anchor=north west][inner sep=0.75pt]    {$0$};
\draw (539,367.4) node [anchor=north west][inner sep=0.75pt]    {$0$};
\draw (588,366.4) node [anchor=north west][inner sep=0.75pt]    {$0$};
\draw (339,265.4) node [anchor=north west][inner sep=0.75pt]    {$0$};
\draw (290,264.4) node [anchor=north west][inner sep=0.75pt]    {$1$};
\draw (337,315.4) node [anchor=north west][inner sep=0.75pt]    {$1$};
\draw (287,315.4) node [anchor=north west][inner sep=0.75pt]    {$2$};
\draw (340,367.4) node [anchor=north west][inner sep=0.75pt]    {$2$};
\draw (290,366.4) node [anchor=north west][inner sep=0.75pt]    {$3$};
\draw (100,63.4) node [anchor=north west][inner sep=0.75pt]  [color={rgb, 255:red, 237; green, 7; blue, 34 }  ,opacity=1 ]  {$v_{1}$};
\draw (101,113.4) node [anchor=north west][inner sep=0.75pt]  [color={rgb, 255:red, 237; green, 7; blue, 34 }  ,opacity=1 ]  {$v_{2}$};
\draw (99,163.4) node [anchor=north west][inner sep=0.75pt]  [color={rgb, 255:red, 237; green, 7; blue, 34 }  ,opacity=1 ]  {$v_{3}$};
\draw (101,212.4) node [anchor=north west][inner sep=0.75pt]  [color={rgb, 255:red, 237; green, 7; blue, 34 }  ,opacity=1 ]  {$v_{4}$};
\draw (101,263.4) node [anchor=north west][inner sep=0.75pt]  [color={rgb, 255:red, 237; green, 7; blue, 34 }  ,opacity=1 ]  {$a$};
\draw (98,314.4) node [anchor=north west][inner sep=0.75pt]  [color={rgb, 255:red, 237; green, 7; blue, 34 }  ,opacity=1 ]  {$b$};
\draw (101,365.4) node [anchor=north west][inner sep=0.75pt]  [color={rgb, 255:red, 237; green, 7; blue, 34 }  ,opacity=1 ]  {$c$};
\draw (129,29.4) node [anchor=north west][inner sep=0.75pt]  [font=\Large,color={rgb, 255:red, 237; green, 7; blue, 34 }  ,opacity=1 ]  {$.\ .\ .$};
\draw (615,28.47) node [anchor=north west][inner sep=0.75pt]  [font=\Large,color={rgb, 255:red, 237; green, 7; blue, 34 }  ,opacity=1 ]  {$.\ .\ .$};
\draw (439,37.4) node [anchor=north west][inner sep=0.75pt]  [color={rgb, 255:red, 237; green, 7; blue, 34 }  ,opacity=1 ]  {$1$};
\draw (389,37.4) node [anchor=north west][inner sep=0.75pt]  [color={rgb, 255:red, 237; green, 7; blue, 34 }  ,opacity=1 ]  {$0$};
\draw (490,37.4) node [anchor=north west][inner sep=0.75pt]  [color={rgb, 255:red, 237; green, 7; blue, 34 }  ,opacity=1 ]  {$2$};
\draw (538,38.4) node [anchor=north west][inner sep=0.75pt]  [color={rgb, 255:red, 237; green, 7; blue, 34 }  ,opacity=1 ]  {$3$};
\draw (588,38.4) node [anchor=north west][inner sep=0.75pt]  [color={rgb, 255:red, 237; green, 7; blue, 34 }  ,opacity=1 ]  {$4$};
\draw (330,38.4) node [anchor=north west][inner sep=0.75pt]  [color={rgb, 255:red, 237; green, 7; blue, 34 }  ,opacity=1 ]  {$-1$};
\draw (280,37.4) node [anchor=north west][inner sep=0.75pt]  [color={rgb, 255:red, 237; green, 7; blue, 34 }  ,opacity=1 ]  {$-2$};
\draw (237,65.4) node [anchor=north west][inner sep=0.75pt]    {$1$};
\draw (187,64.4) node [anchor=north west][inner sep=0.75pt]    {$2$};
\draw (237,115.4) node [anchor=north west][inner sep=0.75pt]    {$2$};
\draw (188,114.4) node [anchor=north west][inner sep=0.75pt]    {$3$};
\draw (236,164.4) node [anchor=north west][inner sep=0.75pt]    {$3$};
\draw (186,164.4) node [anchor=north west][inner sep=0.75pt]    {$5$};
\draw (238,214.4) node [anchor=north west][inner sep=0.75pt]    {$5$};
\draw (188,213.4) node [anchor=north west][inner sep=0.75pt]    {$3$};
\draw (237,265.4) node [anchor=north west][inner sep=0.75pt]    {$2$};
\draw (188,264.4) node [anchor=north west][inner sep=0.75pt]    {$4$};
\draw (235,315.4) node [anchor=north west][inner sep=0.75pt]    {$4$};
\draw (185,315.4) node [anchor=north west][inner sep=0.75pt]    {$6$};
\draw (238,367.4) node [anchor=north west][inner sep=0.75pt]    {$4$};
\draw (188,366.4) node [anchor=north west][inner sep=0.75pt]    {$5$};
\draw (228,38.4) node [anchor=north west][inner sep=0.75pt]  [color={rgb, 255:red, 237; green, 7; blue, 34 }  ,opacity=1 ]  {$-3$};
\draw (178,37.4) node [anchor=north west][inner sep=0.75pt]  [color={rgb, 255:red, 237; green, 7; blue, 34 }  ,opacity=1 ]  {$-4$};
\draw [color={rgb, 255:red, 245; green, 166; blue, 35 }  ,draw opacity=1 ]   (304,80.68) -- (334.6,111.89) ;
\draw [shift={(336,113.32)}, rotate = 225.57] [color={rgb, 255:red, 245; green, 166; blue, 35 }  ,draw opacity=1 ][line width=0.75]    (10.93,-3.29) .. controls (6.95,-1.4) and (3.31,-0.3) .. (0,0) .. controls (3.31,0.3) and (6.95,1.4) .. (10.93,3.29)   ;
\draw [color={rgb, 255:red, 189; green, 16; blue, 224 }  ,draw opacity=1 ]   (305,130.88) -- (333.61,160.68) ;
\draw [shift={(335,162.13)}, rotate = 226.17] [color={rgb, 255:red, 189; green, 16; blue, 224 }  ,draw opacity=1 ][line width=0.75]    (10.93,-3.29) .. controls (6.95,-1.4) and (3.31,-0.3) .. (0,0) .. controls (3.31,0.3) and (6.95,1.4) .. (10.93,3.29)   ;
\draw [color={rgb, 255:red, 74; green, 144; blue, 226 }  ,draw opacity=1 ]   (303,180.15) -- (335.56,211.46) ;
\draw [shift={(337,212.85)}, rotate = 223.88] [color={rgb, 255:red, 74; green, 144; blue, 226 }  ,draw opacity=1 ][line width=0.75]    (10.93,-3.29) .. controls (6.95,-1.4) and (3.31,-0.3) .. (0,0) .. controls (3.31,0.3) and (6.95,1.4) .. (10.93,3.29)   ;
\draw    (305,230.05) -- (334.63,261.49) ;
\draw [shift={(336,262.95)}, rotate = 226.7] [color={rgb, 255:red, 0; green, 0; blue, 0 }  ][line width=0.75]    (10.93,-3.29) .. controls (6.95,-1.4) and (3.31,-0.3) .. (0,0) .. controls (3.31,0.3) and (6.95,1.4) .. (10.93,3.29)   ;
\draw    (305,281.27) -- (332.64,311.26) ;
\draw [shift={(334,312.73)}, rotate = 227.34] [color={rgb, 255:red, 0; green, 0; blue, 0 }  ][line width=0.75]    (10.93,-3.29) .. controls (6.95,-1.4) and (3.31,-0.3) .. (0,0) .. controls (3.31,0.3) and (6.95,1.4) .. (10.93,3.29)   ;
\draw    (302,331.33) -- (335.57,364.27) ;
\draw [shift={(337,365.67)}, rotate = 224.45] [color={rgb, 255:red, 0; green, 0; blue, 0 }  ][line width=0.75]    (10.93,-3.29) .. controls (6.95,-1.4) and (3.31,-0.3) .. (0,0) .. controls (3.31,0.3) and (6.95,1.4) .. (10.93,3.29)   ;
\draw [color={rgb, 255:red, 126; green, 211; blue, 33 }  ,draw opacity=1 ]   (299.81,209) -- (340.56,85.9) ;
\draw [shift={(341.19,84)}, rotate = 108.32] [color={rgb, 255:red, 126; green, 211; blue, 33 }  ,draw opacity=1 ][line width=0.75]    (10.93,-3.29) .. controls (6.95,-1.4) and (3.31,-0.3) .. (0,0) .. controls (3.31,0.3) and (6.95,1.4) .. (10.93,3.29)   ;
\draw [color={rgb, 255:red, 126; green, 211; blue, 33 }  ,draw opacity=1 ]   (354,81.15) -- (385.56,111.47) ;
\draw [shift={(387,112.85)}, rotate = 223.85] [color={rgb, 255:red, 126; green, 211; blue, 33 }  ,draw opacity=1 ][line width=0.75]    (10.93,-3.29) .. controls (6.95,-1.4) and (3.31,-0.3) .. (0,0) .. controls (3.31,0.3) and (6.95,1.4) .. (10.93,3.29)   ;
\draw [color={rgb, 255:red, 245; green, 166; blue, 35 }  ,draw opacity=1 ]   (354,131.5) -- (383.59,161.09) ;
\draw [shift={(385,162.5)}, rotate = 225] [color={rgb, 255:red, 245; green, 166; blue, 35 }  ,draw opacity=1 ][line width=0.75]    (10.93,-3.29) .. controls (6.95,-1.4) and (3.31,-0.3) .. (0,0) .. controls (3.31,0.3) and (6.95,1.4) .. (10.93,3.29)   ;
\draw [color={rgb, 255:red, 189; green, 16; blue, 224 }  ,draw opacity=1 ]   (353,179.98) -- (385.54,210.65) ;
\draw [shift={(387,212.02)}, rotate = 223.3] [color={rgb, 255:red, 189; green, 16; blue, 224 }  ,draw opacity=1 ][line width=0.75]    (10.93,-3.29) .. controls (6.95,-1.4) and (3.31,-0.3) .. (0,0) .. controls (3.31,0.3) and (6.95,1.4) .. (10.93,3.29)   ;
\draw    (354,281.88) -- (382.61,311.68) ;
\draw [shift={(384,313.13)}, rotate = 226.17] [color={rgb, 255:red, 0; green, 0; blue, 0 }  ][line width=0.75]    (10.93,-3.29) .. controls (6.95,-1.4) and (3.31,-0.3) .. (0,0) .. controls (3.31,0.3) and (6.95,1.4) .. (10.93,3.29)   ;
\draw    (352,331.16) -- (385.56,363.45) ;
\draw [shift={(387,364.84)}, rotate = 223.9] [color={rgb, 255:red, 0; green, 0; blue, 0 }  ][line width=0.75]    (10.93,-3.29) .. controls (6.95,-1.4) and (3.31,-0.3) .. (0,0) .. controls (3.31,0.3) and (6.95,1.4) .. (10.93,3.29)   ;
\draw [color={rgb, 255:red, 74; green, 144; blue, 226 }  ,draw opacity=1 ]   (349.76,210) -- (390.62,84.9) ;
\draw [shift={(391.24,83)}, rotate = 108.09] [color={rgb, 255:red, 74; green, 144; blue, 226 }  ,draw opacity=1 ][line width=0.75]    (10.93,-3.29) .. controls (6.95,-1.4) and (3.31,-0.3) .. (0,0) .. controls (3.31,0.3) and (6.95,1.4) .. (10.93,3.29)   ;
\draw [color={rgb, 255:red, 74; green, 144; blue, 226 }  ,draw opacity=1 ]   (404,80.5) -- (434.59,111.09) ;
\draw [shift={(436,112.5)}, rotate = 225] [color={rgb, 255:red, 74; green, 144; blue, 226 }  ,draw opacity=1 ][line width=0.75]    (10.93,-3.29) .. controls (6.95,-1.4) and (3.31,-0.3) .. (0,0) .. controls (3.31,0.3) and (6.95,1.4) .. (10.93,3.29)   ;
\draw [color={rgb, 255:red, 126; green, 211; blue, 33 }  ,draw opacity=1 ]   (405,130.69) -- (433.6,159.88) ;
\draw [shift={(435,161.31)}, rotate = 225.59] [color={rgb, 255:red, 126; green, 211; blue, 33 }  ,draw opacity=1 ][line width=0.75]    (10.93,-3.29) .. controls (6.95,-1.4) and (3.31,-0.3) .. (0,0) .. controls (3.31,0.3) and (6.95,1.4) .. (10.93,3.29)   ;
\draw [color={rgb, 255:red, 245; green, 166; blue, 35 }  ,draw opacity=1 ]   (403,179.98) -- (435.54,210.65) ;
\draw [shift={(437,212.02)}, rotate = 223.3] [color={rgb, 255:red, 245; green, 166; blue, 35 }  ,draw opacity=1 ][line width=0.75]    (10.93,-3.29) .. controls (6.95,-1.4) and (3.31,-0.3) .. (0,0) .. controls (3.31,0.3) and (6.95,1.4) .. (10.93,3.29)   ;
\draw    (405,229.87) -- (434.61,260.69) ;
\draw [shift={(436,262.13)}, rotate = 226.15] [color={rgb, 255:red, 0; green, 0; blue, 0 }  ][line width=0.75]    (10.93,-3.29) .. controls (6.95,-1.4) and (3.31,-0.3) .. (0,0) .. controls (3.31,0.3) and (6.95,1.4) .. (10.93,3.29)   ;
\draw    (355,230.5) -- (385.59,261.09) ;
\draw [shift={(387,262.5)}, rotate = 225] [color={rgb, 255:red, 0; green, 0; blue, 0 }  ][line width=0.75]    (10.93,-3.29) .. controls (6.95,-1.4) and (3.31,-0.3) .. (0,0) .. controls (3.31,0.3) and (6.95,1.4) .. (10.93,3.29)   ;
\draw    (405,281.07) -- (432.63,310.47) ;
\draw [shift={(434,311.93)}, rotate = 226.77] [color={rgb, 255:red, 0; green, 0; blue, 0 }  ][line width=0.75]    (10.93,-3.29) .. controls (6.95,-1.4) and (3.31,-0.3) .. (0,0) .. controls (3.31,0.3) and (6.95,1.4) .. (10.93,3.29)   ;
\draw    (402,331.16) -- (435.56,363.45) ;
\draw [shift={(437,364.84)}, rotate = 223.9] [color={rgb, 255:red, 0; green, 0; blue, 0 }  ][line width=0.75]    (10.93,-3.29) .. controls (6.95,-1.4) and (3.31,-0.3) .. (0,0) .. controls (3.31,0.3) and (6.95,1.4) .. (10.93,3.29)   ;
\draw [color={rgb, 255:red, 189; green, 16; blue, 224 }  ,draw opacity=1 ]   (399.78,209) -- (440.59,84.9) ;
\draw [shift={(441.22,83)}, rotate = 108.2] [color={rgb, 255:red, 189; green, 16; blue, 224 }  ,draw opacity=1 ][line width=0.75]    (10.93,-3.29) .. controls (6.95,-1.4) and (3.31,-0.3) .. (0,0) .. controls (3.31,0.3) and (6.95,1.4) .. (10.93,3.29)   ;
\draw [color={rgb, 255:red, 189; green, 16; blue, 224 }  ,draw opacity=1 ]   (454,80.32) -- (485.57,111.28) ;
\draw [shift={(487,112.68)}, rotate = 224.43] [color={rgb, 255:red, 189; green, 16; blue, 224 }  ,draw opacity=1 ][line width=0.75]    (10.93,-3.29) .. controls (6.95,-1.4) and (3.31,-0.3) .. (0,0) .. controls (3.31,0.3) and (6.95,1.4) .. (10.93,3.29)   ;
\draw [color={rgb, 255:red, 74; green, 144; blue, 226 }  ,draw opacity=1 ]   (454,130.32) -- (484.57,160.28) ;
\draw [shift={(486,161.68)}, rotate = 224.42] [color={rgb, 255:red, 74; green, 144; blue, 226 }  ,draw opacity=1 ][line width=0.75]    (10.93,-3.29) .. controls (6.95,-1.4) and (3.31,-0.3) .. (0,0) .. controls (3.31,0.3) and (6.95,1.4) .. (10.93,3.29)   ;
\draw [color={rgb, 255:red, 126; green, 211; blue, 33 }  ,draw opacity=1 ]   (453,179.68) -- (482.6,209.89) ;
\draw [shift={(484,211.32)}, rotate = 225.58] [color={rgb, 255:red, 126; green, 211; blue, 33 }  ,draw opacity=1 ][line width=0.75]    (10.93,-3.29) .. controls (6.95,-1.4) and (3.31,-0.3) .. (0,0) .. controls (3.31,0.3) and (6.95,1.4) .. (10.93,3.29)   ;
\draw    (455,229.68) -- (485.6,260.89) ;
\draw [shift={(487,262.32)}, rotate = 225.57] [color={rgb, 255:red, 0; green, 0; blue, 0 }  ][line width=0.75]    (10.93,-3.29) .. controls (6.95,-1.4) and (3.31,-0.3) .. (0,0) .. controls (3.31,0.3) and (6.95,1.4) .. (10.93,3.29)   ;
\draw    (454,280.68) -- (483.6,310.89) ;
\draw [shift={(485,312.32)}, rotate = 225.58] [color={rgb, 255:red, 0; green, 0; blue, 0 }  ][line width=0.75]    (10.93,-3.29) .. controls (6.95,-1.4) and (3.31,-0.3) .. (0,0) .. controls (3.31,0.3) and (6.95,1.4) .. (10.93,3.29)   ;
\draw    (452,330.33) -- (485.57,363.27) ;
\draw [shift={(487,364.67)}, rotate = 224.45] [color={rgb, 255:red, 0; green, 0; blue, 0 }  ][line width=0.75]    (10.93,-3.29) .. controls (6.95,-1.4) and (3.31,-0.3) .. (0,0) .. controls (3.31,0.3) and (6.95,1.4) .. (10.93,3.29)   ;
\draw [color={rgb, 255:red, 245; green, 166; blue, 35 }  ,draw opacity=1 ]   (449.86,209) -- (491.5,84.9) ;
\draw [shift={(492.14,83)}, rotate = 108.55] [color={rgb, 255:red, 245; green, 166; blue, 35 }  ,draw opacity=1 ][line width=0.75]    (10.93,-3.29) .. controls (6.95,-1.4) and (3.31,-0.3) .. (0,0) .. controls (3.31,0.3) and (6.95,1.4) .. (10.93,3.29)   ;
\draw [color={rgb, 255:red, 245; green, 166; blue, 35 }  ,draw opacity=1 ]   (505,81.06) -- (533.63,111.48) ;
\draw [shift={(535,112.94)}, rotate = 226.74] [color={rgb, 255:red, 245; green, 166; blue, 35 }  ,draw opacity=1 ][line width=0.75]    (10.93,-3.29) .. controls (6.95,-1.4) and (3.31,-0.3) .. (0,0) .. controls (3.31,0.3) and (6.95,1.4) .. (10.93,3.29)   ;
\draw [color={rgb, 255:red, 189; green, 16; blue, 224 }  ,draw opacity=1 ]   (505,131.07) -- (532.63,160.47) ;
\draw [shift={(534,161.93)}, rotate = 226.77] [color={rgb, 255:red, 189; green, 16; blue, 224 }  ,draw opacity=1 ][line width=0.75]    (10.93,-3.29) .. controls (6.95,-1.4) and (3.31,-0.3) .. (0,0) .. controls (3.31,0.3) and (6.95,1.4) .. (10.93,3.29)   ;
\draw [color={rgb, 255:red, 74; green, 144; blue, 226 }  ,draw opacity=1 ]   (504,179.68) -- (534.6,210.89) ;
\draw [shift={(536,212.32)}, rotate = 225.57] [color={rgb, 255:red, 74; green, 144; blue, 226 }  ,draw opacity=1 ][line width=0.75]    (10.93,-3.29) .. controls (6.95,-1.4) and (3.31,-0.3) .. (0,0) .. controls (3.31,0.3) and (6.95,1.4) .. (10.93,3.29)   ;
\draw    (502,229.68) -- (533.6,261.9) ;
\draw [shift={(535,263.32)}, rotate = 225.56] [color={rgb, 255:red, 0; green, 0; blue, 0 }  ][line width=0.75]    (10.93,-3.29) .. controls (6.95,-1.4) and (3.31,-0.3) .. (0,0) .. controls (3.31,0.3) and (6.95,1.4) .. (10.93,3.29)   ;
\draw    (505,281.48) -- (531.66,311.04) ;
\draw [shift={(533,312.52)}, rotate = 227.95] [color={rgb, 255:red, 0; green, 0; blue, 0 }  ][line width=0.75]    (10.93,-3.29) .. controls (6.95,-1.4) and (3.31,-0.3) .. (0,0) .. controls (3.31,0.3) and (6.95,1.4) .. (10.93,3.29)   ;
\draw    (503,330.85) -- (534.61,363.71) ;
\draw [shift={(536,365.15)}, rotate = 226.1] [color={rgb, 255:red, 0; green, 0; blue, 0 }  ][line width=0.75]    (10.93,-3.29) .. controls (6.95,-1.4) and (3.31,-0.3) .. (0,0) .. controls (3.31,0.3) and (6.95,1.4) .. (10.93,3.29)   ;
\draw [color={rgb, 255:red, 126; green, 211; blue, 33 }  ,draw opacity=1 ]   (496.96,209) -- (539.39,85.89) ;
\draw [shift={(540.04,84)}, rotate = 109.01] [color={rgb, 255:red, 126; green, 211; blue, 33 }  ,draw opacity=1 ][line width=0.75]    (10.93,-3.29) .. controls (6.95,-1.4) and (3.31,-0.3) .. (0,0) .. controls (3.31,0.3) and (6.95,1.4) .. (10.93,3.29)   ;
\draw [color={rgb, 255:red, 126; green, 211; blue, 33 }  ,draw opacity=1 ]   (553,81.32) -- (583.57,111.28) ;
\draw [shift={(585,112.68)}, rotate = 224.42] [color={rgb, 255:red, 126; green, 211; blue, 33 }  ,draw opacity=1 ][line width=0.75]    (10.93,-3.29) .. controls (6.95,-1.4) and (3.31,-0.3) .. (0,0) .. controls (3.31,0.3) and (6.95,1.4) .. (10.93,3.29)   ;
\draw [color={rgb, 255:red, 245; green, 166; blue, 35 }  ,draw opacity=1 ]   (553,131.68) -- (582.6,161.89) ;
\draw [shift={(584,163.32)}, rotate = 225.58] [color={rgb, 255:red, 245; green, 166; blue, 35 }  ,draw opacity=1 ][line width=0.75]    (10.93,-3.29) .. controls (6.95,-1.4) and (3.31,-0.3) .. (0,0) .. controls (3.31,0.3) and (6.95,1.4) .. (10.93,3.29)   ;
\draw [color={rgb, 255:red, 189; green, 16; blue, 224 }  ,draw opacity=1 ]   (552,180.15) -- (583.56,210.47) ;
\draw [shift={(585,211.85)}, rotate = 223.85] [color={rgb, 255:red, 189; green, 16; blue, 224 }  ,draw opacity=1 ][line width=0.75]    (10.93,-3.29) .. controls (6.95,-1.4) and (3.31,-0.3) .. (0,0) .. controls (3.31,0.3) and (6.95,1.4) .. (10.93,3.29)   ;
\draw    (554,230.68) -- (583.6,260.89) ;
\draw [shift={(585,262.32)}, rotate = 225.58] [color={rgb, 255:red, 0; green, 0; blue, 0 }  ][line width=0.75]    (10.93,-3.29) .. controls (6.95,-1.4) and (3.31,-0.3) .. (0,0) .. controls (3.31,0.3) and (6.95,1.4) .. (10.93,3.29)   ;
\draw    (553,282.06) -- (581.63,312.48) ;
\draw [shift={(583,313.94)}, rotate = 226.74] [color={rgb, 255:red, 0; green, 0; blue, 0 }  ][line width=0.75]    (10.93,-3.29) .. controls (6.95,-1.4) and (3.31,-0.3) .. (0,0) .. controls (3.31,0.3) and (6.95,1.4) .. (10.93,3.29)   ;
\draw    (551,331.33) -- (583.57,363.27) ;
\draw [shift={(585,364.67)}, rotate = 224.44] [color={rgb, 255:red, 0; green, 0; blue, 0 }  ][line width=0.75]    (10.93,-3.29) .. controls (6.95,-1.4) and (3.31,-0.3) .. (0,0) .. controls (3.31,0.3) and (6.95,1.4) .. (10.93,3.29)   ;
\draw [color={rgb, 255:red, 74; green, 144; blue, 226 }  ,draw opacity=1 ]   (548.78,210) -- (589.59,85.9) ;
\draw [shift={(590.22,84)}, rotate = 108.2] [color={rgb, 255:red, 74; green, 144; blue, 226 }  ,draw opacity=1 ][line width=0.75]    (10.93,-3.29) .. controls (6.95,-1.4) and (3.31,-0.3) .. (0,0) .. controls (3.31,0.3) and (6.95,1.4) .. (10.93,3.29)   ;
\draw    (302,313.85) -- (334.56,282.54) ;
\draw [shift={(336,281.15)}, rotate = 136.12] [color={rgb, 255:red, 0; green, 0; blue, 0 }  ][line width=0.75]    (10.93,-3.29) .. controls (6.95,-1.4) and (3.31,-0.3) .. (0,0) .. controls (3.31,0.3) and (6.95,1.4) .. (10.93,3.29)   ;
\draw    (352,313.84) -- (385.56,281.55) ;
\draw [shift={(387,280.16)}, rotate = 136.1] [color={rgb, 255:red, 0; green, 0; blue, 0 }  ][line width=0.75]    (10.93,-3.29) .. controls (6.95,-1.4) and (3.31,-0.3) .. (0,0) .. controls (3.31,0.3) and (6.95,1.4) .. (10.93,3.29)   ;
\draw    (402,313.67) -- (434.57,281.73) ;
\draw [shift={(436,280.33)}, rotate = 135.56] [color={rgb, 255:red, 0; green, 0; blue, 0 }  ][line width=0.75]    (10.93,-3.29) .. controls (6.95,-1.4) and (3.31,-0.3) .. (0,0) .. controls (3.31,0.3) and (6.95,1.4) .. (10.93,3.29)   ;
\draw    (452,313.01) -- (485.55,281.36) ;
\draw [shift={(487,279.99)}, rotate = 136.67] [color={rgb, 255:red, 0; green, 0; blue, 0 }  ][line width=0.75]    (10.93,-3.29) .. controls (6.95,-1.4) and (3.31,-0.3) .. (0,0) .. controls (3.31,0.3) and (6.95,1.4) .. (10.93,3.29)   ;
\draw    (503,312.68) -- (533.57,282.72) ;
\draw [shift={(535,281.32)}, rotate = 135.58] [color={rgb, 255:red, 0; green, 0; blue, 0 }  ][line width=0.75]    (10.93,-3.29) .. controls (6.95,-1.4) and (3.31,-0.3) .. (0,0) .. controls (3.31,0.3) and (6.95,1.4) .. (10.93,3.29)   ;
\draw    (551,313.67) -- (583.57,281.73) ;
\draw [shift={(585,280.33)}, rotate = 135.56] [color={rgb, 255:red, 0; green, 0; blue, 0 }  ][line width=0.75]    (10.93,-3.29) .. controls (6.95,-1.4) and (3.31,-0.3) .. (0,0) .. controls (3.31,0.3) and (6.95,1.4) .. (10.93,3.29)   ;
\draw [color={rgb, 255:red, 74; green, 144; blue, 226 }  ,draw opacity=1 ]   (202,80.68) -- (232.6,111.89) ;
\draw [shift={(234,113.32)}, rotate = 225.57] [color={rgb, 255:red, 74; green, 144; blue, 226 }  ,draw opacity=1 ][line width=0.75]    (10.93,-3.29) .. controls (6.95,-1.4) and (3.31,-0.3) .. (0,0) .. controls (3.31,0.3) and (6.95,1.4) .. (10.93,3.29)   ;
\draw [color={rgb, 255:red, 126; green, 211; blue, 33 }  ,draw opacity=1 ]   (203,130.88) -- (231.61,160.68) ;
\draw [shift={(233,162.13)}, rotate = 226.17] [color={rgb, 255:red, 126; green, 211; blue, 33 }  ,draw opacity=1 ][line width=0.75]    (10.93,-3.29) .. controls (6.95,-1.4) and (3.31,-0.3) .. (0,0) .. controls (3.31,0.3) and (6.95,1.4) .. (10.93,3.29)   ;
\draw [color={rgb, 255:red, 245; green, 166; blue, 35 }  ,draw opacity=1 ]   (201,180.15) -- (233.56,211.46) ;
\draw [shift={(235,212.85)}, rotate = 223.88] [color={rgb, 255:red, 245; green, 166; blue, 35 }  ,draw opacity=1 ][line width=0.75]    (10.93,-3.29) .. controls (6.95,-1.4) and (3.31,-0.3) .. (0,0) .. controls (3.31,0.3) and (6.95,1.4) .. (10.93,3.29)   ;
\draw    (203,230.05) -- (232.63,261.49) ;
\draw [shift={(234,262.95)}, rotate = 226.7] [color={rgb, 255:red, 0; green, 0; blue, 0 }  ][line width=0.75]    (10.93,-3.29) .. controls (6.95,-1.4) and (3.31,-0.3) .. (0,0) .. controls (3.31,0.3) and (6.95,1.4) .. (10.93,3.29)   ;
\draw    (203,281.27) -- (230.64,311.26) ;
\draw [shift={(232,312.73)}, rotate = 227.34] [color={rgb, 255:red, 0; green, 0; blue, 0 }  ][line width=0.75]    (10.93,-3.29) .. controls (6.95,-1.4) and (3.31,-0.3) .. (0,0) .. controls (3.31,0.3) and (6.95,1.4) .. (10.93,3.29)   ;
\draw    (200,331.33) -- (233.57,364.27) ;
\draw [shift={(235,365.67)}, rotate = 224.45] [color={rgb, 255:red, 0; green, 0; blue, 0 }  ][line width=0.75]    (10.93,-3.29) .. controls (6.95,-1.4) and (3.31,-0.3) .. (0,0) .. controls (3.31,0.3) and (6.95,1.4) .. (10.93,3.29)   ;
\draw [color={rgb, 255:red, 189; green, 16; blue, 224 }  ,draw opacity=1 ]   (197.81,209) -- (238.56,85.9) ;
\draw [shift={(239.19,84)}, rotate = 108.32] [color={rgb, 255:red, 189; green, 16; blue, 224 }  ,draw opacity=1 ][line width=0.75]    (10.93,-3.29) .. controls (6.95,-1.4) and (3.31,-0.3) .. (0,0) .. controls (3.31,0.3) and (6.95,1.4) .. (10.93,3.29)   ;
\draw    (200,313.85) -- (232.56,282.54) ;
\draw [shift={(234,281.15)}, rotate = 136.12] [color={rgb, 255:red, 0; green, 0; blue, 0 }  ][line width=0.75]    (10.93,-3.29) .. controls (6.95,-1.4) and (3.31,-0.3) .. (0,0) .. controls (3.31,0.3) and (6.95,1.4) .. (10.93,3.29)   ;
\draw [color={rgb, 255:red, 189; green, 16; blue, 224 }  ,draw opacity=1 ]   (252,80.82) -- (285.53,111.82) ;
\draw [shift={(287,113.18)}, rotate = 222.75] [color={rgb, 255:red, 189; green, 16; blue, 224 }  ,draw opacity=1 ][line width=0.75]    (10.93,-3.29) .. controls (6.95,-1.4) and (3.31,-0.3) .. (0,0) .. controls (3.31,0.3) and (6.95,1.4) .. (10.93,3.29)   ;
\draw [color={rgb, 255:red, 74; green, 144; blue, 226 }  ,draw opacity=1 ]   (252,131.15) -- (283.56,161.47) ;
\draw [shift={(285,162.85)}, rotate = 223.85] [color={rgb, 255:red, 74; green, 144; blue, 226 }  ,draw opacity=1 ][line width=0.75]    (10.93,-3.29) .. controls (6.95,-1.4) and (3.31,-0.3) .. (0,0) .. controls (3.31,0.3) and (6.95,1.4) .. (10.93,3.29)   ;
\draw [color={rgb, 255:red, 126; green, 211; blue, 33 }  ,draw opacity=1 ]   (251,179.67) -- (285.52,210.99) ;
\draw [shift={(287,212.33)}, rotate = 222.22] [color={rgb, 255:red, 126; green, 211; blue, 33 }  ,draw opacity=1 ][line width=0.75]    (10.93,-3.29) .. controls (6.95,-1.4) and (3.31,-0.3) .. (0,0) .. controls (3.31,0.3) and (6.95,1.4) .. (10.93,3.29)   ;
\draw    (253,230.15) -- (285.56,261.46) ;
\draw [shift={(287,262.85)}, rotate = 223.88] [color={rgb, 255:red, 0; green, 0; blue, 0 }  ][line width=0.75]    (10.93,-3.29) .. controls (6.95,-1.4) and (3.31,-0.3) .. (0,0) .. controls (3.31,0.3) and (6.95,1.4) .. (10.93,3.29)   ;
\draw    (252,281.5) -- (282.59,312.09) ;
\draw [shift={(284,313.5)}, rotate = 225] [color={rgb, 255:red, 0; green, 0; blue, 0 }  ][line width=0.75]    (10.93,-3.29) .. controls (6.95,-1.4) and (3.31,-0.3) .. (0,0) .. controls (3.31,0.3) and (6.95,1.4) .. (10.93,3.29)   ;
\draw    (250,330.85) -- (285.53,363.79) ;
\draw [shift={(287,365.15)}, rotate = 222.84] [color={rgb, 255:red, 0; green, 0; blue, 0 }  ][line width=0.75]    (10.93,-3.29) .. controls (6.95,-1.4) and (3.31,-0.3) .. (0,0) .. controls (3.31,0.3) and (6.95,1.4) .. (10.93,3.29)   ;
\draw [color={rgb, 255:red, 245; green, 166; blue, 35 }  ,draw opacity=1 ]   (247.91,210) -- (290.45,84.89) ;
\draw [shift={(291.09,83)}, rotate = 108.78] [color={rgb, 255:red, 245; green, 166; blue, 35 }  ,draw opacity=1 ][line width=0.75]    (10.93,-3.29) .. controls (6.95,-1.4) and (3.31,-0.3) .. (0,0) .. controls (3.31,0.3) and (6.95,1.4) .. (10.93,3.29)   ;
\draw    (250,314.15) -- (285.53,281.21) ;
\draw [shift={(287,279.85)}, rotate = 137.16] [color={rgb, 255:red, 0; green, 0; blue, 0 }  ][line width=0.75]    (10.93,-3.29) .. controls (6.95,-1.4) and (3.31,-0.3) .. (0,0) .. controls (3.31,0.3) and (6.95,1.4) .. (10.93,3.29)   ;

\end{tikzpicture}

    \end{center}
~\\ For the sink $c$, notice that $d_c:\{c_s  \}\rightarrow \mathbb{N}$ is defined as $d(c_s)=|s|+1$ for all $s<1$ and $d(c_s)=0$ of all $s\geq 1$, an eventually trivial map. Now, for the vertices $v_i$ in the maximal cycle, 
$d((v_i)_1)=i:=k_i$ ($i=1,2,3,4$). Hence, the integer $t=1$ in Theorem \ref{theoremy1} (see proof). Now, there is only one maximal cycle which is of length $4$. We verify the formula with a few examples. 

Consider $j=1$ and $s=3$. Then $i\equiv 1+1-3\equiv -1\equiv 3 \pmod{4}$. Thus, $d((v_1)_3)=k_3=3$. Consider $j=4$ and $s=5$. Then $i\equiv 4+1-5\equiv 0\equiv 4\pmod{4}$. Thus, $d((v_4)_5)=k_4=4$. 
\end{example}

The following corollary is a direct consequence of Theorem \ref{theoremy1} and  Corollary \ref{corollaryy2}.


\begin{cor}
    \label{corollaryx1.2} 
Let $E$ be a finite graph with no sources and sinks and let $\mathcal{D}$ be the set of maximal cycles in $E$. 
Then there is a one-to-one correspondence between the set of  vector space distributions of  representations in $\ffRep(\overline{\hat{E}}, \overline{r_{ck}})$ and $\displaystyle\bigoplus_{C\in \mathcal{D}} \mathbb{N}^{|C|}.$
\end{cor}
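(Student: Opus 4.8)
The plan is to exhibit an explicit bijection $\Theta$ from the set of vector space distributions of representations in $\ffRep(\overline{\hat E}, \overline{r_{ck}})$ to $\bigoplus_{C \in \mathcal{D}} \mathbb{N}^{|C|}$, by reading off, for each maximal cycle, one full period of the dimension function. For a representation $\rho$ with dimension function $d = d^\rho$ and a maximal cycle $C \in \mathcal{D}$ of length $n$ with a fixed enumeration $C^0 = \{v_0, \dots, v_{n-1}\}$, I would set $\Theta(d)_C := (d((v_0)_0), \dots, d((v_{n-1})_0)) \in \mathbb{N}^{|C|}$; this is a finite tuple of natural numbers because $\rho$ is locally finite and $\mathcal D$ is finite. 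The content is to show that this tuple, together with the hypotheses, recovers the entire distribution and that every tuple is realized.

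First I would pin down the support and shape of $d$. By Corollary~\ref{corollaryy2}, for every $v \notin C(E)$ one has $d(v_s)=0$ for all $s\in\mathbb Z$, so $d$ is concentrated on the lifts of the maximal cycles. Fixing a maximal cycle $C$ with cycle edges $e_j : v_j \to v_{j+1 \bmod n}$, I would observe that every exit from $C$ lands outside $C(E)$: an edge from $v_j\in C^0$ to a vertex $w$ with $w\in (C')^0$ for a maximal cycle $C'\ne C$ would produce a path from $C$ to $C'$, contradicting maximality of $C'$, while $w\in C^0$ is impossible for an exit. Hence in the balance equation of Corollary~\ref{corollarydimension} every exit term vanishes by the previous step, leaving the single diagonal relation $d((v_j)_s)=d((v_{j+1})_{s+1})$ for all $s\in\mathbb Z$ and all $j$. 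Iterating gives $d((v_j)_s)=d((v_{(j-s)\bmod n})_0)$ and the period-$n$ relation $d((v_j)_{s+n})=d((v_j)_s)$, which is consistent with, and sharpens to all $s$, the eventual form furnished by Theorem~\ref{theoremy1}.

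With this normal form in hand, injectivity of $\Theta$ is immediate: two distributions agreeing on the tuples agree on all lifts of maximal-cycle vertices via $d((v_j)_s)=d((v_{(j-s)\bmod n})_0)$, and both vanish off $C(E)$. For surjectivity I would note that $\Theta$ is additive with respect to the direct sum of representations (dimensions add componentwise), so it suffices to realize the finitely many generators $e_i^{C}$ of the target monoid $\bigoplus_{C}\mathbb N^{|C|}$. The generator $e_i^{C}$ is realized by the rank-one ``diagonal'' representation supported on $\{(v_{(i+s)\bmod n})_s : s\in\mathbb Z\}$ with each such space equal to $\K$, the lifted cycle edges acting as the identity $\K\to\K$ along the diagonal, the corresponding ghost edges as the inverse identity, and all remaining edges (exits and maps to or from vertices outside $C(E)$) acting as zero; this lies in $\ffRep$ since it is one-dimensional in each degree $s$. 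Taking the direct sum of $k_i^{C}$ copies of these over all $i$ and all $C$ realizes an arbitrary tuple, so in fact $\Theta$ is a monoid isomorphism.

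The step I expect to be the main obstacle is the verification in the surjectivity argument that these explicit diagonal representations genuinely satisfy the Cuntz--Krieger relations $\overline{r_{ck}}$ on the covering double graph $\overline{\hat E}$, with the correct index bookkeeping (real edges raising the degree, ghost edges lowering it) and, crucially, that the saturation relation $\sum_{e\in s^{-1}(v_j)} e e^* = v_j$ at a cycle vertex still holds once the exit maps are set to zero, so that only the cycle edge contributes $e_j e_j^* = \mathrm{id}$. The accompanying combinatorial point, that exits from a maximal cycle necessarily leave $C(E)$, is the other place where the precise definition of maximal cycle must be used carefully.
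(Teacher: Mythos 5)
Your overall route is the same as the paper's: Corollary~\ref{corollaryy2} kills the distribution off $C(E)$, the balance equation of Corollary~\ref{corollarydimension} then collapses, on each maximal cycle, to the single diagonal relation $d((v_j)_s)=d((v_{j+1})_{s+1})$ valid for \emph{all} $s$ (a sharpening of the eventual statement of Theorem~\ref{theoremy1}, and exactly what lets you read the tuple off at level $0$), and the correspondence is tuple-by-tuple. The paper states the corollary as an immediate consequence of Theorem~\ref{theoremy1} and Corollary~\ref{corollaryy2}, leaving surjectivity implicit; your injectivity argument and your explicit realisation of the generators of $\bigoplus_{C\in\mathcal D}\mathbb N^{|C|}$ by rank-one diagonal representations, summed up via additivity, is a legitimate and more careful filling-in of that half.

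There is, however, one genuine gap, and it sits exactly where you predicted trouble but not at the relation you checked. Your diagonal representation sets to zero all edges ``to or from vertices outside $C(E)$''. For an edge $g\colon u\to v_j$ \emph{entering} the maximal cycle $C$ from outside ($u\notin C^0$), the lifted Cuntz-Krieger relation (3) with $e=e'=g$ reads $g^*g=r(g)$, i.e.\ $\rho(g_s)\rho((g^*)_s)=\id$ on $\rho((v_j)_s)=\K$; this is impossible when the composite factors through $\rho(u_{s-1})=0$. So if such an edge existed, your construction would not satisfy $\overline{r_{ck}}$ at all (indeed no representation with the prescribed distribution could exist, and surjectivity would fail). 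What rescues the construction is a graph-theoretic fact you never prove: in a source-free graph a maximal cycle receives no edges from outside itself. (If $g\colon u\to v_j$ with $u\notin C^0$, then by Lemma~\ref{lemma6} some cycle $D$ connects to $u$ and hence to $C$; if $D\neq C$ this contradicts maximality of $C$, and if $D=C$ then $u$ lies on a closed path through $C$, hence on a cycle $D'\neq C$ connecting to $C$, again a contradiction.) This is the same fact the paper quietly asserts as $r(s^{-1}(v))\cap C(E)=\varnothing$ inside the proof of Corollary~\ref{corollaryy2}. A similar maximality argument, not impossibility ``by definition'', is also what rules out your case $w\in C^0$ for an exit: a chord of $C$ would create a second cycle connecting to $C$. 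With these two graph-theoretic points supplied, your proof is complete and coincides with the paper's intended argument.
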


Define $\sim$ on the set of cycles in $E$ by $C\sim D$ if $D$ is a \emph{shift} of $C$, that is if $C=e_1e_2\dots  e_n$ then $D=e_ie_{i+1}\dots e_ne_1 \dots e_{i-1}$, for some $1\leq i\leq n$. Then $\sim$ is an equivalence relation and denote by $[C]$ the equivalence class of $C$ and let $\mathcal{M}(E):= \{[C]\mid C\text{ is a maximal cycle in } E\}$. 

\begin{thm}\label{theoremfinalsss2}
 Let $E$ be a finite graph with no sources. Then there is a one-to-one correspondence between the set of vector space distributions in $\fRep^\mathbb{Z}(\overline{\hat{E}}, \overline{r_{ck}})$ and the collection 
\begin{multline*}
\mathcal{N}= \Big \{ \left((\phi_v)_{v\in I}, (\tau_w)_{w\in S}, (k_1,k_2,\cdots,k_{|C|})_{C\in \mathcal{M}(E)} \right ) ~\mid~ \\ \phi_v, \tau_w: \mathbb{Z}\rightarrow \mathbb{N}, ~\tau_w  \textnormal{ is eventually trivial},~ k_i \in \mathbb{N} \Big \}, 
\end{multline*}
 where $I$ and $S$ are the sets of isolated vertices and sinks, respectively. 
\end{thm}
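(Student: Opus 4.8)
The plan is to exhibit the stated bijection as an explicit pair of mutually inverse maps between the set of vector space distributions $\{d^\rho\mid \rho\in\ffRep(\overline{\hat{E}},\overline{\rr})\}$ and $\mathcal N$, and to verify injectivity and surjectivity separately. First I would define the \emph{extraction map} $\Theta$ sending a distribution $d=d^\rho$ to a tuple. A sink $w\in S$ lies on no cycle, so $w\notin C(E)$, and Theorem~\ref{theorem7b} guarantees that $d_w$ is eventually trivial; setting $\tau_w:=d_w$ (after identifying $\{w_i\}_{i\in\mathbb{Z}}$ with $\mathbb{Z}$) produces the sink component. For each shift-class $[C]\in\mathcal M(E)$, Theorem~\ref{theoremy1} yields an integer $t$ and naturals $k_1,\dots,k_{|C|}$ recording the eventual periodic values of $d$ along $C$; after fixing once and for all a representative cycle and a reference index for each class (to absorb the cyclic ambiguity in the choice of $t$ and of a base vertex) this gives a well-defined tuple. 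Since $E$ has no sources, every vertex receives an edge, so there are no isolated vertices and the component $(\phi_v)_{v\in I}$ is vacuous. Thus $\Theta(d)\in\mathcal N$.

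For injectivity I would argue that a distribution is determined by its image under $\Theta$. Suppose $d=d^\rho$ and $d'=d^{\rho'}$ have the same image, and set $\delta:=d-d'\colon\overline{\hat{E}}^0\to\mathbb{Z}$. By Corollary~\ref{corollarydimension} both $d$ and $d'$ satisfy $d(v_i)=\sum_{e\in s^{-1}(v)}d(r(e)_{i+1})$ at every regular vertex, hence $\delta$ satisfies the same linear relation. On sinks $\delta$ vanishes since $\tau_w=\tau'_w$; and since by Theorems~\ref{theorem7b} and~\ref{theoremy1} both $d,d'$ are eventually trivial off the maximal cycles and share the same eventual periodic pattern on them, $\delta(u_i)=0$ for all $u$ once $i$ is large. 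A reverse induction on the index then finishes: if $i_0$ were the largest index at which $\delta$ is nonzero at some vertex $v$, then $v$ cannot be a sink, so it is regular and $\delta(v_{i_0})=\sum_{e}\delta(r(e)_{i_0+1})=0$ because $\delta$ already vanishes at index $i_0+1$, a contradiction. Hence $\delta\equiv 0$ and $\Theta$ is injective.

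For surjectivity the crucial geometric input is that \emph{a maximal cycle receives no edges from outside itself}: if $v\notin C^0$ admitted an edge into $C$, then tracing backwards from $v$ (possible since $E$ has no sources) through the finite graph produces a cycle $D$ with a path to $C$, forcing either a second cycle to connect to the maximal cycle $C$ or $v\in C^0$, both contradictions. Granting this, given a tuple in $\mathcal N$ I would build $d$ as follows. Choose $N$ past the triviality thresholds of all $\tau_w$; for $i\ge N$ set $d$ to the prescribed periodic value on each maximal cycle, to $0$ on every other non-sink vertex, and to $\tau_w(i)$ on sinks. The structural fact makes this consistent with the flow relation for $i\ge N$: a non-cycle vertex can only have nonzero targets on a maximal cycle, which is forbidden, while a maximal-cycle vertex sees only its cyclic successor and exits into eventually-trivial vertices. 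One then extends $d$ to all indices by the downward recursion $d(v_i):=\sum_{e\in s^{-1}(v)}d(r(e)_{i+1})$ on regular vertices and $d(w_i):=\tau_w(i)$ on sinks, which is well-defined by downward induction on the index because the flow relation refers only to the strictly larger index $i+1$, and which reproduces the prescribed data under $\Theta$.

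It remains to realize $d$ by an actual $\rho\in\ffRep(\overline{\hat{E}},\overline{\rr})$, which I regard as the main obstacle. Using the splitting $\rho(v_i)\cong\bigoplus_{e\in s^{-1}(v)}\rho(r(e)_{i+1})$ forced by the Cuntz--Krieger relations~(\ref{mainprop}), I would put $\rho(w_i)=\K^{\tau_w(i)}$ on sinks, posit $\rho((v_m)_i)$ to be $\K$ of the prescribed periodic dimension with the cycle edges realized as identity isomorphisms and the exits as zero maps at all indices $i\ge N$ on each maximal cycle, and then define $\rho$ at lower indices by the same direct-sum recursion, taking the structural edges to be the canonical inclusions and projections and the ghost edges to be their adjoints. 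The verification that relations~(3) and~(4) of Definition~\ref{llkas} hold at every vertex is routine given the direct-sum description, and local finiteness is automatic because $E^0$ is finite, so each $\bigoplus_u\rho(u_i)$ is a finite sum of finite-dimensional spaces; by construction $d^\rho=d$, so $\Theta$ is surjective. The points demanding the most care are the Cuntz--Krieger checks through the transient regime on maximal cycles, where exit contributions accumulate as the index descends, and the bookkeeping that fixes the cyclic normalization of $(k_1,\dots,k_{|C|})$ so that the correspondence is genuinely well-defined on $\mathcal M(E)$; specialising to graphs with no sinks recovers Corollary~\ref{corollaryx1.2}.
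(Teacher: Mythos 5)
Your proposal is correct and follows essentially the same route as the paper's proof: read off the tuple via Theorems~\ref{theorem7b} and~\ref{theoremy1}, prescribe the distribution at all sufficiently large indices (periodic on maximal cycles, $\tau_w$ on sinks, zero elsewhere), and propagate downward through the dimension relation of Corollary~\ref{corollarydimension}. Where you go beyond the paper is only in rigor: you actually prove the key geometric fact that, in the absence of sources, no vertex outside a maximal cycle emits an edge into it (the paper asserts this without proof in the line ``If $w\notin C(E)$, then $r(e)\notin C(E)$''), you give an explicit downward-induction injectivity argument, and you flag the cyclic normalization needed to make $(k_1,\dots,k_{|C|})$ well defined on the shift-class $[C]$ --- all points the paper leaves implicit.
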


\begin{proof}

For any vector space distribution, by Theorems~\ref{theorem7b} and \ref{theoremy1}, we indeed obtain a unique element of $\mathcal{N}$. 

Suppose we are given the collection $\mathcal{N}$. We will construct a vector space distribution for a  $\mathbb{Z}$-locally finite representation $\varphi$ obtained from each of the elements in $\mathcal{N}$.

For each isolated vertex $u$, we have a mapping $\psi_u: \mathbb{Z} \rightarrow \mathbb{N}$. Define
 $$\varphi_u:\{u_i\}_{i\in \mathbb{Z}}
 \rightarrow \fvc \K \text{~~by~~} \varphi_u(u_i)=\K^{\psi_u(i)}.$$ For each sink $v$, we have an eventually trivial map $\tau_v:\mathbb{Z} \rightarrow \mathbb{N}$. Define 
 $$\varphi_v:\{v_i\}_{i\in \mathbb{Z}}\rightarrow \fvc \K \text{~~by~~} \varphi_v(v_i)=\K^{\tau_v(i)}.$$ 
 Now, for each $v\in \textnormal{Sink}(E)$, there exists $t_v\in \mathbb{Z}$ such that $\tau_v(i)=0$ for all $i\geq t_v$. We let $t=\max \{ t_v \mid v\in \textnormal{Sink}(E) \}$.
For each $w\not \in C(E)\cup \textnormal{Sink}(E)$, we define 
 $$\varphi_w:\{w_i\}_{i\in \mathbb{Z}}\rightarrow \fvc \K \text{~~by~~} \varphi_w(w_i)=0 \text{~for~all~} i\geq t.$$ 
Accordingly, we have $\varphi(w_i)=0$, for all $i\geq t$ and for all non-isolated vertex $w\not \in C(E)$.

Let $C=e_0e_1\cdots e_{n-1}$ be a maximal cycle and $C^0=\{v_0, v_1, \cdots , v_{n-1}  \}$ with $s(e_{i})=v_{i}=r(e_{i-1})$, where $0< i < n-1$ and $s(e_{n-1})=v_{n-1}$ and $r(e_{n-1})=s(e_0)=v_0$.
 For each $i=0,1,\cdots,n-1$, let $p_i$ be the path in $\overline{\hat{E}}$ obtained from the edges $e_0, e_1,\cdots, e_{n-1}$ for which $s(p_i)=(v_i)_t$. That is, 
$$p_i=(e_i)_t(e_{i+1})_{t+1}(e_{i+2})_{t+2}\cdots (e_{n-1})_{t+n-1-i}(e_0)_{t+n-i}(e_1)_{t+n-i+1}\cdots . $$ 
Since $C$ is a cycle, the paths $p_i$ are mutually disjoint, that is, ${p_i}^0\cap {p_j}^0=\varnothing$, for all $i\neq j$. Now, for the equivalence class of the maximal cycle $C$, there exists $(k_0, k_1, \cdots, k_{n-1})\in \mathbb{N}^n$. We define $$\varphi_C:=\varphi_{C^0}\mid \{(v_i)_j \mid i< n, j\in \mathbb{Z}\}\rightarrow \fvc \K ~~\text{~~by~~}~~ \varphi_C(u)=\K^{k_i},$$
for every $u\in p_i^0$ and  $i <n$. 
That is in particular, $\varphi_C((v_i)_t)=\varphi_C(s(p_i))=\K^{k_i}$. 
Using similar arguments as in the proof of Theorem \ref{theoremy1}, for every $s\geq t$ and $i,j=0,1,\cdots, n-1$, we have
    $$\varphi_C((v_i)_s)=\K^{k_j}, \text{~~for~~}j\equiv i+t-s\pmod n.$$
We have defined $\varphi$ for $s\geq t$ for all vertices and for all $i\in \mathbb{Z}$ for sinks and isolated vertices. We now show this is enough and $\varphi:\overline{\hat{E}}^0\rightarrow \mathbb{N}$ defines a vector space distribution satisfying $\overline{r_{ck}}$ in $\overline{\hat{E}}$.

Let $w\in E^0$ and $s \geq t$. Then $d(r(e)_{s+1})=0$, for all $e\in s^{-1}({w})\setminus r^{-1}(C(E))$. If $w\not \in C(E)$, then $r(e)\not \in C(E)$, for every $e\in s^{-1}(w)$. Thus, $d(w_s)=0$ and $d(r(e)_{s+1})=0$, for all $e\in s^{-1}(w)$. Hence, $$d(w_s)=0=\sum_{e\in s^{-1}({w})} d(r(e)_{s+1}).$$ 
If $w\in C(E)$, then there exists a maximal cycle $D=d_0d_1\cdots d_{n-1}$ with $w\in D^0$. Let $D^0=\{u_0,u_1, \cdots, u_{n-1} \}$ such that $s(d_i)=u_i=r(d_{i-1}) 
 \pmod{n}$. Then $w=u_q$, for some $0\leq q\leq n-1$. Without loss of generality, we assume that $q<n-1$. For the equivalence class of the maximal cycle $D$, there exists $(m_0,m_1,\cdots , m_{n-1})\in \mathbb{N}^n$ and have 
$$\varphi_D((u_i)_s)=\K^{m_j} \text{~~for~~}j\equiv i+t-s\pmod{n},$$ for all $s\geq t$ and $0\leq i,j\leq n-1$.
By definition, $D$ is not connected to any other maximal cycle. In particular, we have $$r(s^{-1}(\{w\}))\cap C(E)=r(s^{-1}(\{u_q\}))\cap C(E)=\{u_{q+1}\}.$$
This implies that for every $e\in s^{-1}(w)$ with $r(e)\neq u_{q+1}$,  we have $d(r(e)_{s+1})=0$. 

Now,  
$\varphi_D((u_q)_s)=\K^{m_j}$, for some $j\equiv q+t-s \pmod n$. Then $j\equiv (q+1)+t-(s+1) \pmod n$. Hence, $\varphi_D((u_{q+1})_{s+1})=\K^{m_j}$, that is, $d((u_{q+1})_{s+1})=m_j=d((u_q)_s)$. Hence, 
\begin{align*}
d(w_s) &= d((u_q)_s)\\&= d((u_{q+1})_{s+1})\\&= d((u_{q+1})_{s+1})+\sum \{ d(r(e)_{s+1}):{e\in s^{-1}({w}), r(e)\neq u_{q+1} }\}\\
&= \sum_{e\in s^{-1}({w})} d(r(e)_{s+1}).
\end{align*}
Hence, for all $s\geq t$, the necessary conditions for a vector space distribution of a representation is satisfied. 

Let $w\in E^0$ and we consider $w_{t-1}$. If $w$ is a sink, then $d(w_{t-1})=\tau_w(t-1)$. If $w$ is not a sink, then $s^{-1}(w)=\{f_1, f_2, \cdots, f_s\}$, for some $f_i\in  E^1$. Define
$$\varphi(w_{t-1})=\bigoplus_{i=1}^s \varphi(r(f_i)_t). $$
Note that $\varphi(r(f_i)_t)$ is determined for all $i=1,2,\cdots , s$ and 
$$d(w_{t-1})=\sum_{i=1}^s d(r(f_i)_t). $$
Hence, $w_{t-1}$ is determined for all $w\in E^0$. Similar process for $w_{t-2}$ and continuing, we shall have  
$$d(w_{s})=\sum_{e\in s^{-1}(v)} d(r(e)_{s+1}), $$
for all $s\in \mathbb{Z}$ and $w\in E^0$. Accordingly, we obtain a suitable vector space distribution. \end{proof}

\begin{example}
    Consider the graph $E$ below. Define $\psi_x:\mathbb{Z}\rightarrow \mathbb{N}$ by $\psi_x(i)=|i|$,  for all $i\in \mathbb Z$, $\tau_w: \mathbb Z \rightarrow \mathbb N$ be an eventually trivial mapping with $\tau_w(i)=1$, for all $i\leq 0$ and $\tau_w(i)=0$, for all $i>0$, and for the maximal cycle with vertices $\{v,u\}$, let us consider $(2,3)\in \mathbb N^2$.   We show that given these mappings and tuple, we can obtain a vector space distribution of a representation in $\fRep^\mathbb{Z}(\overline{\hat{E}}, \overline{r_{ck}})$.

\tikzset{every picture/.style={line width=0.75pt}} 
\begin{center}

\begin{tikzpicture}[x=0.75pt,y=0.75pt,yscale=-1,xscale=1]

\draw    (65,45) .. controls (77.74,15.6) and (112.57,15.97) .. (126.19,43.29) ;
\draw [shift={(127,45)}, rotate = 245.85] [color={rgb, 255:red, 0; green, 0; blue, 0 }  ][line width=0.75]    (10.93,-3.29) .. controls (6.95,-1.4) and (3.31,-0.3) .. (0,0) .. controls (3.31,0.3) and (6.95,1.4) .. (10.93,3.29)   ;
\draw    (127,58) .. controls (114.26,86.42) and (79.43,88.91) .. (65.81,59.82) ;
\draw [shift={(65,58)}, rotate = 67.25] [color={rgb, 255:red, 0; green, 0; blue, 0 }  ][line width=0.75]    (10.93,-3.29) .. controls (6.95,-1.4) and (3.31,-0.3) .. (0,0) .. controls (3.31,0.3) and (6.95,1.4) .. (10.93,3.29)   ;
\draw    (138,49) -- (174.16,33.78) ;
\draw [shift={(176,33)}, rotate = 157.17] [color={rgb, 255:red, 0; green, 0; blue, 0 }  ][line width=0.75]    (10.93,-3.29) .. controls (6.95,-1.4) and (3.31,-0.3) .. (0,0) .. controls (3.31,0.3) and (6.95,1.4) .. (10.93,3.29)   ;

\draw (49,53.4) node [anchor=north west][inner sep=0.75pt]  [font=\normalsize]  {$u$};
\draw (137,54.4) node [anchor=north west][inner sep=0.75pt]    {$v$};
\draw (190,13.4) node [anchor=north west][inner sep=0.75pt]    {$w$};
\draw (195,66.4) node [anchor=north west][inner sep=0.75pt]    {$x$};
\draw (122,45.4) node [anchor=north west][inner sep=0.75pt]  [font=\large]  {$\bullet $};
\draw (6,44.16) node [anchor=north west][inner sep=0.75pt]    {$E:$};
\draw (58,46.4) node [anchor=north west][inner sep=0.75pt]  [font=\large]  {$\bullet $};
\draw (179,23.4) node [anchor=north west][inner sep=0.75pt]  [font=\large]  {$\bullet $};
\draw (187,59.4) node [anchor=north west][inner sep=0.75pt]  [font=\large]  {$\bullet $};

\end{tikzpicture}
\end{center}

Now, we define $\varphi:E\rightarrow \vc \K$ by having $\varphi(v_0)=\K^2$ and $\varphi(u_0)=\K^3$, and for all $i$, $\varphi(w_i)=\K^{\tau_w{(i)}}$ and $\varphi(x_i)=\K^{\psi_x(i)}$. As seen in the figure below, we obtain a possible vector space distribution.

\begin{center}

\tikzset{every picture/.style={line width=0.75pt}} 

\begin{tikzpicture}[x=0.75pt,y=0.75pt,yscale=-1,xscale=1]

\draw (103.6,57.69) node  [font=\Large]  {$.\ .\ .$};
\draw (101.6,101.69) node  [font=\Large]  {$.\ .\ .$};
\draw (101.6,147.69) node  [font=\Large]  {$.\ .\ .$};
\draw (105.6,191.69) node  [font=\Large]  {$.\ .\ .$};
\draw (445.6,56.75) node  [font=\Large]  {$.\ .\ .$};
\draw (447.6,100.75) node  [font=\Large]  {$.\ .\ .$};
\draw (448.6,146.75) node  [font=\Large]  {$.\ .\ .$};
\draw (448.6,196.75) node  [font=\Large]  {$.\ .\ .$};
\draw (198,51.4) node [anchor=north west][inner sep=0.75pt]    {$2$};
\draw (128,52.4) node [anchor=north west][inner sep=0.75pt]    {$4$};
\draw (266,51.4) node [anchor=north west][inner sep=0.75pt]    {$3$};
\draw (337,51.4) node [anchor=north west][inner sep=0.75pt]    {$2$};
\draw (407,52.4) node [anchor=north west][inner sep=0.75pt]    {$3$};
\draw (197,97.4) node [anchor=north west][inner sep=0.75pt]    {$4$};
\draw (128,98.4) node [anchor=north west][inner sep=0.75pt]    {$3$};
\draw (265,97.4) node [anchor=north west][inner sep=0.75pt]    {$2$};
\draw (336,97.4) node [anchor=north west][inner sep=0.75pt]    {$3$};
\draw (406,97.4) node [anchor=north west][inner sep=0.75pt]    {$2$};
\draw (196,142.4) node [anchor=north west][inner sep=0.75pt]    {$1$};
\draw (126,142.4) node [anchor=north west][inner sep=0.75pt]    {$1$};
\draw (264,142.4) node [anchor=north west][inner sep=0.75pt]    {$1$};
\draw (335,142.4) node [anchor=north west][inner sep=0.75pt]    {$0$};
\draw (405,142.4) node [anchor=north west][inner sep=0.75pt]    {$0$};
\draw (197,190.4) node [anchor=north west][inner sep=0.75pt]    {$1$};
\draw (127,188.4) node [anchor=north west][inner sep=0.75pt]    {$2$};
\draw (261,190.4) node [anchor=north west][inner sep=0.75pt]    {$0$};
\draw (336,190.4) node [anchor=north west][inner sep=0.75pt]    {$1$};
\draw (406,190.4) node [anchor=north west][inner sep=0.75pt]    {$2$};
\draw (188,22.4) node [anchor=north west][inner sep=0.75pt]  [color={rgb, 255:red, 237; green, 9; blue, 13 }  ,opacity=1 ]  {$-1$};
\draw (265,20.4) node [anchor=north west][inner sep=0.75pt]  [color={rgb, 255:red, 237; green, 9; blue, 13 }  ,opacity=1 ]  {$0$};
\draw (337,20.4) node [anchor=north west][inner sep=0.75pt]  [color={rgb, 255:red, 237; green, 9; blue, 13 }  ,opacity=1 ]  {$1$};
\draw (406,20.4) node [anchor=north west][inner sep=0.75pt]  [color={rgb, 255:red, 237; green, 9; blue, 13 }  ,opacity=1 ]  {$2$};
\draw (118,22.4) node [anchor=north west][inner sep=0.75pt]  [color={rgb, 255:red, 237; green, 9; blue, 13 }  ,opacity=1 ]  {$-2$};
\draw (57,54.4) node [anchor=north west][inner sep=0.75pt]  [color={rgb, 255:red, 237; green, 7; blue, 34 }  ,opacity=1 ]  {$u$};
\draw (54,99.4) node [anchor=north west][inner sep=0.75pt]  [color={rgb, 255:red, 237; green, 7; blue, 34 }  ,opacity=1 ]  {$v$};
\draw (54,144.4) node [anchor=north west][inner sep=0.75pt]  [color={rgb, 255:red, 237; green, 7; blue, 34 }  ,opacity=1 ]  {$w$};
\draw (56,191.4) node [anchor=north west][inner sep=0.75pt]  [color={rgb, 255:red, 237; green, 7; blue, 34 }  ,opacity=1 ]  {$x$};
\draw (102.6,28.69) node  [font=\Large,color={rgb, 255:red, 237; green, 7; blue, 34 }  ,opacity=1 ]  {$.\ .\ .$};
\draw (444.6,27.75) node  [font=\Large,color={rgb, 255:red, 237; green, 7; blue, 34 }  ,opacity=1 ]  {$.\ .\ .$};
\draw    (143,65.87) -- (192.32,98.04) ;
\draw [shift={(194,99.13)}, rotate = 213.11] [color={rgb, 255:red, 0; green, 0; blue, 0 }  ][line width=0.75]    (10.93,-3.29) .. controls (6.95,-1.4) and (3.31,-0.3) .. (0,0) .. controls (3.31,0.3) and (6.95,1.4) .. (10.93,3.29)   ;
\draw    (213,65.18) -- (260.35,97.69) ;
\draw [shift={(262,98.82)}, rotate = 214.47] [color={rgb, 255:red, 0; green, 0; blue, 0 }  ][line width=0.75]    (10.93,-3.29) .. controls (6.95,-1.4) and (3.31,-0.3) .. (0,0) .. controls (3.31,0.3) and (6.95,1.4) .. (10.93,3.29)   ;
\draw    (281,64.91) -- (331.33,97.99) ;
\draw [shift={(333,99.09)}, rotate = 213.31] [color={rgb, 255:red, 0; green, 0; blue, 0 }  ][line width=0.75]    (10.93,-3.29) .. controls (6.95,-1.4) and (3.31,-0.3) .. (0,0) .. controls (3.31,0.3) and (6.95,1.4) .. (10.93,3.29)   ;
\draw    (352,65) -- (401.34,97.89) ;
\draw [shift={(403,99)}, rotate = 213.69] [color={rgb, 255:red, 0; green, 0; blue, 0 }  ][line width=0.75]    (10.93,-3.29) .. controls (6.95,-1.4) and (3.31,-0.3) .. (0,0) .. controls (3.31,0.3) and (6.95,1.4) .. (10.93,3.29)   ;
\draw    (143,99.96) -- (193.34,66.16) ;
\draw [shift={(195,65.04)}, rotate = 146.12] [color={rgb, 255:red, 0; green, 0; blue, 0 }  ][line width=0.75]    (10.93,-3.29) .. controls (6.95,-1.4) and (3.31,-0.3) .. (0,0) .. controls (3.31,0.3) and (6.95,1.4) .. (10.93,3.29)   ;
\draw    (212,99) -- (261.34,66.11) ;
\draw [shift={(263,65)}, rotate = 146.31] [color={rgb, 255:red, 0; green, 0; blue, 0 }  ][line width=0.75]    (10.93,-3.29) .. controls (6.95,-1.4) and (3.31,-0.3) .. (0,0) .. controls (3.31,0.3) and (6.95,1.4) .. (10.93,3.29)   ;
\draw    (280,99.25) -- (332.31,65.83) ;
\draw [shift={(334,64.75)}, rotate = 147.43] [color={rgb, 255:red, 0; green, 0; blue, 0 }  ][line width=0.75]    (10.93,-3.29) .. controls (6.95,-1.4) and (3.31,-0.3) .. (0,0) .. controls (3.31,0.3) and (6.95,1.4) .. (10.93,3.29)   ;
\draw    (143,111.82) -- (191.32,143.09) ;
\draw [shift={(193,144.18)}, rotate = 212.91] [color={rgb, 255:red, 0; green, 0; blue, 0 }  ][line width=0.75]    (10.93,-3.29) .. controls (6.95,-1.4) and (3.31,-0.3) .. (0,0) .. controls (3.31,0.3) and (6.95,1.4) .. (10.93,3.29)   ;
\draw    (212,111.04) -- (259.34,142.84) ;
\draw [shift={(261,143.96)}, rotate = 213.89] [color={rgb, 255:red, 0; green, 0; blue, 0 }  ][line width=0.75]    (10.93,-3.29) .. controls (6.95,-1.4) and (3.31,-0.3) .. (0,0) .. controls (3.31,0.3) and (6.95,1.4) .. (10.93,3.29)   ;
\draw    (280,110.79) -- (330.32,143.13) ;
\draw [shift={(332,144.21)}, rotate = 212.74] [color={rgb, 255:red, 0; green, 0; blue, 0 }  ][line width=0.75]    (10.93,-3.29) .. controls (6.95,-1.4) and (3.31,-0.3) .. (0,0) .. controls (3.31,0.3) and (6.95,1.4) .. (10.93,3.29)   ;
\draw    (351,110.87) -- (400.32,143.04) ;
\draw [shift={(402,144.13)}, rotate = 213.11] [color={rgb, 255:red, 0; green, 0; blue, 0 }  ][line width=0.75]    (10.93,-3.29) .. controls (6.95,-1.4) and (3.31,-0.3) .. (0,0) .. controls (3.31,0.3) and (6.95,1.4) .. (10.93,3.29)   ;
\draw    (351,99.3) -- (402.31,66.77) ;
\draw [shift={(404,65.7)}, rotate = 147.63] [color={rgb, 255:red, 0; green, 0; blue, 0 }  ][line width=0.75]    (10.93,-3.29) .. controls (6.95,-1.4) and (3.31,-0.3) .. (0,0) .. controls (3.31,0.3) and (6.95,1.4) .. (10.93,3.29)   ;

\end{tikzpicture}

\end{center}
\end{example}

Recall that the Graded Classification Conjecture~\cite{mathann, hazdyn} states that any order-preserving $\mathbb Z[x,x^{-1}]$-module isomorphism 
$K_0^{\gr}(L_\K(E)) \cong K_0^{\gr}(L_\K(F))$ gives a graded category equivalence $\Gr L_\K(E) \approx_{\gr} \Gr L_\K(F)$.  We are in a position to prove our main theorem. 

\begin{thm}\label{mainthfer}
Let $E$ and $F$ be finite graphs with no sources and sinks and let $L_\K(E)$ and $L_\K(F)$ be their associated Leavitt path algebras with coefficient in a field $\K$, respectively.  If there is an order-preserving $\mathbb Z[x,x^{-1}]$-module isomorphism $K_0^{\gr}(L_\K(E)) \cong K_0^{\gr}(L_\K(F))$,  then there is an equivalence, 
\[\fModd L_\K(E) \approx \fModd L_\K(F)\] as well as graded equivalences
\[\fGr L_\K(E) \approx_{\gr} \fGr L_\K(F) \text { and }  \fGr[\mathbb{Z}]L_\K(E)\approx_{\gr}
    \fGr[\mathbb{Z}] L_\K(F).\]

\end{thm}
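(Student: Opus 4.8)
The plan is to reduce everything to the combinatorics of maximal cycles via the talented monoid, and then to realise the relevant representation categories as products indexed by those cycles. First I would translate the hypothesis into monoid language: since $K_0^{\gr}(L_\K(E))$ is the group completion of $\mathcal V^{\gr}(L_\K(E))\cong T_E$ and the cancellative monoid $\mathcal V^{\gr}$ embeds as the positive cone, an order-preserving $\mathbb Z[x,x^{-1}]$-module isomorphism $K_0^{\gr}(L_\K(E))\cong K_0^{\gr}(L_\K(F))$ restricts to a $\mathbb Z$-monoid isomorphism of positive cones, i.e. $T_E\cong T_F$. This isomorphism induces a lattice isomorphism of $\mathbb Z$-order-ideals and identifies the quotients $T_E/I\cong T_F/\phi(I)$; applying Theorem~\ref{talmax}(1) I therefore obtain a bijection between the maximal cycles of $E$ and those of $F$ that \emph{preserves cycle length}. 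Because $E$ and $F$ have no sinks there are no maximal sinks and no isolated vertices, so Theorem~\ref{talmax}(2) is vacuous and the only data to match are the maximal cycles together with their lengths.

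Next I would pass to representations through the equivalences (\ref{xxxyyy}) and establish a product decomposition. For the locally finite graded case, Corollary~\ref{corollaryy2} shows that every $\rho\in\ffRep(\overline{\hat E},\overline{\rr})$ vanishes off $C(E)$; since distinct maximal cycles are mutually disjoint and, by maximality, joined by no path, there are no real or ghost edges of $\hat E$ between vertices of different maximal cycles, while all exit edges land in the zero part of $\rho$. Consequently restriction to $C(E)$ gives an equivalence $\ffRep(\overline{\hat E},\overline{\rr})\approx\prod_{[C]\in\mathcal{M}(E)}\ffRep(\overline{\hat C},\overline{\rr})$, each $C$ being regarded as a stand-alone cycle. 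The analogue for the finite case follows from Proposition~\ref{firstmanin}, whose hypothesis holds as there are no sources, giving $\fRep(\hat E,\rr)\approx\prod_{[C]}\fRep(\hat C,\rr)$. For the finite graded case, Proposition~\ref{firstmanin2} together with the absence of isolated vertices forces every object of $\fRep(\overline{\hat E},\overline{\rr})$ to be zero, so $\fGr L_\K(E)$ and $\fGr L_\K(F)$ are both trivial and automatically graded equivalent.

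It then remains to compare the local categories. For a stand-alone cycle $C_n$ of length $n$ the Cuntz--Krieger relations force the edge and ghost-edge maps to be mutually inverse isomorphisms, so $\fRep(\hat C_n,\rr)$ is equivalent to the category of pairs (finite-dimensional vector space, automorphism), \emph{independently of} $n$; hence the ungraded equivalence $\fModd L_\K(E)\approx\fModd L_\K(F)$ follows already from the equality of the numbers of maximal cycles. On the graded side any two cycles of the same length are isomorphic as $\mathbb Z$-graphs, so their covering double graphs are $\mathbb Z$-equivariantly isomorphic, yielding a graded equivalence $\ffRep(\overline{\hat C},\overline{\rr})\approx\ffRep(\overline{\hat D},\overline{\rr})$ whenever $|C|=|D|$; matching cycles through the length-preserving bijection and taking the product of these equivalences produces the desired graded equivalence, which commutes with the shift functors $\mathcal T_k$ by Proposition~\ref{mainlemma1}. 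Translating back through (\ref{xxxyyy}) then yields all three assertions.

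I expect the main obstacle to be the careful verification that these decompositions are genuine equivalences of categories (full, faithful, and essentially surjective), rather than mere bijections of the objects already classified in Theorems~\ref{theorem7b}, \ref{theoremy1} and \ref{theoremfinalsss2}: one must check that a natural transformation between cycle-supported representations has trivial naturality squares along exit and entry edges, and that extending a stand-alone cycle representation by zero still satisfies the full Cuntz--Krieger relations at the cycle vertices of $E$. The genuinely graded point is that the $\mathbb Z$-action cyclically permutes the $n$ lifted components of a length-$n$ cycle with period \emph{exactly} $n$; this is precisely where the length-preservation in Theorem~\ref{talmax}, and not a mere bijection of maximal cycles, is indispensable.
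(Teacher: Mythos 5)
Your proposal is correct, and its first half---passing from the $K_0^{\gr}$-isomorphism to a $\mathbb Z$-monoid isomorphism $T_E\cong T_F$ and then, via Theorem~\ref{talmax}, to a length-preserving bijection between maximal cycles---is exactly the paper's argument. Where you genuinely differ is in how that bijection is converted into category equivalences. The paper transfers representations directly and globally: given $\rho$ on $\hat E$ (resp.\ $\overline{\hat E}$), it defines $\rho'$ on $\hat F$ (resp.\ $\overline{\hat F}$) by imposing the same vector-space distribution on corresponding cycles (using Proposition~\ref{firstmanin}, Corollary~\ref{corollaryy2} and Theorems~\ref{theoremy1}, \ref{theoremfinalsss2}) and carrying over the same linear maps, then conjugates the resulting isomorphism of representation categories through the equivalences of \S\ref{lbvyfgjdh}. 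You instead decompose each representation category as a product over maximal cycles and compare factor by factor, which buys two things the paper leaves implicit: first, the ungraded factors are equivalent to pairs (finite-dimensional space, automorphism) \emph{independently of cycle length}, so $\fModd L_\K(E)\approx\fModd L_\K(F)$ needs only the number of maximal cycles, and length preservation is seen to be indispensable precisely for the $\fGr[\mathbb{Z}]$ statement; second, under the stated hypotheses $\fGr L_\K(E)$ is the trivial category by Proposition~\ref{firstmanin2}, which is sharper than the paper's ``similarly''. One verification you flag deserves emphasis, since it is the only place your decomposition could break: extension by zero would \emph{violate} the relation $e^*e=r(e)$ if a maximal cycle received an edge $e$ from a vertex outside it (the relation would force the cycle vertex's space to vanish). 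This cannot happen here: by Lemma~\ref{lemma6} and maximality, in a finite graph with no sources any vertex feeding into a maximal cycle would itself lie on a cycle connecting to that maximal cycle, a contradiction. With that check supplied, your product decomposition and the paper's direct transfer are equally valid routes to all three equivalences.
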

\begin{proof}
Since $K_0^{\gr}(L_\K(E)) \cong K_0^{\gr}(L_\K(F))$ as an order-preserving $\mathbb Z[x,x^{-1}]$-module isomorphism then \[T_E\cong \mathcal V^{\gr}(L_\K(E))  \cong  \mathcal V^{\gr}(L_\K(F)) \cong T_F,\] as $\mathbb Z$-monoids (see~\S\ref{lpat}). 

  Since $T_E\cong T_F$,  by Theorem~\ref{talmax}, there is a one-to-one correspondence between the maximal cycles $E$ and $F$ with the same length. Let $\{C_1,\dots,C_k\}$ and  $\{C_1',\dots,C_k'\}$ be the sets of maximal cycles in $E$ and $F$, respectively, where $C_i$ corresponds to $C_i'$, $1\leq i \leq k$. Thus $|C_i| = |C_i'|$. 

Next we show that there is an equivalence of categories $\fRep(\hat E, \rr) \cong \fRep(\hat F, \rr)$. Consider  $\rho: \hat E \rightarrow \vc \K$ in  $\fRep(\hat E, \rr)$. By Proposition~\ref{firstmanin}, for $v\in \hat E^0$  we have 
\begin{equation*}
\rho(v)=
 \left\{ \begin{array}{ll}
 \K^{n_{C_i}} & \textrm{if $v\in C_i$},\\
  0 & \textrm{otherwise,}
  \end{array} \right.
\end{equation*}
for a fixed $n_{C_i}\in \mathbb N$. 

Define  $\rho': \hat F \rightarrow \vc \K$ in  $\fRep(\hat F, \rr)$ as follows: for $v\in \hat F^0$, set 
 \begin{equation*}
\rho'(v)=
 \left\{ \begin{array}{ll}
 \K^{n_{C_i}} & \textrm{if $v\in C'_i$},\\
 0 & \textrm{otherwise.}
  \end{array} \right.
\end{equation*}
i.e., the distribution of vector spaces on the vertices of $F$ are exactly those of $E$. For the linear transformations, we assign the same linear transformations in $\rho$ to  the corresponding source and range vector space in $\rho'$. The correspondence 
\begin{align*}
\fRep(\hat E, \rr) &\longrightarrow \fRep(\hat F, \rr),\\
\rho &\longmapsto \rho'
\end{align*}
induces a category equivalence (indeed a category isomorphism). By Propositions~\ref{mainrain} for the case of Leavitt path algebras (see~\S\ref{lbvyfgjdh}), we have 
\[\fModd L_\K(E) \cong \fRep(\hat E, \rr) \cong \fRep(\hat F, \rr)\cong \fModd L_\K(F).\]

Similarly one can establish a graded isomorphism $\fRep(\overline{\hat{E}}, \overline \rr) \cong \fRep(\overline{\hat{F}}, \overline \rr)$ and there  a graded  equivalence 
\[\fGr L_\K(E) \approx \fRep(\overline{\hat{E}}, \overline \rr) \cong \fRep(\overline{\hat{F}}, \overline \rr) \cong \fGr L_\K(F).\] 

For the final equivalence, for each $i$, let $C_i^0=\{z_1^{(i)}, z_2^{(i)}, \cdots, z_{n_i}^{(i)}\}$ and ${C_{i}'}^0=\{w_1^{(i)}, w_2^{(i)}, \cdots, w_{n_i}^{(i)}\}$. 
Let $\varphi \in \fRep^{\mathbb{Z}}(\overline{\hat{E}}, \overline{r_{ck}})$. Then by Corollary \ref{corollaryy2}, $\varphi_v=0$, for all $v\not \in C(E)$. Now for each $i$, let $(k_1^{(i)}, k_2^{(i)}, \cdots , k_{n_i}^{(i)})$ be the $n_i$-tuple as in Theorem \ref{theoremfinalsss2}. That is, in Theorem \ref{theoremy1}, $d_{\varphi}((z_j^{(i)})_t)=k_j^{(i)}$, for all $t\in \mathbb{Z}$. Define $\varphi':\overline{\hat{F}}\rightarrow \fvc \K$ with $d_{\varphi'}((w_j^{(i)})_t)=d_{\varphi}((z_j^{(i)})_t)=k_j^{(i)}$ and such that each linear map in $\varphi$ be carried in $\varphi'$ on corresponding source and range vector space. Also, by Corollary \ref{corollaryy2}, $\varphi'_v=0$, for all $v\not \in C(F)$. Thus, this gives us an equivalence of categories $\fRep^{\mathbb{Z}}(\overline{\hat{E}}, \overline{r_{ck}})\approx \fRep^{\mathbb{Z}}(\overline{\hat{F}}, \overline{r_{ck}})$. Combining this with Proposition~\ref{mainlemma1} for the case of Leavitt path algebras, gives a graded equivalence of categories $$\fGr[\mathbb{Z}] L_\K(E)\approx_{\gr} \fGr[\mathbb{Z}]L_\K(F).$$ 
\end{proof}

\begin{rmk}
In fact for the equivalences $\fModd L_\K(E) \approx \fModd L_\K(F)$ as well as the graded equivalences
$\fGr L_\K(E) \approx_{\gr} \fGr L_\K(F)$, Theorem~\ref{mainthfer} is valid for graphs where sources are isolated vertices, i.e, sinks are allowed. The statements needed to prove these facts are all written with this assumption. However for the equivalence   $\fGr[\mathbb{Z}]L_\K(E)\approx_{\gr} \fGr[\mathbb{Z}] L_\K(F),$ we can only establish the Theorem~\ref{mainthfer} for the case of essential graphs (i.e., no sinks and sources). 
    \end{rmk}

We thus get the following corollary, providing further evidence that the graded $K$-group (and the talented monoid) captures substantial information about the algebra structure.

\begin{cor}\label{hfgfhfdd}
Let $E$ and $F$ be finite graphs with no sources and sinks. If there is an order-preserving $\mathbb Z[x,x^{-1}]$-module isomorphism $K_0^{\gr}(L_\K(E)) \cong K_0^{\gr}(L_\K(F))$,  then 
\begin{enumerate}[\upshape(1)]
\item There is a one-to-one correspondence between finite dimensional graded irreducible representations of $L_\K(E)$ and $L_\K(F)$.

\item There is a one-to-one correspondence between finite dimensional non-graded irreducible representations of $L_\K(E)$ and $L_\K(F)$.

\item There is a one-to-one correspondence between finite dimensional (graded) representations of $L_\K(E)$ and $L_\K(F)$.

\item There is a one-to-one correspondence between locally finite graded representations of $L_\K(E)$ and $L_\K(F)$.

\end{enumerate}

\end{cor}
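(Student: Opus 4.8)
The plan is to deduce Corollary~\ref{hfgfhfdd} directly from Theorem~\ref{mainthfer} using only the formal properties of categorical equivalences. The single principle I would invoke is that any equivalence of categories $\mathcal F\colon \mathcal A\to\mathcal B$ induces a bijection between isomorphism classes of objects of $\mathcal A$ and of $\mathcal B$, and that, between abelian categories, it carries simple (irreducible) objects to simple objects, since simplicity---the absence of a nonzero proper subobject---is preserved by any equivalence. All of $\fModd L_\K(E)$, $\fGr L_\K(E)$ and $\fGr[\mathbb{Z}]L_\K(E)$ are abelian, so this principle applies.

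Granting this, each item is immediate. Theorem~\ref{mainthfer} supplies the equivalences $\fModd L_\K(E)\approx \fModd L_\K(F)$ and $\fGr L_\K(E)\approx_{\gr}\fGr L_\K(F)$; passing to isomorphism classes of objects gives the non-graded and graded correspondences asserted in item~(3). The equivalence $\fGr[\mathbb{Z}]L_\K(E)\approx_{\gr}\fGr[\mathbb{Z}]L_\K(F)$ likewise yields item~(4). For the irreducible statements I would use preservation of simple objects: the graded equivalence of $\fGr$-categories sends graded-simple objects to graded-simple objects, giving the bijection of item~(1), and the ungraded equivalence of $\fModd$-categories sends simple modules to simple modules, giving item~(2).

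Since the corollary is a soft consequence of the main theorem, there is no genuine obstacle; the only point deserving care---and the one I would state explicitly---is that in the graded setting ``irreducible'' must mean \emph{graded}-irreducible, i.e.\ having no nonzero proper graded submodule, which is precisely simplicity as an object of $\fGr L_\K(E)$. Because the equivalences of Theorem~\ref{mainthfer} are graded (they commute with the shift functors $\mathcal T_k$ by Proposition~\ref{mainlemma1}), graded-simplicity is transported faithfully, so no mismatch can occur between the two notions of irreducibility. With this clarification the four correspondences follow at once, and no further argument is required.
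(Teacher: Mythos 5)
Your proof is correct and takes essentially the same route as the paper, which states Corollary~\ref{hfgfhfdd} without a separate argument as an immediate consequence of the categorical equivalences of Theorem~\ref{mainthfer}; your observation that equivalences induce bijections on isomorphism classes and, between abelian categories such as $\fModd L_\K(E)$, $\fGr L_\K(E)$ and $\fGr[\mathbb{Z}]L_\K(E)$, carry simple objects to simple objects is exactly the soft reasoning being invoked. Your explicit clarification that ``irreducible'' in the graded items means graded-simple (no nonzero proper graded submodule) is a sensible precaution and is consistent with the paper's usage.
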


\section*{Acknowledgement}

Hazrat acknowledges Australian Research Council grant DP230103184.


\begin{thebibliography}{9999}

  
\bibitem{lpabook} G. Abrams, P. Ara, M. Siles Molina, Leavitt path algebras, Lecture Notes in Mathematics {\bf 2191}, Springer, 2017.

\bibitem{abconj} G. Abrams, E. Ruiz, M. Tomforde, \emph{Recasting the Hazrat Conjecture: Relating Shift Equivalence to Graded Morita Equivalence}, 	
Algebr. Represent. Theory, \textbf{27} (2024), 1477--1511.



\bibitem{arapardo} P. Ara, E. Pardo, \emph{Towards a K-theoretic characterization of graded isomorphisms between Leavitt path algebras}, J. K-Theory {\bf 14} (2014), 
203--245. 


\bibitem{ara2006} P. Ara, M.A. Moreno, E. Pardo, \emph{Nonstable K-theory for graph algebras}, Algebras and Representation Theory
$\mathbf{10}$ (2007), 157--178.


\bibitem{arali} P. Ara, R. Hazrat, H. Li, A. Sims, \emph{Graded Steinberg algebras and their representations}, Algebra \& Number Theory {\bf 12} (2018), no. 1, 131--172.



\bibitem{guido} G.  Arnone, \emph{Lifting morphisms between graded Grothendieck groups of Leavitt path algebras}, J. of Algebra, \textbf{631} (2023), 804--829.



\bibitem{guidohom} G.  Arnone, \emph{Graded homotopy classification of Leavitt path algebras}, 	arXiv:2309.06312 [math.KT].

\bibitem{guidowillie} G. Arnone, G. Corti\~nas, \emph{Graded K-theory and Leavitt path algebras}, 
J. Algebraic Combin. {\bf 58} (2023), no. 2, 399--434.

\bibitem{bilich} B. Bilich, A. Dor-On, E. Ruiz, \emph{Equivariant homotopy classification of graph C* algebras}, 	arXiv:2408.09740 [math.OA]


\bibitem{bergman74} G.M. Bergman, \emph{Coproducts and some universal ring constructions}, Trans. Amer. Math. Soc. {\bf 200} (1974) 33--88.

\bibitem{alfi2} W. Bock, A. Sebandal,  \emph{An adjacency matrix perspective of talented monoids and Leavitt path
algebras,} Linear Algebra and Its Applications, \textbf{678} (2023), 295-316.



\bibitem{alfi} W. Bock, A. Sebandal, J. Vilela,  \emph{A talented monoid view on Lie bracket algebras over Leavitt path algebras,} Journal of Algebra and Its Applications, (2022) no. 8.
















\bibitem{carlsen} T.M. Carlsen, A. Dor-On, S. Eilers, \emph{Shift equivalences through the lens of Cuntz-Krieger algebras}, 
Analysis and PDE \textbf{17} (2024) 345--377.

\bibitem{Luiz}  L. Cordeiro, D. Gon\c{c}alves, R. Hazrat, \emph{The talented monoid of a directed graph with applications to graph algebras}, 
Revista Matem\'atica Iberoamericana, {\bf 38} (2022), no. 1, 223--256.


\bibitem{willie}  G. Corti\~nas, R. Hazrat, \emph{Classification conjectures for Leavitt path algebras},  to appear in Bulletin of LMS,  arXiv:2401.04262v2.
 

\bibitem{eilers} S. Eilers, E. Ruiz, A. Sims, \emph{Amplified graph $C^*$-algebras II: reconstruction}, (2021) arXiv:2007.00853 [math.OA]

\bibitem{green} E.L. Green, \emph{Graphs with relations, coverings and group-graded algebras}, Trans. Amer. Math. Soc. {\bf 279} (1983), no. 1, 297--310.

\bibitem{greenmarco} E.L. Green, E. Marcos, \emph{Graded quotients of path algebras: a local theory}, J. Pure Appl. Algebra {\bf 93} (1994), no. 2, 
195--226.


\bibitem{mathann} R. Hazrat, \emph{The graded Grothendieck group and classification of Leavitt path algebras},  Math. Ann. {\bf 355} (2013)  273--325. 

\bibitem{hazmark} R. Hazrat, \emph{The graded Grothendieck group as a classification tool for algebras, current status}, (2014) https://marktomforde.com/graph-algebra-problems/Hazrat-Questions.pdf 


\bibitem{hazdyn} R. Hazrat, \emph{The dynamics of Leavitt path algebras}, J. Algebra {\bf 384} (2013), 242--266. 

\bibitem{hazi} R. Hazrat,  Graded rings and graded Grothendieck groups, volume 435 of London Mathematical Society Lecture
Note Series. Cambridge University Press, Cambridge, 2016.


\bibitem{hazli} R. Hazrat, H. Li, \emph{The talented monoid of a Leavitt path algebra}, Journal of Algebra {\bf 547} (2020) 430--455.

\bibitem{lia} R. Hazrat, L. Vas,  \emph{Comparability in the graph monoid}, New York J. of Math, {\bf 26} (2020), 1375--1421. 


 
  
 \bibitem{koc1} A. Ko\c{c},  M.  \"{O}zayd\i n, \emph{Representations of Leavitt path algebras,} J. Pure Appl. Algebra {\bf 224} (2020), no. 3, 1297--1319.   
  
  \bibitem{koc2} A. Ko\c{c},  M. \"{O}zayd\i n, \emph{Finite-dimensional representations of Leavitt path algebras},  Forum Math. {\bf 30} (2018), no. 4, 915--928. 
  
\bibitem{ranga} K.M. Rangaswamy, \emph{Leavitt path algebras with finitely presented irreducible representations}, Journal of Algebra {\bf 447} (2016) 624--648. 


\bibitem{vas} L. Vas, \emph{The functor $K_0^{\gr}$ is full and only weakly faithful}, Algebras and Representation Theory {\bf 26} (2023), 2877--2890.




 \end{thebibliography}
 \end{document}